\newcommand{\Href}[2]{\hyperref[#2]{#1~\ref{#2}}}
\newtheorem{thm}{Theorem}[section]
\newtheorem{lemma}{Lemma}[section]
\newtheorem{conj}{Conjecture}
\newtheorem{cor}{Corollary}[section]
\theoremstyle{definition}
\newtheorem{definition}{Definition}
\theoremstyle{remark}
\newtheorem{remark}{Remark}
\newtheorem{claim}{Claim}
\newtheoremstyle{theoremtr}
  {\topsep}
  {\topsep}
  {}
  {0pt}
  {\bfseries}
  {. }
  { }
  {\thmname{#1}\thmnumber{ #2}\thmnote{ (#3)}}
\theoremstyle{theoremtr}
\newtheorem{transformation}{Transformation}
\numberwithin{equation}{section}
\newcommand{\vol}[1]{\operatorname{vol}\nolimits_{#1}}%
\newcommand{\R}{\mathbb{R}}
\newcommand{\N}{\mathbb{N}}
\newcommand{\iprod}[2]{\left\langle#1,#2\right\rangle}
\newcommand{\cube}{\Box}%
\def\co{\mathop{\rm co}}
\def\span{\mathop{\rm span}}
\def\area{\mathop{\rm Area}}
\newcommand\invisiblesection[1]{%
  \refstepcounter{section}%
  \addcontentsline{toc}{section}{\protect\numberline{\thesection}#1}%
  \sectionmark{#1}}
\title{On the volume of sections of the cube}
\author{Grigory Ivanov\address{G.I.: Inst. of Discrete Mathematics and 
Geometry, TU Wien, Vienna}
\and
Igor Tsiutsiurupa \address{I.T.: Moscow Institute of Physics and Technology,  Institutskii pereulok 9, Dolgoprudny, Moscow
region, 141700, Russia}
}
\subjclass[2010]{Primary 15A45; Secondary  52A38, 49Q20, 52A40}
\keywords{tight frame, section of cube, volume, Ball's inequality}
\begin{document}

\begin{abstract}
We study the properties of the maximal volume $k$-dimensional sections of the $n$-dimensional cube $[-1,1]^n$. We obtain a first order necessary condition  for a $k$-dimensional subspace to be a local maximizer of the volume of such sections, which we formulate in a geometric way.  We estimate  the length of the projection of a vector of the standard basis of $\R^n$ onto a $k$-dimensional subspace that maximizes the volume of the intersection. We find the optimal upper bound on the volume of a  planar section of the cube $[-1,1]^n,$ $n \geq 2.$
\end{abstract}

\maketitle

\bigskip

{\bf Mathematics Subject Classification (2010)}:  52A38, 49Q20, 52A40, 15A45

\bigskip

{\bf Keywords}: tight frame, section of cube, volume, Ball's inequality

\section{Introduction}
The problem of volume extrema of the intersection of the standard $n$-dimensional cube $\cube^n = [-1,1]^n$ with a $k$-dimensional linear subspace $H$ has been studied intensively. The tight  lower bound 
for all $n \geq k  \geq 1$ was obtained by J.~Vaaler \cite{vaaler}, he showed that
\[
\vol{k}\cube^k\leq\vol{k}(\cube^n\cap H).
\]
A. Akopyan and R. Karasev \cite{akopyan2019lower} gave a new proof of Vaaler's inequality in terms of waists. A deep generalization  of Vaaler's result for $\ell_p^n$ balls was made by  M. Meyer and A. Pajor \cite{meyer1988sections}.
K. Ball in \cite{ballvolumes}, using his celebrated version of the Brascamb--Lieb inequality, found the following upper bounds
\begin{equation}\label{eq:cube_vol_upper_bound}
\vol{k}(\cube^n \cap H)\leq \left(\frac{n}{k}\right)^{k/2} \vol{k}\cube^k \quad\quad
\text{and} \quad\quad  \vol{k}(\cube^n \cap H)\leq  \left(\sqrt{2}\right)^{n-k} \vol{k}\cube^k.
	\end{equation}
The leftmost inequality here is tight if and only if $k|n$ (see \cite{ellips}), and the rightmost one is tight whenever $2k\geq n$. 
Thus, if $k$ does not divide $n$ and $2k<n,$ the maximal volume of a section of $\cube^n$ remains unknown. 
For the hyperplane case $k = n-1,$ the rightmost inequality in \eqref{eq:cube_vol_upper_bound} was generalized to certain product measures
which include Gaussian type measures by A. Koldobsky and H. K{\"o}nig \cite{konig2012maximal}.

In \cite{crospol}, a tight bound on the volume of a section of $\cube^n$ by a $k$-dimensional linear subspace was conjectured for all $n > k \geq 1.$
Namely, let $C_\cube(n,k) 2^k$ be the maximum volume of a  section of  $\cube^n$ 
by a $k$-dimensional subspace $H$ such that $\cube^n \cap H$ is an affine cube. 
\begin{conj}\label{conj:cube_conjecture}
	The maximal volume of a section of the cube $\cube^n$ by a $k$-dimensional subspace $H$ is attained on subspaces such that the section is an affine cube, i.e.
	\begin{equation*}
		\vol{k}(\cube^n\cap H)\leq C_{\cube}(n,k)\vol{k}\cube^k.
	\end{equation*}
\end{conj}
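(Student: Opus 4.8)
The plan is to attack Conjecture~\ref{conj:cube_conjecture} through the variational geometry of the functional $H\mapsto\vol{k}(\cube^n\cap H)$ on the Grassmannian $G(n,k)$. This functional is continuous on a compact manifold, so a maximizer $H_0$ exists; fix one. Represent $H_0$ as the image of $A^{T}$ for a $k\times n$ matrix $A$ with $AA^{T}=I_k$, so that the columns $u_i=Ae_i$ are the coordinates of the projections $P_{H_0}e_i$ and form a Parseval frame of $\R^k$ with $\sum_i u_i\otimes u_i=I_k$. Under the isometry $z\mapsto A^{T}z$ the section becomes the centrally symmetric polytope $K=\{z\in\R^k:\ |\langle u_i,z\rangle|\le 1,\ i=1,\dots,n\}$. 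The first step is to differentiate $\vol{k}(K)$ as $A$ is perturbed to a nearby frame $A+tB$ with $BA^{T}+AB^{T}=0$ (the condition that keeps the frame Parseval, i.e.\ that $H$ stays on $G(n,k)$ to first order). Each such perturbation tilts the facet hyperplanes $\{\langle u_i,z\rangle=\pm 1\}$, and the first-variation-of-volume formula expresses $\frac{d}{dt}\vol{k}(K)$ as a sum over facets of $\vol{k-1}(F_i)$ paired against the induced normal displacement, with the $+1$ and $-1$ facets contributing equally by central symmetry. Setting this derivative to zero for all admissible $B$ yields a balancing identity relating the facet volumes $\vol{k-1}(F_i)$, the facet centroids, and the frame vectors $u_i$; this is the first-order necessary condition for a local maximizer, which one then restates in coordinate-free geometric language directly on the section $\cube^n\cap H$.

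The second step is to extract structural information from this condition. The target is to show that at $H_0$ the frame is \emph{decomposable} in the manner characteristic of cube sections: the index set splits so that the binding constraints fall into at most $k$ parallel classes while the remaining $u_i$ are redundant, which is exactly the condition that $\cube^n\cap H_0$ be an affine cube. A useful lever is a quantitative bound on the lengths $\|P_{H_0}e_i\|$ at a maximizing subspace (the estimate announced in the abstract): controlling these lengths limits how far a Parseval frame can be from an orthogonal family, which should feed into the decomposition argument. A parallel, frame-analytic route is to sharpen Ball's Brascamp--Lieb estimate \eqref{eq:cube_vol_upper_bound}: equality in the Brascamp--Lieb inequality for the relevant indicator data forces precisely the decomposable structure, so a stability version of it would give the conjecture.

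The third step is to carry the program out completely in the first genuinely open range, $k=2$. A planar section is a centrally symmetric polygon, the balancing identity becomes fully explicit, and one can classify all stationary subspaces and verify that the maximum is attained on a polygon that is an affine square, thereby proving Conjecture~\ref{conj:cube_conjecture} for planar sections unconditionally and pinning down $C_{\cube}(n,2)$. This case also serves to validate the first-order machinery in a nontrivial setting.

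The main obstacle is the passage from the local (first-order) condition to a global classification for general $k$. Stationarity is shared by many subspaces, not only the ones giving affine cubes, so proving that the cube-type critical points are the \emph{global} maxima requires either a convex-relaxation miracle — a concave functional on the space of Parseval frames whose maximum is attained exactly on decomposable frames — or a genuine second-order/deformation argument that strictly improves every non-cube critical point. Both look hard: as recalled above, for $k\nmid n$ with $2k<n$ even the constant $C_{\cube}(n,k)$ has no known closed form, so a complete proof for all $n,k$ lies beyond this approach, and the realistic deliverables are the general first-order criterion, the bound on $\|P_{H_0}e_i\|$, and the full resolution of the planar case.
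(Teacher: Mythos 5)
You are proposing a program for a statement that is a \emph{conjecture}, and the paper itself does not prove it in general: what the paper actually delivers is precisely the set of partial results you list at the end as the realistic deliverables --- the first-order necessary condition (Theorem \ref{thm:cube_first_order_n_c}), the length bound for the projections of the basis vectors at a global maximizer (Theorem \ref{thm:squared_length_estimate}), and the full planar case (Theorem \ref{thm:main_result}). Your overall strategy coincides with the paper's: pass to tight (Parseval) frames, perturb, and read off facet-level balancing conditions involving facet volumes and centroids. Your honest assessment of the main obstacle --- that nothing promotes the first-order condition to a global classification for general $k$ --- is exactly why the statement remains open.

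Two places where your sketch, as written, would fall short of even the partial results. First, you perturb only tangentially to the Grassmannian (requiring $BA^{T}+AB^{T}=0$); the paper instead perturbs to an \emph{arbitrary} frame $\tilde S$ and renormalizes by $A_{\tilde S}^{-1/2}$ (Lemma \ref{lem:n_s_cond_cube}). This is not cosmetic: the renormalization trick admits non-infinitesimal comparisons, and the bounds $k/(n+k)\le |v|^2\le k/(n-k)$ are obtained by a discrete substitution (replace the shortest frame vector by a second copy of $v$, observe that the section can only grow, and control $\det A_{\tilde S}$ via Lemma \ref{lem:determinant_for_substitution}); they do not follow from first-order stationarity, and your proposal names the bound without a mechanism to prove it. Second, your plan for $k=2$ --- ``classify all stationary subspaces and verify the maximum is an affine square'' --- is both stronger than needed and not how the planar case is settled. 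Stationarity only yields that the section is a cyclic polygon whose edges are met at their midpoints by the lines through the origin and the corresponding frame vectors; to exclude polygons with more than four edges one must inject global information: sharpened planar length bounds (Lemma \ref{lem:squared_lengths_best_bounds}, which themselves invoke Ball's inequality in dimensions $n\pm 1$), a discrete isoperimetric inequality for cyclic polygons, and a finite case analysis for $n\le 7$, with $n\in\{3,4,6\}$ quoted from earlier literature. A purely local classification of planar critical points is neither carried out in the paper nor obviously feasible.
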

It is not hard to show that
	\begin{equation}\label{eq:optimal_constant}
		C^2_{\cube}(n,k)=\left\lceil\frac{n}{k}\right\rceil^{n-k\lfloor n/k\rfloor}\left\lfloor\frac{n}{k}\right\rfloor^{k-(n-k\lfloor n/k\rfloor)}.
	\end{equation}
We give a complete description of the set of $k$-dimensional subspaces of $\R^n$ on which $C_{\cube}(n,k)$ is attained and satisfies identity \eqref{eq:optimal_constant} in Lemma \ref{lem:plane_extremizers_cube}.

In this paper, we continue our study of maximizers of
\begin{equation}\label{eq:main_pr_original}
		G(H)=\vol{k}(\cube^n\cap H),    \quad H \in \operatorname{Gr}(n,k) 
		\text{ with }  n \geq k \geq 2.
\end{equation}
Using the approach of \cite{zonotop}, which is described in detail below,
we get a geometric first order necessary condition  for $H$ to be a local maximizer 
of \eqref{eq:main_pr_original}.
\begin{thm}\label{thm:cube_first_order_n_c}
Let $H$ be a local maximizer of \eqref{eq:main_pr_original}, 
$v_i$ be the projection of the $i$-th vector of the standard basis of $\R^n$
onto $H,$ $i \in [n].$ Denote $P= \cube^n \cap H$, we understand $P$ as a $k$-dimensional polytope in $H.$ Then
\begin{enumerate}
	\item\label{item:first_order_section} $P = \bigcap\limits_{[n]} \left\{x\in H: \lvert\langle x, v_i\rangle\rvert \leq1\right\}.$
	\item\label{item:first_order_facet} For every $i \in [n],$ $v_i \neq 0$ and the intersection of $P$ with  the  hyperplane $\{\iprod{x}{v_i} = 1\}$  is a facet of $P.$
	\item\label{item:first_order_centroid} For every $i \in [n],$   the line $\span \{v_i\}$ intersects the boundary of $P$ in the centroid of a facet of $P.$ 
	\item\label{item:first_order_volumes} Let $F$ be a facet of $P.$ Denote $P_F = \co \{0, F\}.$ Then 
		\[
		\frac{\vol{k} P_F}{\vol{k} P} = \frac{1}{2} \frac{\sum_{\star} |v_i|^2}{k},
		\]
		where the summation is over all indices $i \in [n]$ such that the line $\span\{v_i\}$ intersects $F$ in its centroid.
\end{enumerate}
\end{thm}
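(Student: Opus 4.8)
The plan is to pass to an intrinsic ``isometric model''. Fix an isometry $U\colon\R^k\to H$; then $P=\cube^n\cap H$ becomes $Q(v):=\{y\in\R^k:\ |\iprod{y}{v_i}|\le1,\ i\in[n]\}$, where $v_i:=U^{-1}P_He_i$ is the $i$-th projection read in the model. The $n$-tuple $(v_1,\dots,v_n)$ is a tight frame, $\sum_i v_iv_i^\ast=I_k$; writing $w_i\in\R^{n-k}$ for the transverse part of $e_i$ (so $|v_i|^2+|w_i|^2=1$) one also has $\sum_i w_iw_i^\ast=I_{n-k}$ and, crucially, $\sum_i v_iw_i^\ast=0$. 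Maximizing $G$ over $\operatorname{Gr}(n,k)$ is then the same as maximizing $\vol{k}Q(v)$ over tight frames, and part \eqref{item:first_order_section} is merely the definition of $Q(v)$.

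For part \eqref{item:first_order_facet} I argue by contradiction. Suppose for some $i_0$ the slab $|\iprod{y}{v_{i_0}}|\le1$ is redundant for $Q(v)$ (this covers $v_{i_0}=0$). Then one can redistribute the frame weight carried by $v_{i_0}$: keeping the frame tight, move $v_{i_0}\mapsto v_{i_0}+t\xi$ with $\xi=A_0^{-1}v_{i_0}$ (or any $\xi\ne0$ if $v_{i_0}=0$), where $A_0:=I_k-v_{i_0}v_{i_0}^\ast=\sum_{j\ne i_0}v_jv_j^\ast$, and compensate the other $v_j$ by $v_j\mapsto\bigl(A_0-t(v_{i_0}\xi^\ast+\xi v_{i_0}^\ast)-t^2\xi\xi^\ast\bigr)^{1/2}A_0^{-1/2}v_j$. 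For small $t$ the $i_0$-th slab stays redundant, so $Q$ changes only by the invertible linear image induced by the rescaling, and computing the resulting Jacobian shows $\vol{k}Q$ strictly increases; this contradicts local maximality. Hence every slab is facet-defining and each $v_i\ne0$.

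For parts \eqref{item:first_order_centroid} and \eqref{item:first_order_volumes} I compute the first variation of $\vol{k}(\cube^n\cap H)$ along a curve in $\operatorname{Gr}(n,k)$. Encoding the perturbation by a linear map $N\colon H^\perp\to H$, in the model the frame moves as $\dot v_i=Nw_i$. Since $\vol{k}Q$ need not be differentiable at an extremizer (several $v_i$ may coincide, as they do at the conjectured maximizers), one works with one-sided derivatives. Displacing the facet hyperplanes of $Q$ and accounting for a facet $F$ with primitive outer normal $\nu_F$ being cut out simultaneously by all indices of $J(F):=\{(i,\varepsilon):\varepsilon v_i=\nu_F\}$, one obtains
\[
\left.\tfrac{d}{dt}\right|_{t=0^+}\vol{k}\bigl(\cube^n\cap H_N(t)\bigr)=-2\,\Psi(N),\qquad
\Psi(N):=\sum_{F}\frac1{|\nu_F|}\int_{F}\ \max_{(i,\varepsilon)\in J(F)}\ \varepsilon\iprod{Nw_i}{y}\ d\mathcal H^{k-1}(y),
\]
the sum over all facets $F$ of $P$. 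Local maximality forces $\Psi(N)\ge0$ for every $N$; since $\Psi$ is sublinear this is equivalent to $0\in\partial\Psi(0)$. Unpacking the latter produces, for each facet, a measurable partition of unity distributing the constraint among the indices of $J(F)$, whose first moments balance: there are nonnegative densities $\mu_i$ on $F_i:=P\cap\{\iprod{x}{v_i}=1\}$ with $\sum_i\frac1{|v_i|}\bigl(\int_{F_i}\mu_i(y)\,y\,d\mathcal H^{k-1}(y)\bigr)w_i^\ast=0$. Because $\sum_i v_iw_i^\ast=0$, the kernel of $(z_i)\mapsto\sum_i z_iw_i^\ast$ is exactly $\{(Bv_i)_i:B\in\R^{k\times k}\}$, so $\int_{F_i}\mu_i(y)\,y\,d\mathcal H^{k-1}(y)=|v_i|Bv_i$ for one common matrix $B$; integrating the partition-of-unity identities and using $\iprod{y}{v_i}=1$ on $F_i$ then pins down the masses and shows that the centroid $g_F$ of each facet $F$ equals $\tfrac{|S(F)|\,|v|}{2\vol{k}F}\,Bv$, with $v$ any frame vector satisfying $\pm v=\nu_F$.

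It remains to identify $B$. Feeding the previous relation into the divergence-theorem identity $\int_{\partial P}y\,n_P(y)^\ast\,d\mathcal H^{k-1}(y)=\vol{k}P\cdot I_k$ (summing over $\pm F$ pairs, reindexing by $i$) and using $\sum_i v_iv_i^\ast=I_k$ forces $B=\vol{k}P\cdot I_k$. Then $g_{F_i}$ is parallel to $v_i$, and $\iprod{y}{v_i}=1$ there identifies it as $v_i/|v_i|^2$, giving part \eqref{item:first_order_centroid}; reading $\vol{k}F$ off $\iprod{g_F}{\nu_F}=1$ and combining with the pyramid formula $\vol{k}P_F=\vol{k}F/(k|\nu_F|)$ yields the ratio in part \eqref{item:first_order_volumes}. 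I expect the main obstacle to be precisely the non-differentiability of $H\mapsto\vol{k}(\cube^n\cap H)$ at the extremizers: one must track carefully which coordinates are responsible for which facet of $P$, handle the one-sided first variation through the maxima inside $\Psi$, and control $\partial\Psi(0)$ sharply enough to produce the exact balancing identity rather than a mere inequality.
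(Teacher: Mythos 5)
Your proposal reaches the correct identities --- I checked that your balancing condition, fed into the divergence--theorem identity $\int_{\partial P} y\, n(y)^{T}\, d\mathcal H^{k-1}=\vol{k}P\cdot I_k$ together with $\sum_i v_i\otimes v_i=I_k$, does force $B=\vol{k}P\cdot I_k$ and reproduces the ratio in part~\eqref{item:first_order_volumes} --- but for parts \eqref{item:first_order_centroid} and \eqref{item:first_order_volumes} it takes a genuinely different and much heavier route than the paper. The paper never confronts the non-differentiability you (rightly) identify as the main obstacle: its key device is the $F$-substitution, in which \emph{all} frame vectors corresponding to a given facet $F$ are translated by the same vector $\pm tu$ simultaneously, so the combinatorial structure of $\cube(S)$ is preserved and $\vol{k}\cube(\tilde S)$ is a \emph{smooth} function of $t$ near $0$ (Corollary~\ref{cor:perturbations_explained}). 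It then needs only two elementary one-parameter perturbations --- a parallel shift of $F$ (direction $u=v$), which via \eqref{eq:vol_normal_variation} and Lemma~\ref{lem:determinant_for_substitution} gives $\tfrac{2}{|v|}\vol{k-1}F_v=d|v|^2\vol{k}\cube(S)$ and hence part~\eqref{item:first_order_volumes}, and a rotation of $F$ about a codimension-two axis through $\span\{v\}\cap H_v$ (direction $u\perp v$), which via \eqref{eq:operation_polytope_2_final_destination} and Lemma~\ref{lem:der_det_identity} gives $\iprod{c-c_v}{u}=0$ and hence part~\eqref{item:first_order_centroid} --- each read off by matching coefficients of $t$ on the two sides of \eqref{eq:criterion_inequality_cube}. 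Your route admits arbitrary tangent directions $N$ of the Grassmannian, accepts the one-sided derivative with the $\max$ over $J(F)$, and recovers the balancing identity from $0\in\partial\Psi(0)$ via a measurable partition of unity; this extracts the full first-order information at once, but the price is that the one-sided first-variation formula for coinciding facet normals and the interchange of subdifferential and integral are exactly where all the real work would sit, and neither is proved in your sketch. Three smaller points: $\vol{k}F$ in your centroid formula should be $\vol{k-1}F$; with the factor $2$ in front of $\Psi$ the sum should run over antipodal pairs of facets rather than all facets; and in part~\eqref{item:first_order_facet} redundancy of a slab is \emph{not} an open condition (the hyperplane may touch $P$ in a face of dimension at most $k-2$ and begin cutting after perturbation), so you must add the observation that the volume it then removes is $o(t)$ --- the paper sidesteps this by first proving, via polarity, that each $v_i$ is a vertex of $\co\{\pm S\}$ in Lemma~\ref{lem:hyperplane_meets_facet}.
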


One of the arguments used by K. Ball to prove the rightmost inequality in \eqref{eq:cube_vol_upper_bound} is that the projection of a vector of the standard basis onto a maximizer of \eqref{eq:main_pr_original} for 
$2k \geq n$ has length at least $\sqrt{2}.$ 
We prove the following extension of this result.
\begin{thm}\label{thm:squared_length_estimate}
	Let $n > k \geq 1$ and $H$ be a global maximizer of \eqref{eq:main_pr_original},
	$v$ be the projection of a vector of the standard basis of $\R^n$
onto $H.$   Then
		\begin{equation*}
			\frac{k}{n+k}\leq |v|^2 \leq\frac{k}{n-k}.
		\end{equation*}
\end{thm}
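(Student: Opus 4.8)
The plan is to exploit the first-order necessary condition of Theorem 1.1, which applies since a global maximizer is in particular a local one. Fix $H$ a global maximizer, write $P = \cube^n \cap H$, and let $v_1,\dots,v_n$ be the projections of the standard basis vectors onto $H$; by Theorem 1.1(2) each $v_i \neq 0$ and $F_i := P \cap \{\iprod{x}{v_i}=1\}$ is a facet of $P$, and by Theorem 1.1(1) these are exactly the facets of $P$. The key structural identity is the tight-frame-type relation $\sum_{i=1}^n v_i \otimes v_i = I_H$ (the projection of the identity onto $H$), which gives $\sum_{i=1}^n |v_i|^2 = k$ and, crucially, $\sum_{i=1}^n \iprod{v_i}{u}^2 = |u|^2$ for every $u \in H$. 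I will combine this with the cone decomposition $P = \bigcup_i P_{F_i}$ where $P_{F_i} = \co\{0,F_i\}$ (valid because $0$ is interior to $P$ and the $F_i$ exhaust the facets), so $\vol{k} P = \sum_i \vol{k} P_{F_i}$, and then invoke the ratio formula Theorem 1.1(4).

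For the upper bound $|v|^2 \le k/(n-k)$: say $v = v_1$ without loss of generality. From Theorem 1.1(4), for each facet $F_j$ we have $\vol{k} P_{F_j}/\vol{k}P = \frac{1}{2k}\sum_{i : \span\{v_i\}\text{ meets } F_j \text{ centrally}} |v_i|^2$. Summing over all facets and using that each index $i$ contributes to at least one facet (its own, $F_i$), the total is $\ge \frac{1}{2k}\sum_{i=1}^n |v_i|^2 = \frac{1}{2}$; since the cones tile $P$ the sum of the left-hand sides is exactly $1$, so in fact equality forces each index $i$ to contribute to \emph{exactly} one facet, and that facet must be $F_i$ itself. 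Now isolate the cone $P_{F_1}$: its volume ratio is $\frac{1}{2k}\sum_{i \in S_1}|v_i|^2$ where $S_1 \ni 1$ is the set of indices whose line hits $F_1$ centrally. On the other hand $\vol{k}P_{F_1}/\vol{k}P = \frac{h_1}{k}\cdot \frac{\vol{k-1}F_1}{\vol{k}P}\cdot(\text{something})$ — more usefully, compare with the perturbation that multiplies the $v_1$-direction: the maximality of $\vol{k}P$ under the constraint $\sum v_i\otimes v_i = I_H$ is what pins things down. The cleaner route: the centroid condition Theorem 1.1(3) says $\span\{v_1\}$ meets $\partial P$ at the centroid of $F_1$, i.e. the centroid of $F_1$ equals $v_1/|v_1|^2$; a standard identity for the cone over a facet through a point on the axis then gives $\vol{k}P_{F_1} = \frac{1}{k}\,\vol{k-1}F_1 \cdot \frac{1}{|v_1|}$, while $|v_1|^2 = \iprod{c_{F_1}}{v_1}^{-1}\cdots$. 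The main obstacle is getting this single-cone volume into a form where the tight-frame identity $\sum |v_i|^2 = k$ can be applied as a bound, and I expect the right manipulation is: $\frac{1}{2k}|v_1|^2 \le \frac{1}{2k}\sum_{i\in S_1}|v_i|^2 = \frac{\vol{k}P_{F_1}}{\vol{k}P}$, and simultaneously, because the remaining $n-1$ cones must share a volume ratio summing to $1 - \frac{1}{2k}\sum_{i\in S_1}|v_i|^2$ while being built from the vectors $\{v_i\}_{i\ne 1}$ whose squared norms sum to $k-|v_1|^2$, an averaging/convexity argument forces $\frac{\vol{k}P_{F_1}}{\vol{k}P} \ge \frac{|v_1|^2}{n-k+|v_1|^2}$ or similar, and combining the two inequalities on this ratio yields $|v_1|^2 \le k/(n-k)$.

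For the lower bound $|v|^2 \ge k/(n+k)$: here I would use a genuine variational argument rather than just the first-order condition. Consider the subspace $H$ and the competitor obtained by rotating slightly so as to shrink $v_1$ while keeping the tight-frame constraint; alternatively, replace the single vector $v_1$ by two parallel copies and renormalize, exploiting that $\cube^{n+1}\cap H'$ (for a suitable $H' \in \operatorname{Gr}(n+1,k)$ obtained by "splitting" the first coordinate) can only have larger or equal maximal volume, combined with Ball's upper bound $(\sqrt 2)^{(n+1)-k}$ type control. More directly: the section $P$ satisfies $P = \bigcap_i\{|\iprod{x}{v_i}|\le 1\}$, and dropping the pair of constraints coming from a near-parallel decomposition of $v_1$ and re-optimizing gives a comparison between $G(H)$ on $\operatorname{Gr}(n,k)$ and on $\operatorname{Gr}(n-1,k)$; quantifying how much volume the slab $\{|\iprod{x}{v_1}|\le 1\}$ can be "inflated" when $|v_1|$ is small, against the fact that $H$ is optimal, produces the bound $|v_1|^2 \ge k/(n+k)$. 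The hardest part overall will be making the single-cone volume estimates rigorous and choosing the perturbation for the lower bound so that it stays inside the relevant Grassmannian and the constraint $\sum v_i\otimes v_i = I$ is preserved to first order; I expect the symmetry $n \mapsto$ "number of facets" and careful bookkeeping of which lines hit which facets (forced to be a perfect matching by the equality analysis above) to be the technical heart of the argument.
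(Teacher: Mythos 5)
Your write-up is a plan, not a proof: both bounds terminate in ``I expect the right manipulation is\dots'' and ``produces the bound,'' and the two steps you flag as the crux are exactly the ones that do not go through. The deeper problem is strategic. The conditions of Theorem~\ref{thm:cube_first_order_n_c} are first-order \emph{stationarity} identities valid at every local maximizer; they are equalities, and after your cone-decomposition bookkeeping (each $\span\{v_i\}$ hits the centroid of exactly one antipodal facet pair, the cone ratios sum to $1$, and $\sum_i|v_i|^2=k$) they collapse to $1=1$ and yield no control on an individual $|v_i|^2$. The two-sided bound $\tfrac{k}{n+k}\le|v|^2\le\tfrac{k}{n-k}$ genuinely uses \emph{global} maximality, i.e.\ a comparison of $H$ against distant competitors, and your intermediate inequality $\vol{k}P_{F_1}/\vol{k}P\ \ge\ |v_1|^2/(n-k+|v_1|^2)$ is asserted with no argument; I do not see how any averaging over the remaining cones produces it, since those cones are constrained only through the polytope geometry, not through the norms of the remaining vectors. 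For the lower bound you correctly sense that a genuine variational comparison is needed, but none of the three alternatives you list is carried out, and the one involving $\operatorname{Gr}(n\pm1,k)$ requires knowing the maximal volume in the neighboring dimension (the paper only uses that device in Lemma~\ref{lem:squared_lengths_best_bounds}, for $k=2$, where Ball's bound is available to close the loop).

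The missing idea is the substitution argument of Theorem~\ref{thm:squared_length_estimate_frame}. Since $\sum_{u\in S}|u|^2=k$, there is a short vector $u$ with $|u|^2\le k/n$ and a long vector $w$ with $|w|^2\ge k/n$. For the upper bound, replace $u$ by a second copy of $v$: the new $n$-tuple $\tilde S$ is still a frame, and $\cube(S)\subset\cube(\tilde S)$ because the slab for $v$ is already one of the defining slabs of $\cube(S)$; hence the volume does not decrease, and Lemma~\ref{lem:n_s_cond_cube} forces $\det A_{\tilde S}\le1$. Lemma~\ref{lem:determinant_for_substitution} evaluates this determinant as $(1-|u|^2)\bigl(1+|B_{S\setminus u}v|^2\bigr)$, and Lemma~\ref{lem:geom_sense_B_S_v} gives $|B_{S\setminus u}v|\ge|v|$, so $|v|^2\le|u|^2/(1-|u|^2)\le k/(n-k)$. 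The lower bound is the mirror image: replace $v$ by $w$ and read off $|v|^2\ge|w|^2/(1+|w|^2)\ge k/(n+k)$. This is a three-line global comparison entirely bypassing Theorem~\ref{thm:cube_first_order_n_c}; to salvage your approach you would need to replace the first-order machinery by some such exact comparison, since no amount of stationarity analysis alone can separate $n-k$ from $n+k$.
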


Using these results and some additional geometric observations, we prove the following. 

\begin{thm}\label{thm:main_result}
	\Href{Conjecture}{conj:cube_conjecture} is true for $n > k=2.$ That is, for any two-dimensional subspace $H \subset \R^n$ the following inequality holds
	\begin{equation*}
		\area(\cube^n\cap H)\leq C_{\cube}(n,2)\vol{2}\cube^2 = 4\sqrt{\left\lceil\frac{n}{2}\right\rceil\left\lfloor\frac{n}{2}\right\rfloor}.
	\end{equation*}
	This bound is optimal and is attained if and only if $\cube^n \cap H$ 
	is a rectangle with  the sides  of lengths 
	$2\sqrt{\left\lceil\frac{n}{2}\right\rceil}$ and 
	$2\sqrt{\left\lfloor\frac{n}{2}\right\rfloor}$.
\end{thm}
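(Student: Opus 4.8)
The plan is to fix a global maximizer $H\in\operatorname{Gr}(n,2)$ and extract as much structure as possible from Theorems~\ref{thm:cube_first_order_n_c} and~\ref{thm:squared_length_estimate}, then show that the only planar polygon $P=\cube^n\cap H$ compatible with all of these constraints is the claimed rectangle. First I would record what the first order condition says in the plane: writing $v_1,\dots,v_n\in H\cong\R^2$ for the projections of the standard basis vectors, part~\eqref{item:first_order_section} says $P=\bigcap_i\{|\iprod{x}{v_i}|\le 1\}$, part~\eqref{item:first_order_facet} says each $\pm v_i$ supports an edge of $P$, and part~\eqref{item:first_order_centroid} says $\span\{v_i\}$ meets that edge at its midpoint. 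Since $P$ is centrally symmetric (it is a central section of $\cube^n$), a vector $v_i$ and its negative give the same edge pair, so the edges of $P$ come in at most a handful of parallel classes; group the indices $i\in[n]$ according to which edge of $P$ the ray $\span\{v_i\}$ hits. Note also the tight-frame identity $\sum_i v_i\otimes v_i = I_H$ coming from the fact that the $v_i$ are the images of an orthonormal basis under the orthogonal projection onto $H$ (this is the standard input from \cite{zonotop}); in particular $\sum_i |v_i|^2 = k = 2$.

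Next I would argue that $P$ must be a parallelogram, i.e.\ that there are exactly two parallel classes of edges. Suppose $P$ is a $2m$-gon with $m\ge 3$. For each vertex $p$ of $P$, $p$ lies on exactly two edges, hence $p$ satisfies $\iprod{p}{v_i}=1$ for the $v_i$ supporting those two edges and $|\iprod{p}{v_j}|<1$ for all other $j$; but $p$ is also a vertex of $\cube^n\cap H$, so $p$ is the intersection of $H$ with a face of $\cube^n$, which forces $p$ to have at least $n-2$ of its $\cube^n$-coordinates equal to $\pm1$, i.e.\ $|\iprod{p}{e_r}|=1$, equivalently $\iprod{p}{v_r}^2 = |v_r|^2\cdot(\text{something})$ — more precisely $p\in H$ so its $r$-th coordinate equals $\iprod{p}{v_r}$ only after identifying $H$ with its image; the clean statement is that for a vertex $p$ of the section, $\iprod{p}{v_r}\in\{-1,1\}$ for at least $n-2$ values of $r$. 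Combining "$v_r$ supports an edge of $P$ for every $r$" with "each vertex saturates all but $\le 2$ of the constraints $|\iprod{x}{v_r}|\le1$" is very restrictive: going around $P$, consecutive vertices differ in exactly which two constraints are slack, and a counting/parity argument on how the saturated index sets change forces $2m\le$ (small number). I expect one can push this to show $m=2$ directly, using that $P$ has $n$ pairs of supporting directions $\pm v_i$ but the midpoint condition \eqref{item:first_order_centroid} pins each $v_i$ to one specific edge-line; the main obstacle is making this combinatorial bookkeeping airtight rather than hand-wavy, and this is where I'd expect to spend the most effort.

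Once $P$ is known to be a parallelogram, I would finish by optimizing explicitly. Partition $[n] = A\sqcup B$ where $A$ indexes the $v_i$ parallel to one pair of edges and $B$ the other (the midpoint condition forces $v_i$ to point exactly along an edge line, and since the two edge-lines of a parallelogram span $H$, every $v_i$ lies on one of them). Say the $v_i$ for $i\in A$ all lie on a line $\ell_A$ with unit direction $a$ and similarly $b$ for $B$; then $P = \{x : |\iprod{x}{a}|\le \alpha,\ |\iprod{x}{b}|\le\beta\}$ in the oblique coordinates given by $a,b$, where $\alpha = 1/\max_{i\in A}|v_i|$ and $\beta = 1/\max_{i\in B}|v_i|$. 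The frame identity $\sum v_i\otimes v_i = I$ restricted to the two lines gives $(\sum_{i\in A}|v_i|^2)\,a\otimes a + (\sum_{i\in B}|v_i|^2)\,b\otimes b = I$, which forces $a\perp b$ (so $P$ is a rectangle!) and $\sum_{i\in A}|v_i|^2 = \sum_{i\in B}|v_i|^2 = 1$. Now part~\eqref{item:first_order_volumes} of Theorem~\ref{thm:cube_first_order_n_c} with $k=2$ says the ratio $\vol{2}P_F/\vol{2}P = \tfrac12\cdot\tfrac12\sum_{\star}|v_i|^2$; for an edge $F$ on line $\ell_A$ one has $\vol{2}P_F/\vol{2}P = \tfrac14$ automatically for a centrally symmetric polygon's "cone over an edge"? — rather, I would instead use Theorem~\ref{thm:squared_length_estimate}: each $|v_i|^2\le \tfrac{2}{n-2}$ and $|v_i|^2\ge\tfrac{2}{n+2}$, together with $\sum_{i\in A}|v_i|^2 = 1$, bounds $|A|$ and the side length $2\alpha = 2/\max_{i\in A}|v_i|$; the maximum of $\area(P) = 4\alpha\beta$ subject to these is attained when all $|v_i|$, $i\in A$, are equal (so $\max = $ the common value $1/|A|$ after rescaling, giving $\alpha = \sqrt{|A|}$), hence $\area(P) = 4\sqrt{|A||B|}$ with $|A|+|B|=n$, which is maximized at $\{|A|,|B|\} = \{\lceil n/2\rceil,\lfloor n/2\rfloor\}$, yielding exactly $C_\cube(n,2)\vol{2}\cube^2$. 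Finally I would check that this configuration is genuinely realizable as a section (it is — take $H$ spanned by $\tfrac1{\sqrt{|A|}}\sum_{i\in A}e_i$ and $\tfrac1{\sqrt{|B|}}\sum_{i\in B}e_i$) and that equality in Theorem~\ref{thm:squared_length_estimate} plus all the first order conditions pins down the section up to the symmetries of $\cube^n$, giving the stated equality characterization.
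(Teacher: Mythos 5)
There is a genuine gap, and it sits exactly at the step you flag as the hard one: showing that the maximizing section $P$ is a parallelogram (equivalently, that the number of edge pairs $f$ equals $2$). Worse, the combinatorial route you sketch for it rests on a false premise. You claim that a vertex $p$ of the planar section must have ``at least $n-2$ of its $\cube^n$-coordinates equal to $\pm1$,'' i.e.\ that each vertex saturates all but at most two of the constraints $\lvert\iprod{x}{v_r}\rvert\le1$. The dimension count goes the other way: a vertex of the two-dimensional polygon $\cube^n\cap H$ is a $0$-dimensional face of the intersection, so it lies on a face of $\cube^n$ of dimension at least $n-2$, which means at least \emph{two} (generically exactly two) coordinates equal $\pm1$ --- not $n-2$ of them. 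Since each coordinate of $p\in H$ is $\iprod{p}{v_r}$, each vertex saturates only the two constraints corresponding to its two incident edges, which is no restriction at all: planar sections of $\cube^n$ can perfectly well be $2n$-gons. So the counting/parity argument you hope will force $2m\le4$ has nothing to work with, and the first-order conditions alone (each $v_i$ supports an edge and hits its midpoint) do not rule out, say, a regular $2m$-gon with $m\mid n$, for which all these conditions hold. Some quantitative input is indispensable here.

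For contrast, the paper's proof spends essentially all of its effort on this reduction. It first upgrades the midpoint condition to the statement that $\cube(S)$ is cyclic, then combines (i) a sharpened planar version of the length estimate, $\tfrac{2}{n+1}\le|v_i|^2\le\tfrac{2}{n-1}$, obtained by comparing with Ball's bound in dimensions $n\pm1$, (ii) the resulting bound on the circumradius $R$, and (iii) the discrete isoperimetric inequality for cyclic polygons, to derive $f\tan\frac{\pi}{2f}\ge\frac{4}{n+1}\sqrt{\lceil n/2\rceil\lfloor n/2\rfloor}$. This forces $f=2$ for $n\ge8$ and leaves a finite case analysis for $n\in\{3,4,5,6,7\}$, handled via the regular-hexagon computation, the known divisibility cases of Ball's inequality, and the facet-counting identity from the first-order conditions. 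Your endgame --- once $P$ is a parallelogram, the tight-frame identity forces perpendicular edge directions with $\sum_{i\in A}|v_i|^2=\sum_{i\in B}|v_i|^2=1$, hence $\area(P)\le4\sqrt{|A||B|}\le4\sqrt{\lceil n/2\rceil\lfloor n/2\rfloor}$ --- is correct and matches the paper's Step 1 in spirit, but without a valid proof that $f=2$ the argument does not close.
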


\section{Definitions and Preliminaries}\label{sec:def_pre}
For a positive integer $n$, we refer to the set $\{1, 2, \dots,  n \}$ as $[n].$ The standard $n$-dimensional cube  $[-1,1]^n$ is denoted by $\cube^n.$   We use $\iprod{p}{x}$ to denote the \emph{standard inner product}  of vectors $p$ and  $x$ in $\R^n$. For vectors $u,v\in\R^n$, their \emph{tensor product} (or, diadic product) is the 
linear operator on $\R^n$ defined as $(u\otimes v)x=
\iprod{u}{x} v$ for every 
$x\in\R^d.$ The linear hull of a subset $S$ of $\R^n$ is denoted by $\span S$. 
For a $k$-dimensional linear subspace $H$ of $\R^n$ and a body $K \subset H,$ we denote by $\vol{k} K$ the $k$-dimensional volume of $K$. 
The two-dimensional volume of a body $K \subset \R^2$ is denoted by  $\area K.$ 
We denote the identity operator on a linear subspace $H\subset\R^n$ by $I_H$. If $H=\R^k$, we use $I_k$ for convenience.

For a non-zero vector $v \in \R^k,$ we denote by $H_v$ the affine hyperplane $\{x\in \R^k:\iprod{x}{v} = 1\}$, and by $H_v^+$ and $H_v^-$ the half-spaces $\{x\in \R^k:\iprod{x}{v} \le 1\}$  and 
$\{x\in \R^k:\iprod{x}{v} \ge -1\}$, respectively.

It is convenient to identify a section of the cube with a convex polytope in $\R^k$. Let $\{v_1, \dots, v_n\}$ be the projections of the standard basis of $\R^n$ onto $H.$ Clearly,
\begin{equation*}
	\cube^n\cap H=\bigcap_{i=1}^n\{x\in H : \lvert\iprod{x}{v_i}\rvert \leq 1\}.
\end{equation*}
That means that a section of $\cube^n$ is determined by the set of vectors $\{v_i\}_1^n\subset H$, which are the projections of the orthogonal basis. Such sets of vectors have several equivalent description and names. 

\begin{definition}
We will say that an  ordered $n$-tuple of  vectors $\{v_1, \dots, v_n\} \subset H$ is {\it   a tight frame} (or forms a tight frame) in  a vector space $H$ 
if 
\begin{equation}
\label{John_cond_3}
\left. \left(\sum\limits_1^n v_i \otimes v_i \right) \right|_H = I_H,
\end{equation}
where $I_H$ is the identity operator in $H$  and $\left. A\right|_H$ is the restriction of an operator  $A$ onto $H.$ \\
	We  use $\Omega(n,k)$ to denote the set of all tight frames with $n$ vectors in $\R^k.$
\end{definition}
\begin{definition}
	An $n$-tuple of  vectors in a linear space $H$ that spans $H$ is called a \textit{frame}.
\end{definition}

In the following trivial lemma we understand $\R^k \subset \R^n$ as the subspace of vectors, whose last $n-k$ coordinates are zero.
For convenience, we will consider $\{v_i\}_1^n \subset \R^k \subset \R^n$ as $k$-dimensional vectors. 
\begin{lemma}\label{lem:tight_frame_equiv_cond}
The following assertions
 are equivalent:
\begin{enumerate}
	\item \label{ass:eq_cond_it1} the vectors  $\{v_1, \dots, v_n\} \subset \R^k$  form a tight frame in $\R^k$; 
	\item \label{ass:eq_cond_it2} there exists an orthonormal basis $\{ f_1, \ldots, f_n \}$  of  $\R^n$ such that   $v_i$ is the orthogonal projection 
	of $f_i$ onto $\R^k,$ for any $i \in [n];$ 
	\item \label{ass:eq_cond_Gram} 
	$\span\{v_1, \ldots ,v_n\} = \R^k$ and the Gram matrix $\Gamma$ of vectors $\{v_1, \ldots ,v_n\} \subset \R^k$ is the matrix of the projection operator from $\R^n$ onto the linear hull of the rows of the matrix   $M = (v_1, \ldots, v_n).$
	\item \label{ass:eq_cond_it3}  the $k\times n$ matrix $M = (v_1, \ldots, v_n )$ is a sub-matrix of an orthogonal matrix of order $n$.
\end{enumerate} 
\end{lemma}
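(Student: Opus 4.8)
The plan is to prove the cyclic chain of implications $(\ref{ass:eq_cond_it1})\Rightarrow(\ref{ass:eq_cond_it2})\Rightarrow(\ref{ass:eq_cond_it3})\Rightarrow(\ref{ass:eq_cond_it1})$, and then separately argue the equivalence of $(\ref{ass:eq_cond_Gram})$ with the rest (most naturally by showing $(\ref{ass:eq_cond_it3})\Leftrightarrow(\ref{ass:eq_cond_Gram})$). The unifying idea is that the tight frame condition \eqref{John_cond_3} for $\{v_i\}\subset\R^k$ says exactly that $MM^{\top}=I_k$, where $M=(v_1,\dots,v_n)$ is the $k\times n$ matrix whose columns are the $v_i$; indeed $\sum_i v_i\otimes v_i = MM^{\top}$ as operators on $\R^k$, and since the $v_i$ span $\R^k$ there is no ambiguity about the restriction. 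So the whole lemma is a reformulation of the statement "$M$ has orthonormal rows $\iff$ $M$ is a $k\times n$ submatrix (row-submatrix) of an $n\times n$ orthogonal matrix $\iff$ the columns of $M$ are the projections of an orthonormal basis of $\R^n$ onto the coordinate subspace $\R^k$."

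For $(\ref{ass:eq_cond_it1})\Rightarrow(\ref{ass:eq_cond_it3})$: given $MM^{\top}=I_k$, the $k$ rows of $M$ are orthonormal vectors in $\R^n$; extend them to an orthonormal basis of $\R^n$ and stack all $n$ of these as rows to obtain an orthogonal $n\times n$ matrix $U$ whose first $k$ rows are those of $M$. Then $M$ is a submatrix of $U$. For $(\ref{ass:eq_cond_it3})\Rightarrow(\ref{ass:eq_cond_it2})$: if $M$ consists of $k$ rows of an orthogonal matrix $U$, write $f_1,\dots,f_n$ for the columns of $U$; these form an orthonormal basis of $\R^n$, and the orthogonal projection of $f_i$ onto $\R^k$ (the first $k$ coordinates) is precisely the $i$-th column of $M$, i.e.\ $v_i$. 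For $(\ref{ass:eq_cond_it2})\Rightarrow(\ref{ass:eq_cond_it1})$: if $v_i$ is the projection of $f_i$ and $P\colon\R^n\to\R^k$ denotes the orthogonal projection, then for $x\in\R^k$ we have $\sum_i \iprod{v_i}{x} v_i = \sum_i \iprod{f_i}{x} P f_i = P\big(\sum_i \iprod{f_i}{x} f_i\big) = P x = x$, using that $\{f_i\}$ is an orthonormal basis and $x\in\R^k$ is already fixed by $P$; hence \eqref{John_cond_3} holds. (One should also note that the $v_i$ span $\R^k$: this follows because $MM^{\top}=I_k$ forces $M$ to have rank $k$.)

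For the equivalence with $(\ref{ass:eq_cond_Gram})$: the Gram matrix is $\Gamma = M^{\top}M$, an $n\times n$ matrix. When $MM^{\top}=I_k$ and $\span\{v_i\}=\R^k$, the matrix $M^{\top}M$ is symmetric, idempotent ($M^{\top}M M^{\top}M = M^{\top}(MM^{\top})M = M^{\top}M$), and has rank $k$; a symmetric idempotent is the matrix of the orthogonal projection onto its range, which is the column space of $M^{\top}$, i.e.\ the span of the rows of $M$. Conversely, if $\Gamma = M^{\top}M$ is the matrix of the orthogonal projection onto the row span of $M$ and the $v_i$ span $\R^k$, then $M^{\top}M$ is idempotent of rank $k$, and restricting to $\R^k$ one recovers $MM^{\top}=I_k$ — concretely, $M M^{\top}M M^{\top} = M M^{\top}$ together with the invertibility of $MM^{\top}$ on the range of $M$ (guaranteed by the spanning hypothesis) gives $MM^{\top}=I_k$. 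I would present this using the elementary linear algebra facts about symmetric idempotents rather than coordinates.

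I do not expect a genuine obstacle here; the lemma is deliberately labelled "trivial." The only points that need care are (i) justifying that the restriction $\big(\sum_i v_i\otimes v_i\big)\big|_H$ in \eqref{John_cond_3} is unambiguous and equals $MM^{\top}$ — which is why the definition bundles the spanning condition into some of the statements and why one must separately note rank $k$ in $(\ref{ass:eq_cond_it1})$ — and (ii) keeping straight that $\Gamma=M^{\top}M$ lives in dimension $n$ while $MM^{\top}=I_k$ lives in dimension $k$, so the "projection operator" in $(\ref{ass:eq_cond_Gram})$ is the projection in $\R^n$ onto a $k$-dimensional subspace. Everything else is bookkeeping with orthonormal bases and the extension of an orthonormal system to an orthonormal basis.
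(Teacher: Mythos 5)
Your proof is correct. The paper itself offers no proof of this lemma --- it is introduced as ``trivial'' and stated without argument --- so there is nothing to compare against; your write-up is the standard verification via the identity $\sum_i v_i\otimes v_i = MM^{\top}$, reading the tight-frame condition as orthonormality of the rows of $M$, and it fills the gap completely. The only step worth spelling out a little more when writing it up is the cancellation in $(\ref{ass:eq_cond_Gram})\Rightarrow(\ref{ass:eq_cond_it1})$: from $M^{\top}MM^{\top}M=M^{\top}M$ one first uses injectivity of $M^{\top}$ (equivalent to $\operatorname{rank} M=k$, i.e.\ the spanning hypothesis) to get $MM^{\top}MM^{\top}=MM^{\top}$, and then invertibility of $MM^{\top}$ to conclude $MM^{\top}=I_k$.
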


It follows that the tight frames in $\R^k$ are exactly the projections of orthonormal bases onto $\R^k$. This observation allows us to reformulate the problems in terms of tight frames and associated polytopes in $\R^k$.
Indeed, identifying $H$ with $\R^k,$ we identify the projection of the standard basis onto $H$ with a tight frame  $\{v_1, \dots, v_n\} \subset \Omega(n,k).$ Thus, we identify $\cube^n\cap H$ with the intersection of slabs  of the form $H_{v_i}^+ \cap H_{v_i}^-,$ $i \in [n].$ Vice versa, assertion~\eqref{ass:eq_cond_Gram} gives a way to reconstruct $H$ from a given tight frame $\{v_1, \dots, v_n\}$ in $\R^k.$ 

\begin{definition}\label{def_ass_fr}
We will say that an $n$-tuple $S=\{v_1, \ldots, v_n\}\in\R^k$ generates
\begin{enumerate}
	\item the polytope 
		\begin{equation}\label{generated_section}
			\cube(S)=\bigcap\limits_{i \in [n]}\left( H_{v_i}^+ \cap H_{v_i}^-\right),
		\end{equation}
		which we call the \emph{section of the cube generated by} $S$; 
	\item the matrix  $ \sum\limits_{i \in [n]}  v_i \otimes v_i$. We use $A_S$ to denote this matrix. 
\end{enumerate}
\end{definition}

To sum up, the global extrema of \eqref{eq:main_pr_original} coincide with that of 
\begin{equation}\label{eq:main_problem_cube}
 F(S) = \vol{k}\cube(S), \text{ where }\;  S \in \Omega(n,k)
 \; \text{  with  }\;  n \geq k > 1.
\end{equation}
Moreover, it was shown in \cite{zonotop} that the local extrema of \eqref{eq:main_problem_cube} coincide with that of \eqref{eq:main_pr_original}. However, we note that 
there is an ambiguity when we identify $H$ with $\R^k.$ Any choice of orthonormal basis of $H$ gives its own tight frame in $\R^k $, all of them are isometric but different from each other. It is not a problem as there exists a one-to-one correspondence between $\operatorname{Gr} (n,k)$ and $\frac{\Omega(n,k)}{\operatorname{O}(k)},$ where $\operatorname{O}(k)$ is the orthogonal group in dimension~$k$. And, clearly, $F(S_1) = F(S_2)$ whenever $S_2 = U (S_1)$ for some 
$U \in \operatorname{O}(k).$

In the following lemma, we give a complete description of the set 
$\mathcal{H}$ of $k$-dimensional subspaces of $\R^n$ such that 
$\cube^n \cap H$ is an affine cube and 
$\vol{k} \left(\cube^n \cap H\right) = 2^k C_\cube (n,k)$ for $H \in \mathcal{H}.$
Since it was proven in \cite{crospol}, 
we provide a sketch of its proof in  \Href{Appendix}{sec:appendix}.
\begin{lemma}\label{lem:plane_extremizers_cube}
Constant $C_{\cube} (n,k)$ is given by \eqref{eq:optimal_constant} and is attained on the subspaces given by the following rule.
\begin{enumerate}
\item \label{enum_card_1} We partition $[n]$  into $k$ sets such that the cardinalities of any two sets differ by at most one.
\item \label{enum_card_2} Let  $\{i_1, \ldots, i_\ell \}$ be one of the sets of the partition. Then, choosing arbitrary signs, we write the system of linear equations
$$
	\pm x[i_1] = \ldots = \pm x[i_\ell],
$$
where $x[i]$ denotes the $i$-th coordinate of $x$ in $\R^n.$
\item \label{enum_card_3} Our subspace is the solution of the system of all  equations written for each set of the partition at  step \eqref{enum_card_2}.
\end{enumerate}
\end{lemma}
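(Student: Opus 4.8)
The plan is to translate the problem into the tight-frame picture and reduce it to an explicit determinant-maximization over the cross-polytope. Fix a $k$-dimensional subspace $H\subset\R^n$ such that $Q:=\cube^n\cap H$ is an affine cube, and let $S=\{v_1,\dots,v_n\}\in\Omega(n,k)$ be the tight frame of projections of the standard basis, so $Q=\cube(S)=\bigcap_{i}\{y:|\iprod{y}{v_i}|\le1\}$. The first---and key---step is to locate the facet normals of the box $Q$ inside the frame $S$ itself. Every facet of $Q$ lies on $\partial\cube^n$, hence (being $(k-1)$-dimensional) its affine hull is one of the hyperplanes $\{y\in H:\iprod{y}{v_i}=\pm1\}$ (necessarily with $v_i\ne0$); since $Q$ is a box its $2k$ facets form $k$ parallel pairs, so there is a $k$-subset $I=\{i_1,\dots,i_k\}\subset[n]$ with $w_l:=v_{i_l}$ linearly independent and $Q=\{y:|\iprod{y}{w_l}|\le1,\ l\in[k]\}$. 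Hence $\vol{k}Q=2^k/|\det W|$, where $W$ is the $k\times k$ matrix with rows $w_1,\dots,w_k$. Writing each $v_i=W^{\mathsf T}c_i$ (so $c_i$ is the coordinate vector of $v_i$ in the basis $\{w_l\}$, and in particular $c_{i_l}=e_l$), the inclusion $Q\subseteq\{y:|\iprod{y}{v_i}|\le1\}$ is equivalent to $\|c_i\|_1\le1$ for every $i$, and substituting $v_i=W^{\mathsf T}c_i$ into $\sum_i v_i\otimes v_i=I_k$ gives $W^{\mathsf T}CW=I_k$ with $C:=\sum_i c_i\otimes c_i$, i.e. $C=(WW^{\mathsf T})^{-1}$. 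Therefore, writing $R:=\sum_{i\notin I}c_i\otimes c_i=C-I_k$ (using $\sum_{i\in I}c_i\otimes c_i=\sum_{l}e_l\otimes e_l=I_k$),
\[
\vol{k}(\cube^n\cap H)=2^k\sqrt{\det C}=2^k\sqrt{\det(I_k+R)}.
\]

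Next I would maximize $\det(I_k+R)$ over all $R=\sum_{i\notin I}c_i\otimes c_i$ with $\|c_i\|_1\le1$; there are exactly $n-k$ summands. The point is that for any positive definite $B$ the function $c\mapsto\det(B+c\otimes c)=\det B+\iprod{c}{\operatorname{adj}(B)\,c}$ (where $\operatorname{adj}(B)$ is the adjugate of $B$) is a \emph{strictly} convex quadratic in $c$, since its Hessian is $2\operatorname{adj}(B)\succ0$; hence its maximum over the cross-polytope $\{c:\|c\|_1\le1\}$ is attained only at the vertices $\pm e_l$. Applying this one variable at a time, with $B=I_k+\sum_{i'\ne i,\,i'\notin I}c_{i'}\otimes c_{i'}\succ0$, shows that at a maximizer every $c_i$ ($i\notin I$) equals some $\pm e_{l_i}$; then $R=\operatorname{diag}(r_1,\dots,r_k)$ with each $r_l$ a nonnegative integer and $\sum_l r_l=n-k$, so with $m_l:=1+r_l\ge1$ we get $\det(I_k+R)=\prod_l m_l$ and $\sum_l m_l=n$. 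Since the product of positive integers with fixed sum $n$ is maximized exactly by the balanced choice $m_l\in\{\lfloor n/k\rfloor,\lceil n/k\rceil\}$, this gives $\max\det(I_k+R)=\lceil n/k\rceil^{\,n-k\lfloor n/k\rfloor}\lfloor n/k\rfloor^{\,k-(n-k\lfloor n/k\rfloor)}=C_{\cube}(n,k)^2$, which establishes formula \eqref{eq:optimal_constant} together with the bound $\vol{k}(\cube^n\cap H)\le2^kC_{\cube}(n,k)$.

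For the description of the extremizers I would run the chain of equalities backwards. Equality forces $C=\operatorname{diag}(m_1,\dots,m_k)$ with $(m_l)$ a balanced partition of $n$ (and such a partition is unique up to order), hence $WW^{\mathsf T}=\operatorname{diag}(1/m_l)$, i.e. the facet normals $w_l=v_{i_l}$ are pairwise orthogonal with $|w_l|=1/\sqrt{m_l}$, so $Q$ is genuinely a box with side lengths $2\sqrt{m_l}$. Moreover $v_i=W^{\mathsf T}c_i=\pm w_{l_i}$ for every $i$, so grouping $[n]$ into the parts $J_l:=\{i:l_i=l\}$ (with $|J_l|=m_l$, so cardinalities differing by at most one) the vectors $\{v_i:i\in J_l\}$ are all $\pm w_l$; reconstructing $H$ from $S$ by assertion~\eqref{ass:eq_cond_Gram} of \Href{Lemma}{lem:tight_frame_equiv_cond} (a generic point of $H$ has $i$-th coordinate $\iprod{v_i}{y}=\pm\iprod{w_{l_i}}{y}$, so within each part these coordinates agree up to sign, and a dimension count closes the inclusion) shows that $H$ is cut out precisely by the sign-equations $\varepsilon_ix[i]=\varepsilon_{i'}x[i']$ inside each $J_l$---which is exactly the rule in the statement. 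Conversely, for a subspace $H_0$ built by that rule the projections of the standard basis form the tight frame with $|v_i|=1/\sqrt{m_l}$ for $i\in J_l$, and $\cube^n\cap H_0=\{y:|y_l|\le\sqrt{m_l}\}$ has volume $\prod_l 2\sqrt{m_l}=2^kC_{\cube}(n,k)$, so the bound is attained exactly on these subspaces. I expect the part needing the most care to be the very first step: verifying that the $k$ facet normals of the box $Q$ are themselves among the frame vectors $v_i$ (with $c_{i_l}=e_l$), since this is precisely what forces the matrix $I_k+R$ appearing in the determinant to be positive definite, making the convexity argument strict and eliminating the degenerate cases one would otherwise have to treat by hand.
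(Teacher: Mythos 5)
Your proof is correct, but it takes a genuinely different route from the paper's. The paper's argument first invokes the perturbation machinery (the argument of \Href{Lemma}{lem:hyperplane_meets_facet}, resting on \Href{Lemma}{lem:n_s_cond_cube} and \Href{Lemma}{lem:geom_sense_B_S_v}) to show that \emph{every} hyperplane $H_{v_i}$ supports the affine cube $P$ in a facet, so that every $v_i$ equals $\pm a_j$ for one of the $k$ facet normals $a_j$; it then feeds the rescaled normals $\{\sqrt{d_j}\,a_j\}$ into assertion \eqref{ass:eq_cond_it3} of \Href{Lemma}{lem:tight_frame_equiv_cond} to get orthogonality and $|a_j|^2=1/d_j$, hence $\vol{k}(\cube^n\cap H)=2^k\sqrt{d_1\cdots d_k}$, and closes with the same integer-balancing step you use. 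You instead pin down only $k$ frame vectors as the facet normals $w_l$ of the box, encode the remaining $n-k$ vectors by their coordinate vectors $c_i$ in the basis $\{w_l\}$ (forced into the cross-polytope by the slab inclusions), and reduce the whole problem to maximizing $\det\bigl(I_k+\sum_{i\notin I}c_i\otimes c_i\bigr)$; the strict convexity of $c\mapsto\det(B+c\otimes c)$ then yields, in one stroke, both that all $v_i$ are $\pm w_{l}$ at the optimum and (via $WW^{\mathsf T}=C^{-1}$ diagonal) the orthogonality of the normals. Your version is more self-contained -- it does not rely on the variational lemmas, whose application here the paper only gestures at with ``the same arguments as in \Href{Lemma}{lem:hyperplane_meets_facet}'' (an adaptation is needed since one is maximizing over the restricted class of affine-cube sections) -- and it gives a cleaner equality-case analysis; the paper's version is shorter because it reuses machinery already built. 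The one step you flag as delicate, namely that the facet normals of $Q$ occur among the $v_i$, is indeed the right thing to worry about, but it follows from the standard fact that each facet of a full-dimensional polytope presented as an intersection of half-spaces lies in one of the bounding hyperplanes, so your argument is complete.
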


From now on, we will study properties of the maximizers of \eqref{eq:main_problem_cube} and work with tight frames.

\section{Operations on frames} \label{sec_pertubations}
The following approach to our problem  was proposed in \cite{zonotop} and
used in \cite{crospol} to study the properties of  projections of the standard cross-polytope.

The main idea is to transform a given tight frame $S$ into a new one $S'$ and compare the volumes of the sections of the cube generated by them.  
Since it is not very convenient to transform a given tight frame into another one, we add an intermediate step: we transform a tight frame $S$ into a frame $\tilde{S}$, and then we transform $\tilde{S}$ into a new tight frame $S'$ using a linear transformation.
The main observation here is that we can always transform any frame $\tilde{S} = \{v_1, \ldots, v_n\}$ into a tight frame $S'$ using a suitable linear transformation $L$: $S' = L\tilde{S}  = \{L v_1, \ldots, L v_n\}$.
Equivalently, any non-degenerate centrally symmetric polytope in $\R^k$ is an affine image of a section of a high dimension cube.

For a frame $S$ in $\R^k$, by definition put 
\begin{equation*}
	B_S=A_S^{-\frac{1}{2}} = \left( \sum\limits_{v\in S} v\otimes v\right)^{-\frac{1}{2}}.
\end{equation*}
The operator $B_S$ is well-defined as the condition $\span S = \R^k$ implies that $A_S$ is a positive definite operator. Clearly, $B_S$ maps any frame $S$ to a tight frame:
\begin{equation*}
	\sum_{v\in S} B_S v\otimes B_S v=B_S \left( \sum_{v\in S} v\otimes v\right) B_S^T = B_S A_S B_S = I_k.
\end{equation*}

We obtain the following necessary and sufficient condition  for a tight frame to be a maximizer of \eqref{eq:main_problem_cube}.
\begin{lemma}\label{lem:n_s_cond_cube}
	The maximum of~\eqref{eq:main_problem_cube} is attained at a tight frame $S\in\Omega(n,k)$ iff for an arbitrary frame $\tilde{S}$ in $\R^k$ inequality
		\begin{equation}\label{eq:criterion_inequality_cube}
			\frac{\vol{k}\cube(\tilde{S})}{\vol{k} \cube(S)}\leq\frac{1}{\sqrt{\det A_{\tilde{S}}}}
		\end{equation}
	holds.
\end{lemma}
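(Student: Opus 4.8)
The plan is to turn the factor $1/\sqrt{\det A_{\tilde S}}$ into an exact volume identity by tracking how the operation $S\mapsto\cube(S)$ behaves under a linear change of coordinates, and then to read off both implications of the ``iff''. The two ingredients are: the transformation rule $\cube(LS)=(L^{T})^{-1}\cube(S)$ for an invertible $L$ on $\R^k$, and the facts recorded just before the lemma, namely that $B_{\tilde S}=A_{\tilde S}^{-1/2}$ is symmetric positive definite and sends any frame $\tilde S$ to a tight frame $B_{\tilde S}\tilde S\in\Omega(n,k)$.

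First I would verify the transformation rule. For an invertible $L$ on $\R^k$ and a frame $S=\{v_1,\dots,v_n\}$, a point $x$ lies in $\cube(LS)$ exactly when $|\iprod{x}{Lv_i}|=|\iprod{L^{T}x}{v_i}|\le 1$ for every $i$, i.e. when $L^{T}x\in\cube(S)$; hence $\cube(LS)=(L^{T})^{-1}\cube(S)$ and therefore $\vol{k}\cube(LS)=|\det L|^{-1}\,\vol{k}\cube(S)$. (Here one uses that $\span\tilde S=\R^k$ forces $A_{\tilde S}$ to be positive definite, so $\cube(\tilde S)$ is a bounded, full-dimensional, centrally symmetric polytope whose volume is a positive real number, and the ratios below make sense.) Applying this with $L=B_{\tilde S}$, for which $(L^{T})^{-1}=L^{-1}=A_{\tilde S}^{1/2}$ by symmetry, and writing $S':=B_{\tilde S}\tilde S\in\Omega(n,k)$, one obtains the key identity
\[
\vol{k}\cube(S')=\sqrt{\det A_{\tilde S}}\;\vol{k}\cube(\tilde S),
\qquad\text{i.e.}\qquad
\frac{\vol{k}\cube(\tilde S)}{\vol{k}\cube(S)}=\frac{1}{\sqrt{\det A_{\tilde S}}}\cdot\frac{\vol{k}\cube(S')}{\vol{k}\cube(S)}.
\]

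With this identity the lemma splits into two short steps. For the forward direction, if $S$ maximizes \eqref{eq:main_problem_cube} over $\Omega(n,k)$, then $\vol{k}\cube(S')\le\vol{k}\cube(S)$ for the tight frame $S'$ produced from a given $\tilde S$, and the displayed identity immediately yields \eqref{eq:criterion_inequality_cube}. For the converse, it suffices to feed \eqref{eq:criterion_inequality_cube} with the tight frames: if $\tilde S\in\Omega(n,k)$ then $A_{\tilde S}=I_k$, so $\det A_{\tilde S}=1$ and the inequality reads $\vol{k}\cube(\tilde S)\le\vol{k}\cube(S)$; since $\tilde S$ was an arbitrary element of $\Omega(n,k)$ and $S\in\Omega(n,k)$, this says precisely that $S$ attains the maximum.

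I do not expect a genuine obstacle here: the content is pure change-of-variables bookkeeping. The only points requiring care are keeping the adjoint and the inverse in the right places, so that the Jacobian comes out as $|\det L|^{-1}$ rather than $|\det L|$; noting that $\cube(\tilde S)$ is a bona fide bounded body so the volume ratios are well defined (guaranteed by $\span\tilde S=\R^k$); and correctly invoking from the discussion preceding the lemma both that $B_{\tilde S}$ is symmetric and that $B_{\tilde S}\tilde S$ is tight.
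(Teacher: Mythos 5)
Your proposal is correct and follows essentially the same route as the paper: pass from a frame $\tilde S$ to the tight frame $B_{\tilde S}\tilde S$, use the change-of-variables identity $\vol{k}\cube(B_{\tilde S}\tilde S)=\vol{k}\cube(\tilde S)/\det B_{\tilde S}=\sqrt{\det A_{\tilde S}}\,\vol{k}\cube(\tilde S)$, and note that the converse follows by testing \eqref{eq:criterion_inequality_cube} on tight frames, where $\det A_{\tilde S}=1$. Your write-up is in fact slightly more explicit than the paper's on the transformation rule $\cube(LS)=(L^{T})^{-1}\cube(S)$ and on why every tight frame is reached, but the substance is identical.
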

\begin{proof}
For any frame $\tilde{S}$, we have that $B_{\tilde{S}}\tilde{S}$ is a tight frame and 
$\vol{k}\cube(B_{\tilde{S}}\tilde{S})=\vol{k}\cube(\tilde{S})/\det B_{\tilde{S}}.$ 
The maximum of~\eqref{eq:main_problem_cube} is attained at a tight frame $S$ iff 
$\vol{k}\cube(B_{\tilde{S}}\tilde{S})\leq\vol{k}\cube(S)$ 
for an arbitrary frame $\tilde{S}.$ Hence
	\begin{equation*}
	1\ge\frac{\vol{k}\cube(B_{\tilde{S}}\tilde{S})}{\vol{k}\cube(S)} = \frac{\vol{k}\cube(\tilde{S})}{\det B_{\tilde{S}}}\frac{1}{\vol{k}\cube(S)}= 
	\frac{\vol{k}\cube(\tilde{S})}{\vol{k}\cube(S)} \sqrt{\det A_{\tilde{S}}}
	\end{equation*}
Dividing by $\sqrt{\det A_{\tilde{S}}}$, we obtain inequality \eqref{eq:criterion_inequality_cube}.
\end{proof}
Clearly, if $\tilde{S}$ in the assertion of \Href{Lemma}{lem:n_s_cond_cube} is close to $S,$ then the tight frame $B_{\tilde{S}} \tilde{S}$ is close to $S$ as well. Therefore, inequality \eqref{eq:criterion_inequality_cube} gives a necessary condition for local maximizers of \eqref{eq:main_problem_cube}. Let us illustrate how we will use it. 

Let $S$ be an extremizer of \eqref{eq:main_problem_cube}, and
$T$ be a map from a subset of $\Omega(n,k)$ to the set of frames in $\R^k$.
In order to obtain properties of extremizers, we  consider a composition of two operations:
	\begin{equation*}\label{eq:framization2}
		S \quad \stackrel{T}{\longrightarrow} \quad \tilde{S} \quad \stackrel{B_{\tilde{S}}}{\longrightarrow} \quad S',
	\end{equation*}  
where  $B_{\tilde{S}}$ is as defined above. For example, see Figure \ref{fig:example_T_v_to_zero}, where  $T$ is the operation of replacing a vector $v$ of $S$ by the origin.
\begin{figure}[h!]
	\includegraphics[scale=1.2]{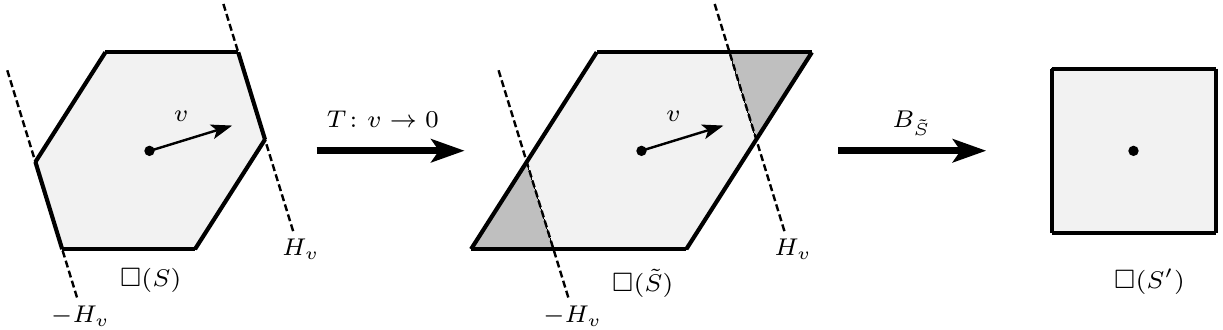}
	\caption{Here we map one vector to zero.}\label{fig:example_T_v_to_zero}
\end{figure}
 
\noindent

Choosing  a  simple operation $T,$   we may calculate the left-hand side of 
\eqref{eq:criterion_inequality_cube} in some geometric terms. We consider several simple operations:  Scaling one or several vectors, mapping one vector to the origin, 
mapping one vector to another.    
On the other hand, the determinant in the   right-hand  side of 
\eqref{eq:criterion_inequality_cube}  can be calculated for the operations listed above.

In particular, the following first-order approximation of the determinant was obtained by the author in 
\cite[Theorem 1.2]{zonotop}. We provide a sketch of its proof in  \Href{Appendix}{sec:appendix}.
\begin{lemma}\label{lem:der_det_identity}
	Let $S=\{v_1,\ldots,v_n\}\subset\R^k$ be a tight frame and the $n$-tuple $\tilde{S}$ be obtained from $S$ by substitution $v_i\to v_i+tx_i$, where $t\in\R$, $x_i\in\R^k$, $i\in[n]$. Then
		\begin{equation*}
			\sqrt{\det A_{\tilde{S}}}=1+t\sum_{i=1}^n\langle x_i, v_i\rangle+o(t).
		\end{equation*}
\end{lemma}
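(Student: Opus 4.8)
The plan is to expand $A_{\tilde S}$ to first order in $t$, apply the standard first-order expansion of the determinant near the identity, and then take the square root. First I would write out the matrix $A_{\tilde S}$ explicitly using the substitution $v_i \mapsto v_i + t x_i$ and bilinearity of the tensor product:
\[
A_{\tilde S} \;=\; \sum_{i=1}^n (v_i + t x_i)\otimes(v_i + t x_i)
\;=\; \sum_{i=1}^n v_i\otimes v_i \;+\; t\sum_{i=1}^n\bigl(x_i\otimes v_i + v_i\otimes x_i\bigr)\;+\; t^2\sum_{i=1}^n x_i\otimes x_i.
\]
Since $S$ is a tight frame, the first sum is $I_k$ by \eqref{John_cond_3}, so $A_{\tilde S} = I_k + tM + t^2 N$ with $M = \sum_i (x_i\otimes v_i + v_i\otimes x_i)$ a self-adjoint operator and $N = \sum_i x_i\otimes x_i$.

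Next I would use the elementary fact that for fixed operators $M, N$ one has $\det(I_k + tM + t^2 N) = 1 + t\operatorname{tr} M + o(t)$ as $t\to 0$ — this follows from the multilinearity of the determinant in the columns (the coefficient of $t$ in the expansion is exactly $\operatorname{tr} M$, and all other contributions are $O(t^2)$). It then remains to compute $\operatorname{tr} M$. Using that $\operatorname{tr}(u\otimes v) = \iprod{u}{v}$ for any $u,v\in\R^k$ (the matrix of $u\otimes v$ has $(j,l)$-entry $u_l v_j$, so its trace is $\sum_j u_j v_j$), we get
\[
\operatorname{tr} M = \sum_{i=1}^n\bigl(\iprod{x_i}{v_i} + \iprod{v_i}{x_i}\bigr) = 2\sum_{i=1}^n \iprod{x_i}{v_i}.
\]
Hence $\det A_{\tilde S} = 1 + 2t\sum_{i=1}^n\iprod{x_i}{v_i} + o(t)$.

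Finally, applying the Taylor expansion $\sqrt{1+s} = 1 + \tfrac{s}{2} + o(s)$ with $s = 2t\sum_i\iprod{x_i}{v_i} + o(t)\to 0$ yields
\[
\sqrt{\det A_{\tilde S}} = 1 + t\sum_{i=1}^n\iprod{x_i}{v_i} + o(t),
\]
as claimed. I do not expect any genuine obstacle here; the only points requiring a little care are recording the trace identity $\operatorname{tr}(u\otimes v)=\iprod{u}{v}$ and justifying that the $t^2 N$ term and the cross terms in $\det(I_k + tM + t^2N)$ contribute only $o(t)$, which is immediate from the polynomial nature of the determinant. One could alternatively phrase the determinant step via the identity $\det(I_k + tB) = 1 + t\operatorname{tr} B + o(t)$ after absorbing $N$ into an $O(t)$ perturbation of $M$, but the direct expansion is cleanest.
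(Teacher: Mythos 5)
Your proof is correct, and it takes a genuinely different and more elementary route than the paper's. You expand $A_{\tilde S}=I_k+tM+t^2N$ directly (using that $A_S=I_k$ for a tight frame) and invoke the standard fact $\det(I_k+tM+t^2N)=1+t\operatorname{tr}M+o(t)$ together with $\operatorname{tr}(u\otimes v)=\iprod{u}{v}$; every step is sound, including the observation that the $t^2N$ term and the higher products in the multilinear expansion of the determinant only contribute at order $t^2$. The paper instead first reduces to perturbing a single vector, then applies the Cauchy--Binet formula to $\det\bigl(\sum v'_i\otimes v'_i\bigr)$, rewrites the first-order term via cross products $[v_L]$ of $(k-1)$-subsets of the frame, and proves (via exterior powers and the Hodge star) that these cross products themselves form a tight frame in $\R^k$, which collapses the sum to $\iprod{v_i}{x}$. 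Your argument is shorter and self-contained; what the paper's heavier machinery buys is the auxiliary fact that $\{[v_L]\}_{L\in\binom{[n]}{k-1}}$ is a tight frame, a structural statement about tight frames that is of independent interest but is not needed for the lemma itself. For the statement as posed, your direct perturbation of the identity is arguably the cleaner proof.
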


We state as  lemmas several technical facts from linear algebra that will be used later.

\begin{lemma}\label{lem:determinant_for_substitution}
Let $A$ be a positive definite operator on $\R^k$. For any $u \in \R^k,$ we have
\[
\det \left(A \pm u\otimes u \right) =(1 \pm |A^{-1/2} u|^2) \det A.
\] 
\end{lemma}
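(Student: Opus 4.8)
The plan is to reduce the rank-one update to the identity operator by conjugating with $A^{1/2}$. Since $A$ is positive definite, the operators $A^{1/2}$ and $A^{-1/2}$ are well-defined, symmetric and positive definite. First I would record the algebraic identity
\[
A \pm u\otimes u = A^{1/2}\bigl(I_k \pm w\otimes w\bigr)A^{1/2}, \qquad w := A^{-1/2}u ,
\]
which follows because for symmetric $B$ and any $x$ one has $B(w\otimes w)Bx = \langle w, Bx\rangle\, Bw = \langle Bw, x\rangle\, Bw = (Bw)\otimes(Bw)\,x$, and here $A^{1/2}w = u$. Taking determinants and using $\det A^{1/2}=\sqrt{\det A}$ gives
\[
\det\bigl(A \pm u\otimes u\bigr) = \det A \cdot \det\bigl(I_k \pm w\otimes w\bigr).
\]

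Next I would compute $\det(I_k \pm w\otimes w)$ directly from the spectrum of $w\otimes w$: this operator acts as multiplication by $|w|^2$ on $\operatorname{span}\{w\}$ and as $0$ on $w^\perp$, so $I_k \pm w\otimes w$ has eigenvalue $1\pm|w|^2$ with multiplicity $1$ and eigenvalue $1$ with multiplicity $k-1$ (if $w=0$ the statement is trivial). Hence $\det(I_k\pm w\otimes w) = 1\pm|w|^2 = 1 \pm |A^{-1/2}u|^2$, and combining with the previous display yields the claimed formula.

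There is essentially no serious obstacle here; the only points requiring a little care are the existence and symmetry of $A^{\pm 1/2}$ (guaranteed by positive definiteness) and the verification of the conjugation identity above. As an alternative one could instead evaluate $\det\begin{pmatrix} A & \mp u\\ u^{T} & 1\end{pmatrix}$ by taking Schur complements with respect to each diagonal block: expanding about the block $A$ gives $\det A\,(1 \pm u^{T}A^{-1}u)$, expanding about the scalar block gives $\det(A \pm u\otimes u)$, and $u^{T}A^{-1}u = \langle A^{-1/2}u, A^{-1/2}u\rangle = |A^{-1/2}u|^2$; this reproves the lemma without invoking square roots of $A$.
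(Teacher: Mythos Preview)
Your proof is correct and follows essentially the same route as the paper: factor $A \pm u\otimes u = A^{1/2}(I_k \pm w\otimes w)A^{1/2}$ with $w=A^{-1/2}u$, then read off $\det(I_k\pm w\otimes w)=1\pm|w|^2$ from the spectrum. The paper's argument is just a terser version of yours (it simply says ``diagonalizing''), and your Schur-complement alternative is a nice bonus not present in the original.
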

\begin{proof}
We have
\[
\det \left(A \pm u\otimes u \right) = \det A \cdot \det\left(I_k \pm 
A^{-1/2} u \otimes A^{-1/2} u\right).
\]
Diagonalizing the operator $I_k \pm A^{-1/2} u \otimes A^{-1/2} u,$ we see that its determinant equals $1 \pm |A^{-1/2} u|^2.$ This completes the proof.
\end{proof}

For any $n$-tuple $S$ of vectors of $\R^k$ with $v \in S,$  we use $S\setminus v$ to denote the $(n-1)$-tuple of vectors obtained from $S$ by removing the first occurrence of $v$ in $S.$

\begin{lemma}\label{lem:geom_sense_B_S_v}
Let $S\in\Omega(n,k)$ and $v \in S$ be a vector such that $|v| <1.$ Then $S\setminus v$ is a frame in $\R^k$ and $B_{S \setminus v}$ is the stretch of $\R^k$ by the factor $(1-|v|^2)^{-1/2}$ along  $\span\{v\}.$
In particular, for any $u \in \R^k,$ we have $|B_{S \setminus v} u|\geq|u| .$
\end{lemma}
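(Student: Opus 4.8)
The plan is to prove the two assertions of Lemma~\ref{lem:geom_sense_B_S_v} separately, starting with the claim that $S\setminus v$ is a frame. Since $S\in\Omega(n,k)$, we have $A_S=\sum_{u\in S}u\otimes u=I_k$, hence $A_{S\setminus v}=I_k-v\otimes v$. I would first check that this operator is positive definite, which by Lemma~\ref{lem:determinant_for_substitution} (with $A=I_k$) boils down to $\det(I_k-v\otimes v)=1-|v|^2>0$, true precisely because $|v|<1$; more precisely, acting on an arbitrary $x\in\R^k$ gives $\langle (I_k-v\otimes v)x,x\rangle=|x|^2-\langle v,x\rangle^2\geq |x|^2(1-|v|^2)>0$ for $x\neq 0$. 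Positive definiteness of $A_{S\setminus v}$ forces $\operatorname{span}(S\setminus v)=\R^k$, so $S\setminus v$ is indeed a frame and $B_{S\setminus v}=A_{S\setminus v}^{-1/2}=(I_k-v\otimes v)^{-1/2}$ is well defined.

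Next I would identify $B_{S\setminus v}$ explicitly as the announced stretch. Write $e=v/|v|$ (the case $v=0$ being trivial, where $B_{S\setminus v}=I_k$). On the orthogonal complement of $\operatorname{span}\{v\}$ the operator $I_k-v\otimes v$ acts as the identity, while on $\operatorname{span}\{e\}$ it acts as multiplication by $1-|v|^2$. Hence $A_{S\setminus v}$ is diagonal in an orthonormal basis adapted to this splitting, with eigenvalue $1-|v|^2$ on $\operatorname{span}\{v\}$ and eigenvalue $1$ on its complement; taking the inverse square root, $B_{S\setminus v}=A_{S\setminus v}^{-1/2}$ has eigenvalue $(1-|v|^2)^{-1/2}$ on $\operatorname{span}\{v\}$ and $1$ elsewhere. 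This is exactly the stretch of $\R^k$ by the factor $(1-|v|^2)^{-1/2}$ along $\operatorname{span}\{v\}$.

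Finally, the inequality $|B_{S\setminus v}u|\geq|u|$ for every $u\in\R^k$ is immediate from the eigenvalue description: decomposing $u=\alpha e+w$ with $w\perp v$, we get $|B_{S\setminus v}u|^2=\alpha^2(1-|v|^2)^{-1}+|w|^2\geq \alpha^2+|w|^2=|u|^2$, since $(1-|v|^2)^{-1}\geq 1$. Equivalently, all eigenvalues of the symmetric positive definite operator $B_{S\setminus v}$ are at least $1$.

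None of these steps is a genuine obstacle: the whole lemma is a direct spectral computation for the rank-one perturbation $I_k-v\otimes v$, and the only point that needs the hypothesis $|v|<1$ is ensuring $1-|v|^2>0$ so that $A_{S\setminus v}$ stays positive definite and the square root exists. The mild care required is in handling the degenerate case $v=0$ and in stating the ``stretch along $\operatorname{span}\{v\}$'' language cleanly, but these are cosmetic.
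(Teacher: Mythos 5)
Your proof is correct and follows essentially the same route as the paper: both establish positive definiteness of $A_{S\setminus v}=I_k-v\otimes v$ from $|v|<1$, read off its spectral decomposition (eigenvalue $1-|v|^2$ along $\span\{v\}$, eigenvalue $1$ on the orthogonal complement), and take the inverse square root to identify $B_{S\setminus v}$ as the stated stretch. Your version merely spells out the details the paper leaves as ``clearly.''
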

\begin{proof}
Since $A_{S\setminus v}=I_k-v\otimes v > 0,$ we have that $S\setminus v$ is a frame in $\R^k$.  Clearly, $A_{S\setminus v}$ stretches the space  by the factor $(1-|v|^2)$ along  $\span \{v\}
.$
Therefore,  the operator $B_{S\setminus v} = A_{S\setminus v}^{-1/2}$ 
stretches the space by the factor $(1-|v|^2)^{-1/2}$ along the same direction.
\end{proof}

\section{Properties of a global maximizer}
\Href{Theorem}{thm:squared_length_estimate} is formulated  in terms of subspaces. For the sake of convenience, we introduce  its equivalent  reformulation in terms of tight frames.  
\begin{thm}[Frame version of Theorem \ref{thm:squared_length_estimate}]
\label{thm:squared_length_estimate_frame}
Let $S \in \Omega(n,k)$ be a global  maximizer of \eqref{eq:main_problem_cube} for $n > k \geq 1,$ let $v \in S.$ Then 
		\begin{equation}\label{eq:squared_length_estimate}
			\frac{k}{n+k}\leq |v|^2 \leq\frac{k}{n-k}.
		\end{equation}
\end{thm}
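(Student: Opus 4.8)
The plan is to exploit the criterion of Lemma~\ref{lem:n_s_cond_cube} with two carefully chosen operations $T$ on the frame, one producing the upper bound and one the lower bound. Throughout, fix $S\in\Omega(n,k)$ a global maximizer and $v\in S$; write $a=|v|^2$. Since $\span S=\R^k$ and $S$ has $n>k$ vectors, the Gram matrix of $S$ is a projection of rank $k$, so $\sum_{w\in S}|w|^2=\operatorname{tr}\big(\sum w\otimes w\big)=\operatorname{tr} I_k=k$; in particular $a\le k$ trivially, but we need the sharper bounds.

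For the \textbf{upper bound} $a\le \frac{k}{n-k}$, I would apply $T$ = ``delete $v$'' (replace $v$ by the origin), provided $a<1$; if $a\ge 1$ one checks directly that $\frac{k}{n-k}\ge 1>a$ would fail, so in fact one must handle this by noting $n-k\ge 1$ forces us to argue $a<1$ first — more cleanly, if $a\ge1$ then the slab $H_v^+\cap H_v^-$ already shows $\cube(S)$ is unbounded unless other vectors compensate, so instead I would phrase everything through $\tilde S=S\setminus v$ viewed as a frame and invoke Lemma~\ref{lem:geom_sense_B_S_v}: $B_{\tilde S}$ is the stretch of $\R^k$ by $(1-a)^{-1/2}$ along $\span\{v\}$, hence $\cube(\tilde S)\supseteq\cube(S)$ (removing a constraint only enlarges the polytope) and $\det A_{\tilde S}=\det(I_k-v\otimes v)=1-a$ by Lemma~\ref{lem:determinant_for_substitution}. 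Plugging into \eqref{eq:criterion_inequality_cube} gives
\[
1\le\frac{\vol{k}\cube(\tilde S)}{\vol{k}\cube(S)}\le\frac{1}{\sqrt{\det A_{\tilde S}}}=\frac{1}{\sqrt{1-a}},
\]
which is vacuous, so a cruder deletion is not enough — I need a \emph{quantitative} lower bound on $\vol{k}\cube(\tilde S)/\vol{k}\cube(S)$. The right tool is that $\cube(S)=\cube(\tilde S)\cap\{|\langle x,v\rangle|\le1\}$, and $\cube(\tilde S)$, being generated by the frame $S\setminus v$ whose Gram sum is $I_k-v\otimes v$, is a cylinder over its section by $v^\perp\cap H$ with cross-section in the $v$-direction of half-width controlled by $\sum_{\star}$; more precisely $\cube(\tilde S)$ is unbounded exactly in directions where $S\setminus v$ fails to span, but here it still spans, so it is bounded, and an averaging/Fubini estimate of how much volume the slab $\{|\langle x,v\rangle|\le1\}$ removes yields $\vol{k}\cube(\tilde S)/\vol{k}\cube(S)\ge$ something like $1+\frac{a}{k}\cdot(\text{const})$. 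Comparing with $(1-a)^{-1/2}\approx 1+\tfrac a2$ then forces an inequality on $a$; tuning the constants (I expect the clean route is the second-order version, or directly a one-parameter scaling family $v\to sv$, $s\in[0,1]$, integrated) should produce exactly $a\le\frac{k}{n-k}$.

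For the \textbf{lower bound} $a\ge\frac{k}{n+k}$, the natural operation is $T$ = ``duplicate $v$'' (or scale $v$ up), i.e. consider $\tilde S$ with $v$ replaced by $sv$ for $s\ge1$, or adjoin an extra copy; then $A_{\tilde S}=I_k+(s^2-1)v\otimes v$, so $\det A_{\tilde S}=1+(s^2-1)a$ by Lemma~\ref{lem:determinant_for_substitution}, while $\cube(\tilde S)$ is $\cube(S)$ with the slab in direction $v$ shrunk by factor $s$, so its volume is at most $\vol{k}\cube(S)$ and at least $\vol{k}\cube(S)/s$ (the slab can cut at most the fraction of $\cube(S)$ lying outside $\{|\langle x,v\rangle|\le 1/s\}$, and by convexity plus the centroid being at $0$, one bounds this). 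Feeding $\vol{k}\cube(\tilde S)/\vol{k}\cube(S)\ge 1/s$ into \eqref{eq:criterion_inequality_cube} gives $\frac1s\le\frac1{\sqrt{1+(s^2-1)a}}$, i.e. $1+(s^2-1)a\le s^2$, which is again automatic; so as above I need the sharper lower bound on the volume ratio using that the total ``mass'' $\sum|w|^2=k$ and that $n$ vectors must fit — the interplay $a$ vs.\ $\frac{k}{n}$ — and pushing $s\to\infty$ or optimizing over $s$ converts the refined estimate into $a\ge\frac{k}{n+k}$.

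The \textbf{main obstacle} is precisely obtaining the non-trivial two-sided volume estimates $1\pm\frac{a}{k}\cdot c \le \vol{k}\cube(\tilde S)/\vol{k}\cube(S)$ for the scaled/deleted frame: the first-order term from Lemma~\ref{lem:der_det_identity} only reproduces the trivial inequalities, so one must either go to second order in the perturbation $v\to(1+t)v$ — where the curvature term involves $\sum_i\langle x_i,v_i\rangle^2$ and a Hessian of $\log\det$ that I would compute via Lemma~\ref{lem:determinant_for_substitution} — or find the exact geometric identity for how the volume of $\cube(S)$ changes when one slab is dilated, expressing it through the $(k-1)$-volume of the facet cut out by $H_v$ (which by part of the philosophy behind Theorem~\ref{thm:cube_first_order_n_c} is a genuine facet with centroid on $\span\{v\}$). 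I expect the cleanest writeup integrates the scaling family and uses the elementary fact that for a centrally symmetric convex body $K$ and a linear functional, $s\mapsto \vol{k}(K\cap\{|\ell|\le 1/s\})$ is controlled by $\vol{k}K$ and the slab widths, combined with $\sum_{w\in S}|w|^2=k$ to bring in $n$.
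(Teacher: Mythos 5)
Your plan stalls exactly where you say it does, and the fix you propose is not the one that works. Both of your operations (deleting $v$, or scaling/duplicating $v$) act only on the vector $v$ itself; as you correctly compute, for these the criterion of Lemma~\ref{lem:n_s_cond_cube} reduces to a tautology, because the determinant change and the trivial volume comparison exactly cancel at the relevant order. You then propose to rescue the argument with a genuinely quantitative estimate of how much volume the slab $\{|\iprod{x}{v}|\le 1\}$ removes from $\cube(S\setminus v)$ (via Fubini, second-order expansions, or facet volumes). That estimate is not supplied, and it is not clear it can be made to produce the constants $k/(n\pm k)$; this is a real gap, not a routine verification.

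The missing idea is a \emph{swap} involving a second vector of the frame. Since $\sum_{w\in S}|w|^2=\operatorname{tr} I_k=k$ (which you do note), there exist $u,w\in S$ with $|u|^2\le k/n$ and $|w|^2\ge k/n$. For the upper bound one replaces $u$ (not $v$!) by a second copy of $v$: the new $n$-tuple $\tilde S$ imposes a strictly smaller set of constraints (the slab of $u$ is dropped, the added slab duplicates one already present), so $\cube(S)\subseteq\cube(\tilde S)$ and the volume ratio in \eqref{eq:criterion_inequality_cube} is at least $1$ \emph{for free} --- no volume estimate is needed. The entire quantitative content then sits in the determinant: by Lemma~\ref{lem:determinant_for_substitution} applied twice and Lemma~\ref{lem:geom_sense_B_S_v},
\[
1\ \ge\ \det A_{\tilde S}=(1+|B_{S\setminus u}v|^2)(1-|u|^2)\ \ge\ (1+|v|^2)(1-|u|^2),
\]
whence $|v|^2\le |u|^2/(1-|u|^2)\le (k/n)/(1-k/n)=k/(n-k)$. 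The lower bound is symmetric: substitute $v\to w$ (again $\cube(S)\subseteq\cube(\tilde S)$ trivially) and obtain $1\ge(1+|w|^2)(1-|v|^2)$, i.e.\ $|v|^2\ge |w|^2/(1+|w|^2)\ge k/(n+k)$. So the pigeonhole on $\sum|w|^2=k$ enters through the \emph{choice of which vector to overwrite}, not through a refined volume estimate; without that swap your argument does not close.
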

\begin{proof}
	The definition of tight frame implies that
		$\sum_{v\in S}|v|^2=k.$
	Hence there is a vector $u\in S$ such that $|u|^2\leq k/n$ and a vector $w\in S$ such that $|w|^2\geq k/n$.

	We start with the rightmost inequality in~\eqref{eq:squared_length_estimate}. Let $\tilde{S}$ be the $n$-tuple obtained from $S$ by substitution $u\to v$. Since $|u|^2\leq k/n<1$, we have $A_{\tilde{S}}=I_k-u\otimes u+v\otimes v\geq I_k-u\otimes u>0$. Hence $\tilde{S}$ is a frame in $\R^k$. By identity~\eqref{generated_section},  inclusion $\cube(S)\subset\cube(\tilde{S})$ holds. Therefore, $\vol{k}\cube(S)\leq\vol{k}\cube(\tilde{S})$. Using \Href{Lemma}{lem:n_s_cond_cube} and \Href{Lemma}{lem:determinant_for_substitution}, we obtain
		$$1\geq\det A_{\tilde{S}}=(1+|B_{S\setminus u}v|^2)(1-|u|^2). $$
	By \Href{Lemma}{lem:geom_sense_B_S_v},  the right-hand side of this inequality is at least $(1+|v|^2)(1-|u|^2).$ Therefore, 
		$$|v|^2\leq\frac{|u|^2}{1-|u|^2}\leq\frac{k}{n-k}.$$

	Let us prove the leftmost inequality in~\eqref{eq:squared_length_estimate}. There is nothing to prove if $|v|^2 \geq k/n.$ Assume that $|v|< k/n$. Let $\tilde{S}$ be the $n$-tuple obtained from $S$ by substitution $v\to w$. Since $A_{\tilde{S}}=I-v\otimes v+u\otimes u>0,$ $\tilde{S}$ is a frame in $\R^k$. By identity~\eqref{generated_section}, the inclusion $\cube(S)\subset\cube(\tilde{S})$ holds. Therefore, $\vol{k}\cube(S)\leq\vol{k}\cube(\tilde{S})$. Using \Href{Lemma}{lem:n_s_cond_cube} and \Href{Lemma}{lem:determinant_for_substitution}, we obtain
		$$1\geq\det A_{\tilde{S}}=(1+|B_{S\setminus v}w|^2)(1-|v|^2).$$
	Again, by \Href{Lemma}{lem:geom_sense_B_S_v}, the right-hand side of this inequality is at least $(1+|w|^2)(1-|v|^2).$ It follows that
		$$|v|^2\geq\frac{|w|^2}{1+|w|^2}\geq\frac{k}{n+k}.$$
This completes the proof.
\end{proof}
Clearly, \Href{Theorem}{thm:squared_length_estimate_frame}
 implies  \Href{Theorem}{thm:squared_length_estimate}.

These theorems can be sharpened in the planar case.
\begin{lemma}\label{lem:squared_lengths_best_bounds}
Let  $S \in \Omega(n,2)$ be a maximizer of \eqref{eq:main_problem_cube} for $k = 2$ and  $n \geq 3,$    let $v\in S.$ Then
	\begin{equation}\label{eq:vec_lengths_planar_case}
		\frac{2}{n+1} \leq |v|^2 \leq \frac{2}{n-1}.
	\end{equation}
\end{lemma}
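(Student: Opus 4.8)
The plan is to mimic the proof of \Href{Theorem}{thm:squared_length_estimate_frame} but to extract a sharper inequality by treating the auxiliary vector more carefully. Let $S \in \Omega(n,2)$ be a maximizer and $v \in S$. Since $\sum_{w \in S} |w|^2 = k = 2$, the average squared length is $2/n$; the key point in dimension $k=2$ is that, once we single out $v$, the \emph{remaining} $n-1$ vectors have squared lengths summing to $2 - |v|^2$, so at least one of them, call it $u$, satisfies $|u|^2 \le \frac{2-|v|^2}{n-1}$, and at least one, call it $w$, satisfies $|w|^2 \ge \frac{2-|v|^2}{n-1}$. This is the extra leverage over the crude bound $|u|^2 \le k/n$ used before.

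For the upper bound: assuming $|v|^2 > 2/(n+1)$ (otherwise there is nothing to prove, since $2/(n+1) < 2/(n-1)$), pick $u \in S \setminus v$ with $|u|^2 \le \frac{2-|v|^2}{n-1} < 1$ and form $\tilde S$ by the substitution $u \to v$. As in the proof of \Href{Theorem}{thm:squared_length_estimate_frame}, $\tilde S$ is a frame, $\cube(S) \subset \cube(\tilde S)$, and \Href{Lemma}{lem:n_s_cond_cube} together with \Href{Lemma}{lem:determinant_for_substitution} and \Href{Lemma}{lem:geom_sense_B_S_v} gives
\[
1 \ge \det A_{\tilde S} = (1 + |B_{S \setminus u} v|^2)(1 - |u|^2) \ge (1 + |v|^2)(1 - |u|^2).
\]
Hence $|v|^2 \le \frac{|u|^2}{1 - |u|^2}$. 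Substituting $|u|^2 \le \frac{2-|v|^2}{n-1}$ and solving the resulting inequality for $|v|^2$ should collapse to $|v|^2 \le \frac{2}{n-1}$; one checks that the function $t \mapsto t/(1-t)$ is increasing, so the bound on $|u|^2$ propagates, and the algebra reduces to a quadratic in $|v|^2$ that factors nicely. Symmetrically, for the lower bound, if $|v|^2 < 2/(n+1)$ pick $w \in S \setminus v$ with $|w|^2 \ge \frac{2-|v|^2}{n-1} > 2/(n+1) > |v|^2$ and form $\tilde S$ by $v \to w$; the same three lemmas yield $1 \ge (1 + |w|^2)(1 - |v|^2)$, so $|v|^2 \ge \frac{|w|^2}{1 + |w|^2}$, and feeding in $|w|^2 \ge \frac{2-|v|^2}{n-1}$ and solving gives $|v|^2 \ge \frac{2}{n+1}$.

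The main obstacle I anticipate is purely computational bookkeeping rather than conceptual: one must verify that the substituted bounds on $|u|^2$ (resp. $|w|^2$) genuinely close the loop, i.e. that the quadratic inequality one obtains for $|v|^2$ has $\frac{2}{n-1}$ (resp. $\frac{2}{n+1}$) as the relevant root and that the inequality points the right way. A secondary subtlety is ensuring the preliminary case split is set up so that whenever we invoke the argument we actually have $|u|^2 < 1$ (needed for \Href{Lemma}{lem:geom_sense_B_S_v}) and the strict inequality $|w|^2 > |v|^2$ (so that the substitution $v \to w$ indeed enlarges $\cube(S)$); both follow from $n \ge 3$ and the case assumptions, but should be stated explicitly.
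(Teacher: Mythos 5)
Your overall strategy --- refining the substitution argument of Theorem~\ref{thm:squared_length_estimate_frame} by using the sharper pigeonhole bound $|u|^2\le\frac{2-|v|^2}{n-1}$ on the remaining $n-1$ vectors --- does not close: the ``computational bookkeeping'' you deferred is exactly where it fails. Write $x=|v|^2$. For the upper bound your chain gives $x\le\frac{(2-x)/(n-1)}{1-(2-x)/(n-1)}=\frac{2-x}{n-3+x}$, i.e.\ $x^2+(n-2)x-2\le 0$. Evaluating the left-hand side at $x=\frac{2}{n-1}$ gives $\frac{6-2n}{(n-1)^2}$, which is negative for $n\ge 4$; hence $\frac{2}{n-1}$ lies strictly below the positive root of the quadratic, and the constraint only yields $x\le\frac{-(n-2)+\sqrt{(n-2)^2+8}}{2}>\frac{2}{n-1}$ (for $n=5$ this is $x\le 0.56\ldots$ instead of the claimed $0.5$). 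The lower bound fares worse: $x\ge\frac{2-x}{n+1-x}$ gives $x^2-(n+2)x+2\le0$, and plugging in $x=\frac{2}{n+1}$ gives $\frac{2-2n}{(n+1)^2}<0$ for all $n\ge 2$, so $\frac{2}{n+1}$ lies strictly between the two roots and the constraint only forces $x$ above the smaller root, which is strictly less than $\frac{2}{n+1}$. Your refinement therefore does improve on the $\frac{2}{n\pm 2}$ bounds implicit in Theorem~\ref{thm:squared_length_estimate_frame}, but it stops short of \eqref{eq:vec_lengths_planar_case}; a better choice of the auxiliary vector cannot repair this, because the pigeonhole estimate is already used at its worst case and the fixed point of the resulting self-improvement is exactly the insufficient root above.

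The missing idea is a comparison across \emph{different numbers of vectors} rather than a substitution within $\Omega(n,2)$. The paper removes $v$ from $S$ (respectively, concatenates a second copy of $v$ to $S$) to obtain a frame of $n-1$ (resp.\ $n+1$) vectors, and compares $\cube(S\setminus v)\supset\cube(S)$ (resp.\ $\cube(\tilde S)=\cube(S)$) with a maximizer $S'$ of \eqref{eq:main_problem_cube} in $\Omega(n-1,2)$ (resp.\ $\Omega(n+1,2)$). Lemma~\ref{lem:n_s_cond_cube} applied to $S'$ gives $\area\cube(S\setminus v)/\area\cube(S')\le(1-|v|^2)^{-1/2}$, Ball's inequality \eqref{eq:cube_vol_upper_bound} bounds $\area\cube(S')\le 2(n-1)$, and the rectangle construction of Lemma~\ref{lem:plane_extremizers_cube} gives $\area\cube(S)\ge 4\sqrt{\lceil n/2\rceil\lfloor n/2\rfloor}$; combining these yields $|v|^2\ge 1-\bigl(\tfrac{n-1}{2\sqrt{\lceil n/2\rceil\lfloor n/2\rfloor}}\bigr)^2\ge\tfrac{2}{n+1}$, and dually for the upper bound. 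The extra input your argument lacks is precisely this pair of global facts: Ball's bound for $n\mp1$ vectors and the sharp lower bound on $\area\cube(S)$.
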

\begin{proof}
	By \Href{Lemma}{lem:plane_extremizers_cube},  we have
	\begin{equation}\label{eq:our_bound_cube_planar}
		\area \cube(S) \geq  4\sqrt{\lceil n/2\rceil\lfloor n/2\rfloor}.
	\end{equation}
	Let us prove  the leftmost inequality in \eqref{eq:vec_lengths_planar_case}. It is trivial if $|v| \geq 2/n.$ Assume that $|v| <  2/n .$ Let $S'\in\Omega(n-1,2)$ be a maximizer of~\eqref{eq:main_problem_cube}. By Ball's inequality \eqref{eq:cube_vol_upper_bound}, we have
	\begin{equation}\label{eq:balls_2dim_cube}
		\area \cube(S') \leq 2(n-1). 
	\end{equation}
	Consider $S \setminus v.$ It is a frame by \Href{Lemma}{lem:geom_sense_B_S_v}.     Then, by \Href{Lemma}{lem:n_s_cond_cube} and \Href{Lemma}{lem:determinant_for_substitution}, we get
		$$\frac{\area\cube(S \setminus v)}{\area\cube(S')}\leq\frac{1}{\sqrt{\det A_{S\setminus v}}}=\frac{1}{\sqrt{1-|v|^2}}.$$
	By identity~\eqref{generated_section}, we have $\cube(S)\subset\cube(S \setminus v)$. By this and by inequalities \eqref{eq:balls_2dim_cube} and \eqref{eq:our_bound_cube_planar}, we get
		$$\frac{\area\cube(S \setminus v)}{\area\cube(S')} \geq \frac{\area\cube(S)}{\area\cube(S')}\geq \frac{4\sqrt{\lceil n/2\rceil\lfloor n/2\rfloor}}{2(n-1)}.$$
	Combining the last two  inequalities, we obtain
		$$|v|^2 \geq 1-\left(\frac{\area\cube(S')}{\area\cube(S)}\right)^2\geq 1-\left(\frac{n-1}{2\sqrt{\lceil n/2\rceil\lfloor n/2\rfloor}}\right)^2\geq\frac{2}{n+1}.$$
		
	We proceed with the rightmost inequality in \eqref{eq:vec_lengths_planar_case}. Let $S'\in\Omega(n+1,2)$ be a maximizer of~\eqref{eq:main_problem_cube}. 
	By Ball's inequality \eqref{eq:cube_vol_upper_bound}, we have
	\begin{equation}\label{eq:balls_2dim_cube_upper}
		\area \cube(S') \leq 2(n+1). 
	\end{equation} 
	We use $\tilde{S}$ to denote the $(n+1)$-tuple  obtained from $S$ by concatenating $S$ with the vector $v.$ Since $S$ is a frame in $\R^2$, $\tilde{S}$ is a frame in $\R^2$ as well. By \Href{Lemma}{lem:n_s_cond_cube} and \Href{Lemma}{lem:determinant_for_substitution}, we get
		$$\frac{\area\cube(\tilde{S})}{\area\cube(S')}\leq\frac{1}{\sqrt{\det A_{\tilde{S}}}}=\frac{1}{\sqrt{1+|v|^2}}.$$
	By identity~\eqref{generated_section}, we have $\cube(S)=\cube(\tilde{S}).$  By this and by inequalities \eqref{eq:balls_2dim_cube_upper} and \eqref{eq:our_bound_cube_planar}, we get 
	$$\frac{\area\cube(S)}{\area\cube(S')}=
			\frac{\area\cube(\tilde{S})}{\area\cube(S')} \geq 
			\frac{4\sqrt{\lceil n/2\rceil\lfloor n/2\rfloor}}{2(n+1)}.$$
	Combining the last two inequality, we obtain
		$$|v|^2\leq\left(\frac{\area\cube(S')}{\area\cube(S)}\right)^2-1\leq\left(\frac{n+1}{2\sqrt{\lceil n/2\rceil\lfloor n/2\rfloor}}\right)^2-1\leq\frac{2}{n-1}.$$
\end{proof}
\begin{remark}
It is possible to sharpen inequality \eqref{eq:squared_length_estimate} for $k > 2$ and $n > 2k$ using the same approach as in \Href{Lemma}{lem:squared_lengths_best_bounds}.
The idea is to remove $n \!\!\mod k$ from or add $ n - (n \!\!\mod k)$ vectors to a maximizer and compare the volume of a section of the cube generated by the new frame with the Ball bound \eqref{eq:cube_vol_upper_bound}.
However, it doesn't give a substantial improvement.
\end{remark}

\section{Local properties}
In this section, we prove some properties of the local maximizers of \eqref{eq:main_problem_cube}. We will perturb  facets of $\cube(S)$ of a local maximizer $S$ (that is, we will perturb the vectors of $S$ in a specific way corresponding to a perturbation of some facets of the polytope $\cube(S)$).
To this end, we need to recall some general properties  of  polytopes connected to  perturbations of  a half-space supporting  a polytope in its facet.
\subsection{Properties of polytopes}
Recall that a point $c$ is the \textit{centroid} of a facet $F$ of a polytope $P \subset \R^k$ if 
\begin{equation}
\label{eq:centroid_of_a_facet}
c = \frac{1}{\vol{k-1} F} \int\limits_{F} x d\lambda,
\end{equation}

where $d\lambda$ is the standard Lebesgue measure on the hyperplane containing $F.$

For a set $W \subset \R^k,$ we use $P(W)$ to denote the polytopal set $ \bigcap\limits_{w \in W} H_w^{+}.$ 
Let $W$ be a set of pairwise distinct vectors such that
\begin{itemize}
\item  the set $P(W)$ is a polytope;
\item  for every $w \in W,$ the hyperplane $H_w$ supports $P(W)$ in a facet of $P(W).$
\end{itemize} 
That is, $W$ is the set of scaled outer normals of $P(W).$  
Denote $P = P(W).$
We fix $w \in W$ and the facet $F =  P \cap H_w$ of $P.$
Let $c$ be the centroid of $F.$

\begin{transformation}\label{trans:shift_facet}
We will ``shift'' a facet of a polytope parallel to itself.
Let $W'$  be obtained  from $W$ by substitution $w \to w + h \frac{w}{|w|},$ where $h \in \R.$ Denote $P' = P(W').$ That is, the polytopal set $P'$ is obtained from $P$ by the shift  of the half-space $H_w^+$ by $h$ in the direction of  its  outer normal.  
By the celebrated Minkowski existence  and uniqueness theorem for convex polytopes (see, for example,  \cite[Theorem 18.2]{gruber}), we have
	\begin{equation}\label{eq:vol_normal_variation}
		\vol{k} P'-\vol{k} P=h\vol{k-1}F+o(h).
	\end{equation}
	\begin{figure}[!ht]
		\includegraphics[scale=1.2]{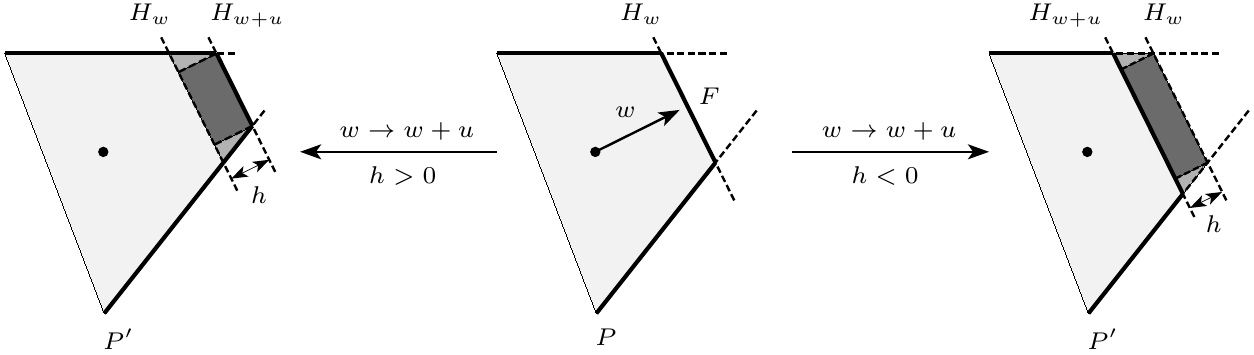}
		\caption{Parallel shift of the facet $F$ by the vector $u=hw/|w|$}\label{fig:facet_shifting}
	\end{figure}
\end{transformation}

\begin{transformation}\label{trans:rotate_facet}
We will rotate a facet around a codimension two subspace.
Let $u$ be a unit vector orthogonal to $w.$ 
Define $c_w = H_w \cap \span \{ w \}$ and $L_u = H_w \cap (u^\perp + c_w).$ Note that $L_u$ is a codimension two affine subspace of $\R^k$ and an affine hyperplane in $H_w.$ Clearly, for any non-zero $t \in \R,$ $L_u  = H_w \cap H_{w + tu} = \left\{x \in H_w \mid \iprod{x-c(w)}{u}=0 \right\}.$
Thus, $L_u$ divides $F$ into two parts \[
F^+ = F \cap \left\{x \in H_w \mid \iprod{x-c(w)}{u} \geq 0 \right\}
\;\text{ and  } \;
F^- = F \cap \left\{x \in H_w \mid \iprod{x-c(w)}{u} \leq 0 \right\}
\] 
(one of the sets $F^+$ or $F^-$ is empty if $c(w) \notin F$).
Let $\alpha \in (-\pi/2, \pi/2)$ be the oriented angle  between hyperplanes $H_{w}$ and $H_{w+tu}$ such that $\alpha$ is positive for positive $t.$

Let $W'$  be obtained  from $W$ by substitution $w \to w + t u,$ where $t \in \R$ and $u$ is a unit vector orthogonal to $w.$ Denote $P' = P(W').$ Thus, for a sufficiently small $|t|,$ the polytopal set $P'$ is a polytope obtained from $P$ by the rotation of the half-space  $H_w^+$ around the codimension two affine subspace $L_u$ by some angle $\alpha = \alpha(t).$
Clearly, in order to calculate the volume of $P',$ we need to subtract from $\vol{k} P$ the volume of the subset of $P$ that is  above $H_{w+tu}$ and to add to   $\vol{k} P$ the volume of the subset of $P'$ that is above $H_w.$  Formally speaking, 
denote 
\[
 Q^+ = \begin{cases}
P' \cap \left(\R^k \setminus H_w^+\right) & \text{for }\;\; \alpha \geq 0 \\
P \cap \left(\R^k \setminus H_{w+tu}^+\right) & \text{for }\;\; \alpha < 0
\end{cases}
\;\text{ and } \;
 Q^- = \begin{cases}
P \cap \left(\R^k \setminus H_{w+tu}^+\right) & \text{for }\;\; \alpha \geq 0 \\
P' \cap \left(\R^k \setminus H_{w}^+\right) & \text{for }\;\; \alpha < 0
\end{cases}
\] 
Then, we have (see  \Href{Figure}{fig:facet_rotation})
\begin{equation}
\label{eq:operation_polytope_2_delta_vol_1}
\vol{k} P'-\vol{k} P = \operatorname{sign} \alpha 
\left( \vol{k} Q^+ -\vol{k} Q^-\right).
\end{equation}

There is a nice approximation for $\vol{k} Q^+ -\vol{k} Q^-.$
Let $C_\alpha^+$ (resp., $C_\alpha^-$) be the set swept out by  
$F^{+}$ (resp., $F^-$) while rotating around $L_u$ by the angle $\alpha.$
By  routine, 
\[
\vol{k} C_\alpha^{+} = |\alpha| \int\limits_0^{+\infty} r \vol{k-2}
\left(F \cap (L_u + r u) \right) dr
\]
\[
\left( \text{resp.,} \quad 
\vol{k} C_\alpha^{-} = |\alpha| \int\limits_0^{+\infty} r \vol{k-2}
\left(F \cap (L_u - r u) \right) dr =
-|\alpha| \int\limits_{-\infty}^{0} r \vol{k-2}
\left(F \cap (L_u + r u) \right) dr  \right). 
\]
We claim here without  proof that
\[
\vol{k} Q^+ = \vol{k} C_\alpha^{+} + o(\alpha) \quad \text{ and} \quad
\vol{k} Q^- = \vol{k} C_\alpha^{-} + o(\alpha).
\]
By this and identity \eqref{eq:operation_polytope_2_delta_vol_1}, we obtain 
\begin{equation*}
\vol{k} P'-\vol{k} P = \alpha \left(\vol{k} C_\alpha^{+} - \vol{k} C_\alpha^{-} \right) + o(\alpha) = \alpha \int\limits_{\R} r \vol{k-2}
\left(F \cap (L_u + r u) \right) dr + o(\alpha).
\end{equation*}
By this and by \eqref{eq:centroid_of_a_facet}, we get
\begin{equation*}
\vol{k} P'-\vol{k} P =\alpha  \iprod{c - c_w}{u} \vol{k-1} F   + o(\alpha).
\end{equation*}
	Since $w$ and $u$ are orthogonal, we have
	\begin{equation*}
		\alpha=\arctan\frac{|u|}{|w|}t=\frac{|u|}{|w|}t+o(t).
	\end{equation*}
Finally,  we obtain
\begin{equation}
\label{eq:operation_polytope_2_final_destination}
\vol{k} P'-\vol{k} P  = \frac{\vol{k-1} F }{|w|}  \iprod{c - c_w}{u} t + o(t).
\end{equation}
	\begin{figure}[!ht]
	\includegraphics[scale=1.2]{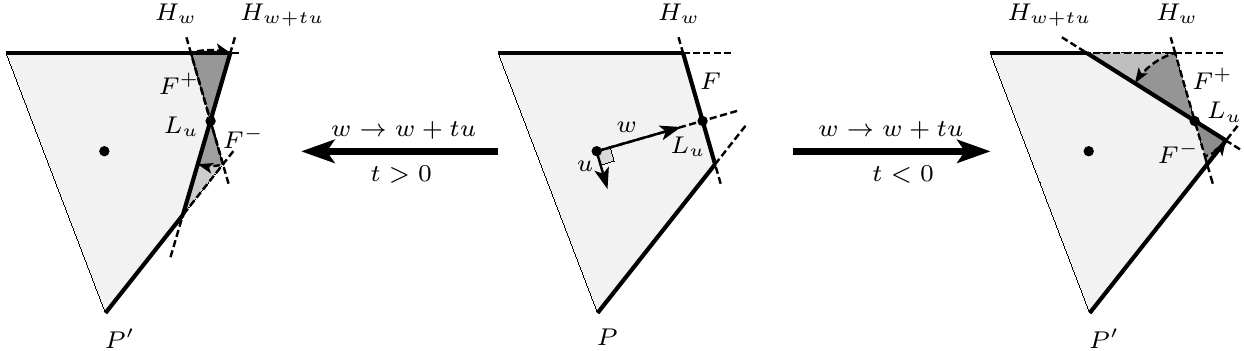}
	\caption{A rotation of the facet $F$ around $L_u$}\label{fig:facet_rotation}
	\end{figure}
\end{transformation}

\subsection{Local properties of sections of the cube}
Let $S$ be a frame in $\R^k.$
For every $v \in S,$ we denote the set  $H_v\cap \cube(S)$ by $F_v.$   We say that $v\in S$ \emph{corresponds to a facet} $F$ of $\cube(S)$ if either $F = F_v$ or $F = - F_v.$   Clearly, if some vectors of $S$ correspond to the same facet of $\cube(S),$ then they are equal up to a sign.
  For a given frame $S$ in $\R^k$ and $u \in \R^k,$ a facet $F$ of $\cube(S)$ and a vector $u \in \R^k,$ we define an $F$-substitution in the direction $u$ as follows: 
\begin{itemize}
\item each vector $v$ of $S$  such that $F \subset H_v$ is substituted by $v+u;$   
\item each vector $v$ of $S$  such that  $-F \subset H_v$ is substituted by $v-u;$   
\item all other vectors of $S$ remain the same.
\end{itemize}
In order to prove \Href{Theorem}{thm:cube_first_order_n_c}, we will use  $F$-substitutions. 

At first, we simplify the structure of a local maximizer.
\begin{lemma}\label{lem:hyperplane_meets_facet}
Let $S$ be a local maximizer of \eqref{eq:main_problem_cube} and  $v\in S.$
Then $F_v$ is a facet of $\cube(S).$
\end{lemma}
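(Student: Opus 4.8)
We want to show that for a local maximizer $S$, each slab $H_{v}^+\cap H_v^-$ ($v\in S$) actually contributes a genuine facet to $\cube(S)$, i.e. the hyperplane $H_v$ meets $\cube(S)$ in a $(k-1)$-dimensional face rather than in a lower-dimensional face or not touching the relative interior of a facet. The natural strategy is a perturbation argument via Lemma~\ref{lem:n_s_cond_cube}: if $F_v = H_v\cap\cube(S)$ were \emph{not} a facet, then the constraint $\lvert\iprod{x}{v}\rvert\le 1$ would be redundant (or only ``touch'' $\cube(S)$ in low dimension), and we could move $v$ inward — increasing its length — without changing the polytope $\cube(S)$, while the determinant factor $\det A_{\tilde S}$ strictly increases. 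That would contradict the necessary condition \eqref{eq:criterion_inequality_cube}.

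\textbf{Key steps.} First I would set $P=\cube(S)$ and suppose toward a contradiction that $F_v=H_v\cap P$ is not a facet of $P$; since $P=\bigcap_i(H_{v_i}^+\cap H_{v_i}^-)$ and $F_v$ is a face of $P$, this means $\dim F_v\le k-2$. I would then argue that in this situation the constraint pair coming from $v$ is inessential: there is a scalar $\mu>1$ such that replacing $v$ by $\mu v$ (equivalently, replacing the slab $\{\lvert\iprod{x}{v}\rvert\le1\}$ by the strictly smaller slab $\{\lvert\iprod{x}{\mu v}\rvert\le1\}$) still leaves $P$ unchanged, i.e. $\cube(\tilde S)=\cube(S)$ where $\tilde S$ is $S$ with the occurrences of $v$ replaced by $\mu v$ (and $-v$ by $-\mu v$, if present — but by the remark preceding the lemma, vectors corresponding to the same facet are equal up to sign, so there is no harm). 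The existence of such a $\mu$ follows from compactness: if $H_v\cap P$ has dimension $\le k-2$, then $\max_{x\in P}\iprod{x}{v}$ is attained on a face of dimension $\le k-2$, and one checks that the supporting function of $P$ at $v$ is locally stable under a mild contraction — more carefully, $P$ equals the intersection of the \emph{other} slabs intersected with $\{\lvert\iprod{x}{v}\rvert\le1\}$, and if the $v$-slab contributes no facet then the intersection of the other slabs already satisfies $\lvert\iprod{x}{v}\rvert\le 1$ strictly except on a set of dimension $\le k-2$; a small tilt/scaling keeps the intersection equal (this needs the polytope structure).

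\textbf{Concluding the contradiction.} Once $\cube(\tilde S)=\cube(S)$ with $\tilde S$ obtained by the substitution $v\to\mu v$ (and correspondingly $-v\to-\mu v$), $\tilde S$ is still a frame (its $A_{\tilde S}=A_S + (\mu^2-1)\,v\otimes v \succ 0$, using Lemma~\ref{lem:determinant_for_substitution}), and by Lemma~\ref{lem:determinant_for_substitution},
\[
\det A_{\tilde S} = \bigl(1+(\mu^2-1)\lvert v\rvert^2\bigr)\det A_S = 1+(\mu^2-1)\lvert v\rvert^2 > 1
\]
(if $v\ne 0$; the case $v=0$ is even easier, as then $F_v=\varnothing$ and one argues directly, or notes $v=0$ cannot occur for a frame's contribution). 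Then $\frac{\vol{k}\cube(\tilde S)}{\vol{k}\cube(S)}=1 > \frac{1}{\sqrt{\det A_{\tilde S}}}$, contradicting Lemma~\ref{lem:n_s_cond_cube}. Hence $F_v$ is a facet of $\cube(S)$.

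\textbf{Main obstacle.} The delicate point is the purely polytopal claim that a non-facet constraint can be strictly tightened without changing the polytope — i.e. producing the scalar $\mu>1$ with $\cube(\tilde S)=\cube(S)$. For a finite intersection of slabs this is intuitively clear (a redundant or lower-dimensional-contact inequality has slack on an open dense subset of $P$), but it must be stated precisely: one should show that if $H_v\cap P$ is not a facet, then $\max_{x\in P}\iprod{x}{v}<1$ fails only on a face of dimension $\le k-2$, whence $\max_{x\in P'}\iprod{x}{v}\le 1$ still holds for $P'=\bigcap_{i:v_i\ne\pm v}(H_{v_i}^+\cap H_{v_i}^-)$, and then choose $\mu$ so that $\{\lvert\iprod{x}{\mu v}\rvert\le1\}\supset P'$. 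I expect this to be the step requiring the most care, though it is standard convex geometry; everything else is a direct application of Lemmas~\ref{lem:n_s_cond_cube} and~\ref{lem:determinant_for_substitution}.
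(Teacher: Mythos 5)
Your overall strategy (perturb $S$ so that $\cube(S)$ is unchanged, then contradict Lemma~\ref{lem:n_s_cond_cube} via Lemma~\ref{lem:determinant_for_substitution}) is the right one, but the key polytopal step fails as stated. You need a $\mu>1$ with $\cube(\tilde{S})=\cube(S)$ after the substitution $v\to\mu v$, and this requires $\max_{x\in P}\iprod{x}{v}\le 1/\mu<1$, which is only possible when $H_v\cap P=\emptyset$. The problematic case is the one where $H_v$ supports $P$ in a nonempty face of dimension between $0$ and $k-2$ (a vertex, an edge, \dots): then $\max_{x\in P}\iprod{x}{v}=1$ exactly, so every $\mu>1$ cuts off a region of positive volume around that face, and $\cube(\tilde{S})\subsetneq\cube(S)$ with $\vol{k}\cube(\tilde{S})<\vol{k}\cube(S)$. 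A concrete instance of the phenomenon: for $P=[-1,1]^2$ the valid inequality $\tfrac12(x_1+x_2)\le 1$ defines no facet and is redundant (removable), yet it cannot be tightened at all without losing the vertex $(1,1)$. ``Redundant'' and ``strictly tightenable'' are not the same thing, and your argument conflates them. (The case $v=0$ also needs separate treatment, since scaling does nothing there, and $v=0$ genuinely occurs in tight frames, e.g.\ when projecting the standard basis of $\R^3$ onto a coordinate plane.)

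The gap is repairable, but not by exact equality: in the bad case the region you cut off is a cap around a face of dimension $\le k-2$, hence of volume of order $(\mu-1)^{k-\dim F_v}=o(\mu-1)$, while Lemma~\ref{lem:determinant_for_substitution} gives $1/\sqrt{\det A_{\tilde{S}}}=1-|v|^2(\mu-1)+o(\mu-1)$; comparing first-order terms still contradicts \eqref{eq:criterion_inequality_cube} when $v\ne0$. The paper sidesteps all of this with a cleaner duality argument: $\co\{\pm S\}$ is the polar body of $\cube(S)$, so $F_v$ is a facet iff $v$ is a vertex of $\co\{\pm S\}$; if it is not, one moves $v$ along the segment towards a suitable vertex $u\in S$ of $\co\{\pm S\}$ with $|B_{S\setminus v}u|>|B_{S\setminus v}v|$. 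Since that segment stays inside $\co\{\pm(S\setminus v)\}$, the polar body --- hence $\cube(S)$ --- is exactly unchanged, which covers in one stroke the empty-face case, the low-dimensional-face case, and $v=0$, while $\det A_{\tilde{S}}>1$ gives the contradiction. You should either adopt the polarity argument or carry out the second-order volume estimate explicitly.
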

\begin{proof}
Let
$K$ be a convex body in $\R^k,$ then its polar body  is defined by
\[ 
 \left\{ y\in\R^k : \iprod{y}{x} \leq 1 \text{ for all } x \in K \right\}.
\]
Since $\cube(S)$ is the intersection of half-spaces of the form
$\left\{\iprod{w}{x} \leq 1 \right\}$ with $w \in \pm S,$
we have that $\co \{\pm S\}$ is polar to $\cube(S).$
By the duality argument, it suffices to prove that  $v\in S$ is a vertex of the polytope $\co\left\{ \pm S\right\}.$ Assume that $v$ is not a vertex of $\co\left\{ \pm S\right\}.$ 

Clearly, $v \in \co \left\{\pm \left(S \setminus v\right)\right\}$ and $v$ is not a vertex of the polytope $\co \left\{\pm \left(S \setminus v\right)\right\}.$
Therefore we have that  $\span \{S\setminus v \} = \span S = \R^k.$ That is, $S \setminus v$ is a frame in $\R^k.$ 
Since $B_{S \setminus v}$ is a nondegenerate linear transformation, $B_{S \setminus v} v$ is not a vertex of the polytope 
$ \co \left\{\pm B_{S \setminus v}(S \setminus v)\right\}.$
By this and by the triangle inequality,  there is a vertex  $u$ of 
$ \co  \{\pm S\}$ such that $ u\in S$ and  $|B_{S\setminus v}v|<|B_{S\setminus v}u|$.

Denote by $\tilde{S}$ the $n$-tuple obtained from $S$ by substitution $v\to v+t(u-v)$, where $t\in(0,1]$. Since $A_{\tilde{S}}\geq A_{S\setminus v}>0$, $\tilde{S}$ is a frame in $\R^k$. By the choice of $u$ and identity~\eqref{generated_section}, we have $\cube(\tilde{S})=\cube(S)$. Hence $\vol{k}\cube(\tilde{S})=\vol{k}\cube(S)$. \Href{Lemma}{lem:n_s_cond_cube} implies that $\det A_{\tilde{S}}\leq 1$.

On the other hand, by \Href{Lemma}{lem:determinant_for_substitution}, we have
	$$\det A_{\tilde{S}}=(1+|B_{S\setminus v}(v+t(u-v))|^2)(1-|v|^2).$$
Inequality $|B_{S\setminus v}v|<|B_{S\setminus v}u|$ implies that $|B_{S\setminus v}v|<|B_{S\setminus v}(v+t(u-v))|$. By this and by \Href{Lemma}{lem:geom_sense_B_S_v}, we conclude that $\det A_{\tilde{S}}>1$. This is a contradiction. Thus, $v$ is a vertex of $\co\{ \pm S\}.$ The lemma is proven.
\end{proof}

As an immediate corollary of \Href{Lemma}{lem:hyperplane_meets_facet} and by the standard properties of polytopes, we have the following statement.

\begin{cor}\label{cor:perturbations_explained}
	Let $S$ be a local maximizer of \eqref{eq:main_problem_cube} and  $v \in S.$ Let $\tilde{S}$ be the $n$-tuple obtained from $S$ by $F_v$-substitution in the direction $t u$ with $t\in \R$ and $u \in \R^k.$
	Then, for a sufficiently small $|t|,$ $\tilde{S}$ is a frame, the vector $v + ut$ corresponds to a facet of $\cube(\tilde{S})$ and the polytopes $\cube(S)$ and $\cube (\tilde{S})$ have the same combinatorial structure. Moreover, $\vol{k} \cube(\tilde{S})$ is a smooth function of $t$ at $t=0.$
\end{cor}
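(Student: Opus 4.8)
The plan is to verify the four assertions in turn --- that $\tilde S$ is a frame, that $v+tu$ is the scaled outer normal of a facet of $\cube(\tilde S)$, that $\cube(S)$ and $\cube(\tilde S)$ are combinatorially equivalent, and that $\vol{k}\cube(\tilde S)$ is smooth at $t=0$ --- each step feeding into the next; throughout I write $P=\cube(S)$ and use that $A_S=I_k$ since $S$ is a tight frame. First I would check that $\tilde S$ is a frame and that $\cube(\tilde S)$ is bounded. The operator $A_{\tilde S}$ is obtained from $A_S=I_k$ by replacing every rank-one summand $v\otimes v$ coming from a copy of $v$ in $S$, and every summand $(-v)\otimes(-v)=v\otimes v$ coming from a copy of $-v$, by $(v+tu)\otimes(v+tu)$; hence $A_{\tilde S}=I_k+tB_1+t^2B_2$ for fixed operators $B_1,B_2$, so $t\mapsto A_{\tilde S}$ is continuous with value $I_k$ at $t=0$. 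Since positive definiteness is an open condition, $A_{\tilde S}$ is positive definite --- in particular $\span\tilde S=\R^k$, i.e. $\tilde S$ is a frame --- for all sufficiently small $|t|$. Moreover $\cube(\tilde S)=\bigcap_{w\in\pm\tilde S}H_w^+$ then contains no ray, since a recession direction $x$ would satisfy $\iprod{w}{x}\le 0$ for every $w\in\pm\tilde S$, hence $\iprod{w}{x}=0$ for every $w\in\tilde S$, forcing $x=0$; so $\cube(\tilde S)$ is a genuine (bounded) polytope.

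Next I would treat the facet claim and the combinatorial equivalence together. By \Href{Lemma}{lem:hyperplane_meets_facet}, applied to the local maximizer $S$, every hyperplane $H_w$ with $w\in\pm S$ supports $P$ in a facet, so the presentation $P=\bigcap_{w\in\pm S}H_w^+$ is irredundant and $\pm S$ is the list of scaled outer facet normals of $P$. By construction $\cube(\tilde S)$ is obtained from $P$ by moving the pair of facet hyperplanes $H_{\pm v}$ to $H_{\pm(v+tu)}$ and keeping all the others, so I would invoke the standard stability of an irredundant facet presentation under small perturbations of the normals: for each facet $F_w=P\cap H_w$, pick $k$ affinely independent points in its relative interior (where every inactive inequality is strict); for small $|t|$ these points can be moved onto the perturbed hyperplane while staying affinely independent and inside $\cube(\tilde S)$, which shows that each $H_w$ --- in particular $H_{v+tu}$ --- supports a facet of $\cube(\tilde S)$, so $v+tu$ corresponds to a facet, and that facets of $P$ and of $\cube(\tilde S)$ are in natural bijection. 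Running the same argument on vertices --- each vertex of $P$ solves the $k$-subsystem of facet equations through it, which perturbs to a unique nearby point still lying in $\cube(\tilde S)$, and no new incidences appear --- would give a canonical isomorphism of the face lattices.

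For smoothness, once the combinatorial type is fixed I would take a triangulation of $P$ with vertex set the vertices of $P$; the combinatorially identical triangulation, with vertices replaced by the perturbed ones, triangulates $\cube(\tilde S)$ for small $|t|$. Each perturbed vertex solves a linear system $\iprod{w_i(t)}{x}=1$ over a fixed index set whose matrix is affine in $t$ and invertible at $t=0$, so by Cramer's rule it is a rational, hence $C^\infty$, function of $t$ near $0$; since orientations do not flip for small $|t|$, $\vol{k}\cube(\tilde S)$ is a finite sum of terms $\pm\frac{1}{k!}\det(\cdots)$ over the simplices, again a rational function of $t$, smooth at $t=0$.

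The hard part will be the combinatorial-equivalence step. The combinatorial type of a polytope is only locally constant where its facet presentation is irredundant --- exactly what \Href{Lemma}{lem:hyperplane_meets_facet} guarantees here --- but one still has to rule out a vertex of $P$ lying on $F_v$ or $-F_v$ being shaved off by the perturbation. This is where the local-maximizer hypothesis genuinely enters, through the structure it forces on the tangent cones of $P$ along those two facets, and I would organize that part of the proof around those cones rather than treating $P$ as an abstract polytope; once combinatorial stability is in hand, the remaining assertions reduce to routine continuity and linear algebra.
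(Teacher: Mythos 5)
Your plan correctly handles the frame property (positive definiteness of $A_{\tilde S}$ is open, so $\tilde S$ is a frame and $\cube(\tilde S)$ is a bounded polytope for small $|t|$) and the facet claim (the relative-interior-point argument shows each hyperplane of the irredundant presentation, including the perturbed one, still cuts out a facet). But the central assertion you build the rest on --- the full combinatorial equivalence of $\cube(S)$ and $\cube(\tilde S)$ --- is exactly the step you leave open, and it is a genuine gap. Your vertex argument (``each vertex solves the $k$-subsystem of facet equations through it'') works only at simple vertices; at a vertex of $F_v$ lying on more than $k$ facets the system is overdetermined and a generic perturbation of $H_v$ splits the vertex, changing the face lattice. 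Nothing established about local maximizers in the paper (Lemma~\ref{lem:hyperplane_meets_facet}, Lemma~\ref{lem:intersecting_in_barycenter}) controls the simplicity of vertices, so your proposed fix via ``the structure the local-maximizer hypothesis forces on the tangent cones'' is an unsubstantiated hope rather than an argument. Since your smoothness proof (the combinatorially identical triangulation plus Cramer's rule) is layered on top of this unproven step, it inherits the gap.

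For comparison, the paper offers no proof at all: it declares the corollary ``an immediate corollary of Lemma~\ref{lem:hyperplane_meets_facet} and the standard properties of polytopes.'' What the paper actually uses downstream is weaker than what the corollary states: only that $v+tu$ still corresponds to a facet (which your relative-interior argument delivers) and that $\vol{k}\cube(\tilde S)$ admits a first-order Taylor expansion at $t=0$. The latter does not require combinatorial stability --- it follows from the first-variation formulas \eqref{eq:vol_normal_variation} and \eqref{eq:operation_polytope_2_final_destination} of Transformations~\ref{trans:shift_facet} and~\ref{trans:rotate_facet}, which are derived for an arbitrary polytope by decomposing $u$ into its components along $v$ and orthogonal to $v$. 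Rerouting your smoothness argument through those formulas, rather than through a triangulation that presupposes a fixed combinatorial type, would let you prove everything the paper needs without resolving the non-simple-vertex issue; as written, your proposal does not close it.
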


In the following two lemmas, we will perturb a local maximizer by making $F$-substitutions. Geometrically speaking, making an $F$-substitution in the direction $tu$  with $u \in \R^k $ and  $t \in \R,$ we move the opposite facets $F$ and $-F$ of a local maximizer in a symmetric way. Thus, for a sufficiently small $t,$  perturbations  of the facets $F$ and $-F$ are independent.

\begin{lemma}\label{lem:necessary_condition_first_order}
Let $S$ be a local maximizer of \eqref{eq:main_problem_cube}. Let  $v\in S$ and  $d$ be the number of the vectors of $S$ that correspond to $F_v.$ Then
	\begin{equation}\label{eq:necessary_condition_first_order}
		\frac{2}{|v|}\vol{k-1} F_v=d|v|^2\vol{k}\cube(S).
	\end{equation}
\end{lemma}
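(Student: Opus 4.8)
The plan is to apply the criterion of Lemma~\ref{lem:n_s_cond_cube} with a carefully chosen perturbation $\tilde S$, namely the $F_v$-substitution in the direction $tv$ itself (so we scale all vectors corresponding to the facet $F_v$ by the factor $(1+t)$ and its antipode by the same factor). By Corollary~\ref{cor:perturbations_explained}, for small $|t|$ this $\tilde S$ is a frame with $\cube(\tilde S)$ combinatorially equivalent to $\cube(S)$, and $\vol{k}\cube(\tilde S)$ is smooth in $t$ at $t=0$. Since $S$ is a local maximizer, Lemma~\ref{lem:n_s_cond_cube} gives $\vol{k}\cube(\tilde S)/\vol{k}\cube(S)\le 1/\sqrt{\det A_{\tilde S}}$ for all small $t$, and both sides equal $1$ at $t=0$; hence the derivatives in $t$ at $t=0$ must agree. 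The proof then reduces to computing the two first-order coefficients.

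\textbf{Computing the right-hand side.} The perturbation replaces each of the $d$ vectors $\pm v$ (there are $d$ of them, counted with the appropriate signs) by $\pm(1+t)v$. Thus $A_{\tilde S} = A_S + \big((1+t)^2-1\big)\, d\, (v\otimes v) = I_k + (2t+t^2)d\,(v\otimes v)$. By Lemma~\ref{lem:determinant_for_substitution} (with $A=I_k$), $\det A_{\tilde S} = 1 + (2t+t^2)d|v|^2$, so $\sqrt{\det A_{\tilde S}} = 1 + t\,d|v|^2 + o(t)$ and therefore $1/\sqrt{\det A_{\tilde S}} = 1 - t\,d|v|^2 + o(t)$. (Alternatively one invokes Lemma~\ref{lem:der_det_identity} directly with $x_i = v$ for the $d$ relevant indices.)

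\textbf{Computing the left-hand side.} Geometrically, scaling $v\mapsto (1+t)v$ shifts the facet $F_v=\{x: \iprod{x}{v}=1\}\cap\cube(S)$ toward the origin: the supporting hyperplane moves from $\iprod{x}{v}=1$ to $\iprod{x}{(1+t)v}=1$, i.e. a parallel shift by $h = -t|v|^{-1}+o(t)$ in the direction of the outer normal $v/|v|$ (since the distance of $H_v$ from the origin is $1/|v|$). The antipodal facet $-F_v$ shifts symmetrically, also reducing the volume. Applying Transformation~\ref{trans:shift_facet} (equation~\eqref{eq:vol_normal_variation}) to each of the two facets and adding,
\[
\vol{k}\cube(\tilde S) - \vol{k}\cube(S) = 2h\,\vol{k-1}F_v + o(t) = -\frac{2t}{|v|}\vol{k-1}F_v + o(t).
\]
Dividing by $\vol{k}\cube(S)$, the left-hand side of the criterion is $1 - \frac{2t}{|v||\cube(S)|}\vol{k-1}F_v + o(t)$ where $|\cube(S)| := \vol{k}\cube(S)$.

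\textbf{Conclusion and the main obstacle.} Matching first-order coefficients: since the criterion inequality $\text{LHS}\le\text{RHS}$ holds for all small $t$ of both signs and is an equality at $t=0$, the linear coefficients coincide, giving
\[
-\frac{2}{|v|\,\vol{k}\cube(S)}\vol{k-1}F_v = -d|v|^2,
\]
which is exactly \eqref{eq:necessary_condition_first_order}. The step I expect to require the most care is the geometric bookkeeping of the left-hand side: verifying that the perturbation $v\mapsto(1+t)v$ really corresponds to a \emph{parallel} shift of $F_v$ by exactly $-t/|v|$ (computing the distance of $H_v$ and $H_{(1+t)v}$ from the origin correctly), and — crucially — checking that the two facets $F_v$ and $-F_v$ contribute additively and with the same sign, which relies on Corollary~\ref{cor:perturbations_explained} guaranteeing that no combinatorial change occurs and that the two perturbations are independent for small $t$. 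One also needs $v\neq 0$ and $|v|<1$ (so that $F_v$ is a genuine facet, supplied by Lemma~\ref{lem:hyperplane_meets_facet}, and $S\setminus v$ is a frame), and the fact that \emph{every} vector corresponding to $F_v$ is scaled, which is why the factor $d$ appears on the right.
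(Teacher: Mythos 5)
Your proposal is correct and follows essentially the same route as the paper's proof: the $F_v$-substitution in the direction $tv$, the determinant expansion $\det A_{\tilde S}=1+d(2t+t^2)|v|^2$ via Lemma~\ref{lem:determinant_for_substitution}, the parallel shift $h=-t/|v|+o(t)$ of both facets $\pm F_v$ via Transformation~\ref{trans:shift_facet}, and matching of first-order coefficients in the criterion of Lemma~\ref{lem:n_s_cond_cube}. The points you flag as delicate (the factor $d$, the additive contribution of the two opposite facets, smoothness from Corollary~\ref{cor:perturbations_explained}) are exactly the ones the paper handles the same way.
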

\begin{proof}
	Denote by $\tilde{S}$ the $n$-tuple obtained from $S$ by $F_v$-substitution in the direction $tv$ with $t \in \R.$ 
	Thus, we apply \Href{Transformation}{trans:shift_facet}  to the facets $\pm F_v$ of $\cube(S).$    
	 By \Href{Lemma}{lem:n_s_cond_cube}, we have 
\begin{equation}
\label{eq:ns_cond_lem_shift_plane}
	\frac{\vol{k}\cube(\tilde{S})}{\vol{k} \cube(S)}\leq\frac{1}{\sqrt{\det A_{{\tilde{S}}}}}.
\end{equation}

By \Href{Lemma}{lem:der_det_identity} and  \Href{Corollary}{cor:perturbations_explained},  both sides of this inequality are smooth as functions of $t$ in a sufficiently small neighborhood of  $t_0 =0.$
	Consider the Taylor expansions of both sides of  inequality \eqref{eq:ns_cond_lem_shift_plane} as functions of $t$ about $t_0=0.$ 
	 
	By \Href{Lemma}{lem:determinant_for_substitution}, $\det A_{\tilde{S}}=1+d(2t+t^2)|v|^2$. Hence
	\begin{equation}\label{eq:det_first_order_shrinking}
		\frac{1}{\sqrt{\det A_{\tilde{S}}}}=1-td|v|^2+o(t).
	\end{equation} 
Geometrically speaking, we shift the half-space $H_v^+$ (resp., $H_v^-$) by
	\[
	h=\frac{1}{|(1+t)v|} - \frac{1} {|v|} = - \frac{t}{|v|}+o(t)
	\]
	in the directions of its outer normal.
	By this and by \eqref{eq:vol_normal_variation}, we obtain
	\begin{equation}\label{eq:volume_change_shrink_facet}
	\vol{k}\cube(\tilde{S})-\vol{k}\cube(S)=-\frac{2t}{|v|} \vol{k-1}F_v +  o(t).
	\end{equation}
Using identities \eqref{eq:volume_change_shrink_facet} and \eqref{eq:det_first_order_shrinking} in \eqref{eq:ns_cond_lem_shift_plane} , we get
\[
1 -  \frac{2t}{|v|} \frac{\vol{k-1} F_v}{\vol{k}\cube(S)}\leq
1 - d |v|^2 t + o(t).
\]
	
Since $\tilde{S} = S$ for $t=0$ and  the previous inequality holds for all $t \in (-\varepsilon, \varepsilon)$ for a sufficiently small $\varepsilon,$  the coefficients of $t$ in both sides of the previous inequality coincide. That is, 
	\begin{equation*}
		\frac{2}{|v|}\frac{\vol{k-1} F_v}{\vol{k}\cube(S)}=d|v|^2.
	\end{equation*}
This completes the proof.
\end{proof}

\begin{lemma}\label{lem:intersecting_in_barycenter}
Let $S$ be a local maximizer of \eqref{eq:main_problem_cube} and  $v\in S.$ Then the line $\span\{v\}$ intersects the hyperplane $H_v$ in the centroid of the facet $F_v.$
\end{lemma}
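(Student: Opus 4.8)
The plan is to perturb the opposite facets $F_v$ and $-F_v$ of $\cube(S)$ by a small rotation and to read off the first-order necessary condition supplied by Lemma~\ref{lem:n_s_cond_cube}, in the same spirit as the proof of Lemma~\ref{lem:necessary_condition_first_order} but using Transformation~\ref{trans:rotate_facet} in place of Transformation~\ref{trans:shift_facet}. By Lemma~\ref{lem:hyperplane_meets_facet}, $F_v$ is a genuine facet of $\cube(S)$, so in particular $v\neq 0$; write $c$ for the centroid of $F_v$ and $c_v=H_v\cap\span\{v\}=v/|v|^2$ for the point where the line $\span\{v\}$ meets $H_v$. Since both $c$ and $c_v$ lie in $H_v$, the difference $c-c_v$ is orthogonal to $v$. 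The goal is to prove $c=c_v$, and for this it suffices to show $\iprod{c-c_v}{u}=0$ for every unit vector $u$ orthogonal to $v$.

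Fix such a $u$, and let $\tilde S$ be the $n$-tuple obtained from $S$ by the $F_v$-substitution in the direction $tu$, $t\in\R$. By Corollary~\ref{cor:perturbations_explained}, for sufficiently small $|t|$ the tuple $\tilde S$ is a frame, $\cube(\tilde S)$ is combinatorially the same polytope as $\cube(S)$, and both sides of the inequality $\vol{k}\cube(\tilde S)/\vol{k}\cube(S)\le 1/\sqrt{\det A_{\tilde S}}$ from Lemma~\ref{lem:n_s_cond_cube} are smooth functions of $t$ near $0$ that agree (with common value $1$) at $t=0$; hence their derivatives at $t=0$ coincide. I would first evaluate the right-hand side: the substitution replaces each of the $d$ vectors $\pm v$ of $S$ corresponding to $F_v$ by $\pm(v+tu)$ with matching signs, so by Lemma~\ref{lem:der_det_identity} one has $\sqrt{\det A_{\tilde S}}=1+t\,d\iprod{u}{v}+o(t)=1+o(t)$, whence $1/\sqrt{\det A_{\tilde S}}=1+o(t)$.

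It remains to evaluate the left-hand side. The $F_v$-substitution in the direction $tu$ is precisely the situation of Transformation~\ref{trans:rotate_facet} applied at once to the facet $F_v$ (whose scaled outer normal $v$ becomes $v+tu$) and to the opposite facet $-F_v$ (whose scaled outer normal $-v$ becomes $-(v+tu)$, i.e.\ is perturbed in the direction $-u$). Formula \eqref{eq:operation_polytope_2_final_destination} applied to $F_v$ gives a volume increment $\frac{\vol{k-1}F_v}{|v|}\iprod{c-c_v}{u}\,t+o(t)$; applied to $-F_v$, whose centroid is $-c$ and whose line-intersection point is $-c_v$ by the central symmetry of $\cube(S)$, it gives the very same increment. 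Therefore
\[
\vol{k}\cube(\tilde S)-\vol{k}\cube(S)=\frac{2\vol{k-1}F_v}{|v|}\iprod{c-c_v}{u}\,t+o(t).
\]
Dividing by $\vol{k}\cube(S)$ and matching the coefficient of $t$ with the one on the right-hand side (which is $0$) forces $\iprod{c-c_v}{u}=0$, because $\vol{k-1}F_v>0$, $|v|>0$ and $\vol{k}\cube(S)>0$. Since $u$ was an arbitrary unit vector in $v^\perp$ and $c-c_v$ already lies in $v^\perp$, we conclude $c-c_v=0$, which is exactly the claim.

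The step I expect to need the most care is the bookkeeping for the opposite facet $-F_v$: one has to verify that the $F_v$-substitution indeed rotates the supporting half-space of $-F_v$, that the relevant angle, centroid, and normal-perturbation direction are the central reflections of those for $F_v$, and consequently that the two first-order volume contributions add rather than cancel. Once this centrally symmetric pair is treated correctly, the remainder is the routine argument of matching first-order Taylor coefficients in a two-sided inequality, exactly as in the proof of Lemma~\ref{lem:necessary_condition_first_order}.
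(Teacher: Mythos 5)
Your proof is correct and follows essentially the same route as the paper: an $F_v$-substitution in an orthogonal direction $tu$, Transformation~\ref{trans:rotate_facet} applied to the pair $\pm F_v$ to get the first-order volume increment $\frac{2\vol{k-1}F_v}{|v|}\iprod{c-c_v}{u}\,t+o(t)$, the vanishing of the linear term of $\sqrt{\det A_{\tilde S}}$ via Lemma~\ref{lem:der_det_identity}, and matching first-order coefficients in the two-sided inequality of Lemma~\ref{lem:n_s_cond_cube}. Your explicit bookkeeping for the opposite facet (checking that the two contributions add rather than cancel) is exactly the point the paper leaves implicit, and it is handled correctly.
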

\begin{proof}
Denote the centroid of $F_v$ by $c$ and  let $c_v = \span \{v\} \cap H_v.$ 
Fix a unit vector $u$ orthogonal to $v.$
	Denote by $\tilde{S}$ the $n$-tuple obtained from $S$ by $F_v$-substitution in the direction $tu$ with $t \in \R.$ 
Thus, we apply \Href{Transformation}{trans:rotate_facet}  to the facets $\pm F_v$ of $\cube(S).$    

 By \Href{Lemma}{lem:n_s_cond_cube}, we have 
\begin{equation}
\label{eq:ns_cond_lem_rotate_plane}
	\frac{\vol{k}\cube(\tilde{S})}{\vol{k} \cube(S)}\leq\frac{1}{\sqrt{\det A_{\tilde{S}}}}.
\end{equation}

By \Href{Lemma}{lem:der_det_identity} and  \Href{Corollary}{cor:perturbations_explained}, both sides of this inequality are smooth as functions of $t$ in a sufficiently small neighborhood of  $t_0 =0.$
	Consider the Taylor expansions of both sides of  inequality \eqref{eq:ns_cond_lem_shift_plane} as functions of $t$ about $t_0=0.$ 

By \eqref{eq:operation_polytope_2_final_destination}, we obtain
	\begin{equation*}
		\vol{k}\cube(\tilde{S})-\vol{k}\cube(S)= C \iprod{c - c_v}{u} t + o(t),
	\end{equation*}
	where $C = 2\vol{k-1}F_v/|v| > 0.$
	By \Href{Lemma}{lem:der_det_identity}, $\sqrt{\det A_{\tilde{S}}}=1+o(t).$
Therefore, inequality \eqref{eq:ns_cond_lem_rotate_plane} takes the following form
\[
1 + \frac{C}{\vol{k} \cube(S)}  \iprod{c - c_v}{u} t + o(t) \leq 1 + o(t).
\]
Since $\tilde{S} = S$ for $t=0$ and  the previous inequality
 holds for all $t \in (-\varepsilon, \varepsilon)$ for a sufficiently small $\varepsilon,$  the coefficients of $t$ in both sides of the previous inequality coincide. That is,  we conclude 
	\begin{equation*}
		\iprod{c - c_v}{u} = 0.
	\end{equation*}
Since $c, c_v \in H_v$ and the last identity holds for all unit vectors parallel to $H_v,$ it follows that  $c = c_v.$
The lemma is proven.    
\end{proof}

As a simple consequence of \Href{Lemma}{lem:intersecting_in_barycenter}, we obtain the following result for the planar case.
\begin{thm}\label{thm:maximizer_cyclic}
Let $S \in \Omega(n,2)$ be a local maximizer of \eqref{eq:main_problem_cube} for $k=2.$ Then, the polygon $\cube(S)$ is cyclic. That is, there is a circle that passes through all the vertices of $\cube(S)$.
\end{thm}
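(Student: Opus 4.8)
The plan is to exploit the centroid condition from \Href{Lemma}{lem:intersecting_in_barycenter} together with the shift condition from \Href{Lemma}{lem:necessary_condition_first_order} to pin down the distance from the origin to each edge line. In the plane, the polygon $\cube(S)$ is an intersection of slabs, so for each $v \in S$ the edge $F_v$ lies in the line $H_v$, at distance $1/|v|$ from the origin, and by \Href{Lemma}{lem:intersecting_in_barycenter} the foot of the perpendicular from the origin to $H_v$ is exactly the midpoint $c_v$ of the edge $F_v$. Thus the origin lies on the perpendicular bisector of every edge of $\cube(S)$.

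From this the claim is almost immediate: a point equidistant from the two endpoints of every edge — i.e.\ lying on the perpendicular bisector of each side — is equidistant from all vertices, hence is the center of a circle through all vertices. More carefully, let $p_0, p_1, \dots, p_{m-1}, p_m = p_0$ be the vertices of $\cube(S)$ in cyclic order, so that $[p_{j-1}, p_j]$ is an edge. Since the origin lies on the perpendicular bisector of $[p_{j-1}, p_j]$, we get $|p_{j-1}| = |p_j|$ for every $j$, and chaining these equalities gives $|p_0| = |p_1| = \dots = |p_{m-1}| =: \rho$. Then all vertices lie on the circle of radius $\rho$ centered at the origin, which is precisely the assertion that $\cube(S)$ is cyclic.

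One small technical point to address: I should note why for each edge $F_v$ of $\cube(S)$ there is indeed a vector $v \in S$ with $F_v$ (or $-F_v$) equal to that edge, so that \Href{Lemma}{lem:intersecting_in_barycenter} applies to every edge. This follows from identity \eqref{generated_section} — every facet of $\cube(S)$ is cut out by one of the supporting hyperplanes $H_{\pm v_i}$ — combined, if one wants the hyperplane to actually meet $\cube(S)$ in a genuine facet, with \Href{Lemma}{lem:hyperplane_meets_facet}, which guarantees $F_v$ is a facet for every $v \in S$. Also, since $\cube(S)$ is centrally symmetric, once the perpendicular bisector of each edge passes through the origin for the ``positive'' edges $F_v$, the same holds automatically for the opposite edges $-F_v$.

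I do not anticipate a genuine obstacle here: the entire content has been front-loaded into \Href{Lemma}{lem:intersecting_in_barycenter}, and the remaining argument is the elementary observation that ``equidistant from the endpoints of every edge'' propagates to ``equidistant from all vertices.'' The only thing requiring a modicum of care is making sure the cyclic chaining of the equalities $|p_{j-1}| = |p_j|$ closes up, which it does because the edges of a polygon form a single closed cycle; and confirming that $\cube(S)$ is a bounded polygon in the first place (it is, since $S$ is a frame, so $\span S = \R^2$ and $\cube(S)$ is a bounded intersection of slabs).
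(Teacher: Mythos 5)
Your proof is correct and follows essentially the same route as the paper: it invokes \Href{Lemma}{lem:intersecting_in_barycenter} to conclude that the foot of the perpendicular from the origin to each edge is that edge's midpoint, so each triangle formed by the origin and an edge is isosceles, and chaining the resulting equalities of vertex distances around the polygon yields cyclicity. The extra care you take (every edge arises from some $v\in S$, boundedness of $\cube(S)$, closing the cycle) is implicit in the paper's shorter argument.
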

\begin{proof}
	Denote the origin by $o$. Let $ab$ be an edge of $\cube(S)$ and $oh$ be the altitude of the triangle $abo$. By \Href{Lemma}{lem:intersecting_in_barycenter}, $h$ is the midpoint of $ab.$  Hence, the triangle $abo$ is isosceles and $ao = bo.$  It follows that $\cube(S)$ is cyclic.
\end{proof}

We are ready to give a proof of \Href{Theorem}{thm:cube_first_order_n_c}.

\begin{proof}[Proof of \Href{Theorem}{thm:cube_first_order_n_c}]
Recall that any identification of $H$ with $\R^k$ identifies the projections of the standard basis $\{v_1, \dots, v_n\}$ with a tight frame, denoted by $S,$ that is a local maximizer of \eqref{eq:main_problem_cube}.

Next, assertion~\ref{item:first_order_section} is trivial and holds for any section of the cube.
 Assertion~\ref{item:first_order_facet}  and  assertion~\ref{item:first_order_centroid}   are equivalent to \Href{Lemma}{lem:hyperplane_meets_facet} and \Href{Lemma}{lem:intersecting_in_barycenter}, respectively.

By  \Href{Lemma}{lem:hyperplane_meets_facet}, all vectors $v \in S$ such that $\span{v}$ intersects $F$  correspond to $F$ and have the same length that we denote by $|v|.$  Then by  \Href{Lemma}{lem:intersecting_in_barycenter}, the span of each of these vectors intersects $F$ in its centroid.
Since the length of the altitude of the pyramid $P_F$ is $1/|v|,$ we have
	$$\vol{k}P_F=\frac{1}{k}\frac{\vol{k-1}F}{|v|}.$$
Hence assertion~\ref{item:first_order_volumes} follows from  \Href{Lemma}{lem:necessary_condition_first_order}.
\end{proof}

\section{Proof of \texorpdfstring{\Href{Theorem}{thm:main_result}}{Theorem 1.3}}
We use the setting of tight frames developed in the previous sections to prove the theorem. More precisely, we use the obtained necessary conditions for a tight frame in $\R^2$ that maximizes~\eqref{eq:main_problem_cube} for $n>k=2$ to prove that the section of the cube generated by the tight frame is a rectangle of area $4C_{\cube}(n,2)=4\sqrt{\lceil n/2\rceil\lfloor n/2\rfloor}$ with the sides of lengths $2\sqrt{\lfloor n/2\rfloor}$ and $2\sqrt{\lceil n/2\rceil}$.

First, let us introduce the notation. 
Let $ S=\{v_1, \dots, v_n \}\in \Omega(n,2)$ be a global maximizer of~\eqref{eq:main_problem_cube} for $k = 2$ and $n>2.$ Clearly, $\cube(S)$ is a centrally symmetric polygon in $\R^2$.  The number of edges of $\cube(S)$ is denoted by  $2f.$ Clearly, $f  \leq n.$ By \Href{Theorem}{thm:maximizer_cyclic}, the polygon
$\cube(S)$ is cyclic; and we denote its circumradius by $R.$ Let $F_1, \dots F_{2f}$ be the edges of $\cube(S)$ enumerated in clockwise direction (that is, edges $F_i$ and $F_{i+f}$ are opposite to each other, $i \in [f]$). We reenumerate the vectors of $S$ in such a way that the vector $v_i$ 
corresponds to  the edge $F_i$ for every $i \in [f].$ The central angle subtended by the  edge $F_i$ is denoted by $2 \varphi_i,$ $i \in [f].$

Clearly, we have the following identities (see \Href{Figure}{fig:planar_section_identities})
\begin{equation}
 \label{eq:sum_phi}
 \varphi_1 + \dots + \varphi_f = \frac{\pi}{2},
 \end{equation}
 \begin{equation}
 \label{eq:R_of_phi}
	 R  \cos \varphi_i = \frac{1}{|v_i|} \quad  \forall i \in [f]  
 \end{equation} 
and
	\begin{equation}\label{eq:area_in_terms_of_angles}
		\area\cube(S)=R^2\sum_{i=1}^f\sin2\varphi_i.
	\end{equation}
Also, we note here that 
\begin{equation}\label{eq:cube_area_lower_bound}
\area \cube(S) \geq 4 C_\cube(n,2)= 4
\sqrt{\left\lceil\frac{n}{2}\right\rceil\left\lfloor\frac{n}{2}\right\rfloor}.
\end{equation}

\begin{figure}[!ht]
	\includegraphics[scale=1.2]{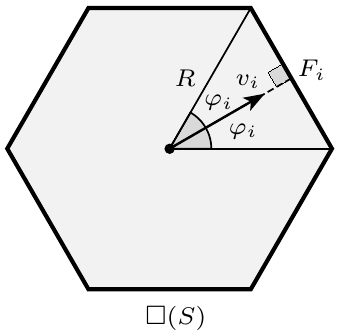}
	\caption{Notation for $\cube(S)$}\label{fig:planar_section_identities}
\end{figure}

There are several steps in the proof. 
We explain the main steps briefly.
In fact, we want to show that the number of edges of a local maximizer is $2f = 4.$ Using the discrete isoperimetric inequality (see below), we obtain an upper bound on $\area\cube(S)$ in terms of $f.$  This upper bound  yields the desired result for  $n \geq 8$ (the bound is less than conjectured volume $4 C_{\cube}(n,2)$ for $n \geq 8$). Finally, we deal with the lower-dimension cases using the necessary conditions obtained earlier. 

The discrete isoperimetric inequality for cyclic polygons says that among all cyclic $f$-gons with fixed circummradius there is a unique maximal area polygon -- the regular $f$-gon. We will use a slightly more general form.
Namely, fixing one or several central angles of a cyclic polygon, its area is maximized when all other central angles are equal. For example, this follows from concavity of the sine function on $[0, \pi].$ In our notation  fixing the central angle $\varphi_i$ and its vertically opposite, we have
\begin{equation}\label{eq:stronger_discrete_isoperimetric_inequality}
R^2 f \sin \frac{\pi}{f} \geq	R^2\left(\sin2\varphi_i + (f-1)\sin\frac{\pi-2\varphi_i}{f-1}\right)\geq 4 C_\cube(n,2).
\end{equation}

\subsection{Step 1.}
\begin{claim}\label{claim:f_2}
	The area of $\cube(S)$ such that $f=2$ is at most $4C_{\cube}(n,2)$. The bound is attained when $\cube(S)$ is a rectangle with the sides of lengths $2\sqrt{\lceil n/2\rceil}$ and $2\sqrt{\lfloor n/2\rfloor}$.
\end{claim}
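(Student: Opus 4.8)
The plan is to reduce the case $f=2$ to elementary geometry together with one integer optimization. When $f=2$ the polygon $\cube(S)$ has exactly four edges and is centrally symmetric about the origin; a quadrilateral whose diagonals bisect each other is a parallelogram, and here the two diagonals join antipodal vertices and hence cross at the origin, so $\cube(S)$ is a parallelogram. Invoking \Href{Theorem}{thm:maximizer_cyclic}, $\cube(S)$ is also cyclic, and a cyclic parallelogram is a rectangle. So the first step is just to record that $\cube(S)$ is a rectangle centered at $o$.

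Next I would do the bookkeeping of the frame vectors. The four edges split into two antipodal pairs, and by \Href{Lemma}{lem:hyperplane_meets_facet} every $v\in S$ has $F_v$ equal to one of these four edges, so each $v\in S$ corresponds to exactly one of the two pairs; let $d_1,d_2$ be the numbers of vectors in the two classes, so $d_1+d_2=n$ and $d_1,d_2\ge 1$. All vectors in one class coincide up to sign; write them as $\pm u_1$ and $\pm u_2$, where $u_i$ is the scaled outer normal of one edge of the $i$-th pair. Since adjacent edges of the rectangle are orthogonal, $u_1\perp u_2$, so the tight frame identity \eqref{John_cond_3}, which reads $d_1\, u_1\otimes u_1 + d_2\, u_2\otimes u_2 = I_2$, gives $d_i|u_i|^2=1$ upon pairing with $u_1$ and with $u_2$. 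As $\cube(S)=\{x:\lvert\iprod{x}{u_1}\rvert\le1\}\cap\{x:\lvert\iprod{x}{u_2}\rvert\le1\}$ is a rectangle whose sides are orthogonal to $u_1$ and $u_2$, its side lengths are $2/|u_1|=2\sqrt{d_1}$ and $2/|u_2|=2\sqrt{d_2}$, hence
\[
\area\cube(S)=4\sqrt{d_1 d_2}.
\]
(Equivalently, without using the rectangle shape: $\cube(S)$ is the preimage of $[-1,1]^2$ under $x\mapsto(\iprod{x}{u_1},\iprod{x}{u_2})$, whose determinant has absolute value $1/\sqrt{d_1 d_2}$ by the frame identity, giving the same area.)

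Finally I would close with the integer AM–GM step: among positive integers $d_1,d_2$ with $d_1+d_2=n$, the product $d_1 d_2$ is maximized exactly at $\{d_1,d_2\}=\{\lceil n/2\rceil,\lfloor n/2\rfloor\}$, with maximal value $\lceil n/2\rceil\lfloor n/2\rfloor$. Therefore $\area\cube(S)=4\sqrt{d_1 d_2}\le 4\sqrt{\lceil n/2\rceil\lfloor n/2\rfloor}=4C_{\cube}(n,2)$, and equality forces $\cube(S)$ to be the rectangle with sides $2\sqrt{\lceil n/2\rceil}$ and $2\sqrt{\lfloor n/2\rfloor}$ (which does occur, by \Href{Lemma}{lem:plane_extremizers_cube}). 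I do not expect a serious obstacle here: this is the base case of the case analysis in the proof of \Href{Theorem}{thm:main_result}, and the only points needing a little care are the implication ``centrally symmetric cyclic quadrilateral $\Rightarrow$ rectangle'' and the fact that no vector of $S$ is left unaccounted for among the two edge classes, which is precisely \Href{Lemma}{lem:hyperplane_meets_facet}. The real difficulty of the theorem lies in the later steps, where one must rule out $f\ge 3$.
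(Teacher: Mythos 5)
Your proof is correct, but it is considerably longer than the paper's, which is a two-line reduction: when $f=2$ the centrally symmetric quadrilateral $\cube(S)$ is a parallelogram, i.e.\ an affine square, and then the bound is immediate from the very \emph{definition} of $C_\cube(n,2)$ (the maximum area over affine-square sections) together with \Href{Lemma}{lem:plane_extremizers_cube}, which evaluates that constant and identifies the extremizers. What you have done, in effect, is unfold the $k=2$ case of the appendix proof of that lemma: the bookkeeping $d_1+d_2=n$, the orthogonality $u_1\perp u_2$, the identity $d_i|u_i|^2=1$, the area $4\sqrt{d_1d_2}$, and the integer AM--GM. That is all sound, and your parenthetical remark is the right one to emphasize: the orthogonality and the area formula follow from the tight-frame identity $d_1\,u_1\otimes u_1+d_2\,u_2\otimes u_2=I_2$ alone, so the detour through \Href{Theorem}{thm:maximizer_cyclic} (cyclic parallelogram $\Rightarrow$ rectangle) is dispensable; indeed $I_2-d_1\,u_1\otimes u_1$ must be rank one, which forces $u_2\parallel u_1^{\perp}$. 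One small point worth noting: neither the bound nor your computation actually needs $S$ to be a maximizer --- any tight frame with $f=2$ satisfies $\area\cube(S)=4\sqrt{d_1d_2}\le 4C_\cube(n,2)$ --- so the appeals to \Href{Lemma}{lem:hyperplane_meets_facet} and \Href{Theorem}{thm:maximizer_cyclic} import hypotheses that are available here but not required. The trade-off is that your version is self-contained at the cost of duplicating the lemma's proof, whereas the paper's version delegates everything to a result it has already stated and proved in the appendix.
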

\begin{proof}
	Since $f=2$, the polygon $\cube(S)$ is an affine square. Hence the claim is an immediate consequence of  \Href{Lemma}{lem:plane_extremizers_cube}.
\end{proof}
Thus, it suffices to prove that $f=2$ for any $n>2$.

\subsection{Step 2.}
\begin{claim}\label{lem:circumradius}
For any $n >2$ the following inequality holds
	\begin{equation}\label{eq:R_upper_bound}
		R^2\leq\frac{n+1}{2}\frac{1}{\cos^2\frac{\pi}{2f}}.
	\end{equation}
\end{claim}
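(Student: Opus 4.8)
The plan is to bound the circumradius $R$ by relating it to the lengths $|v_i|$ of the frame vectors, and then using the sharpened length estimate from Lemma~\ref{lem:squared_lengths_best_bounds}. First I would pick the edge $F_j$ with the largest central angle $\varphi_j$; by \eqref{eq:sum_phi} we have $\varphi_j \geq \frac{\pi}{2f}$, since the $f$ angles $\varphi_1,\dots,\varphi_f$ are positive and sum to $\pi/2$, so their maximum is at least their average. The cosine function is decreasing on $[0,\pi/2]$, hence $\cos\varphi_j \leq \cos\frac{\pi}{2f}$. Combined with \eqref{eq:R_of_phi}, which gives $R\cos\varphi_j = 1/|v_j|$, this yields
\[
R = \frac{1}{|v_j|\cos\varphi_j} \geq \frac{1}{|v_j|\cos\frac{\pi}{2f}}.
\]
That bound is in the wrong direction, so instead I would go the other way: for the index $i$ realizing the \emph{smallest} central angle, or more directly, I want an upper bound on $R$, so I should use a \emph{lower} bound on $|v_i|\cos\varphi_i$ for a well-chosen $i$ together with a \emph{lower} bound on $\cos\varphi_i$.

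The cleaner route: by \eqref{eq:R_of_phi}, $R^2 = \frac{1}{|v_i|^2\cos^2\varphi_i}$ for every $i\in[f]$. Choose $i$ to be the index minimizing $\varphi_i$; then $\varphi_i \leq \frac{\pi}{2f}$ (the minimum of $f$ positive numbers summing to $\pi/2$ is at most the average), so $\cos^2\varphi_i \geq \cos^2\frac{\pi}{2f}$. Simultaneously, by the rightmost inequality in \eqref{eq:vec_lengths_planar_case} of Lemma~\ref{lem:squared_lengths_best_bounds}, $|v_i|^2 \leq \frac{2}{n-1}$ — but again this bounds $|v_i|^2$ from above, making $1/|v_i|^2$ large, the wrong direction. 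So I actually need the \emph{leftmost} inequality $|v_i|^2 \geq \frac{2}{n+1}$, which forces $\frac{1}{|v_i|^2} \leq \frac{n+1}{2}$. Putting these together for this choice of $i$:
\[
R^2 = \frac{1}{|v_i|^2\cos^2\varphi_i} \leq \frac{n+1}{2}\cdot\frac{1}{\cos^2\frac{\pi}{2f}},
\]
which is exactly \eqref{eq:R_upper_bound}.

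The main obstacle — really the only subtlety — is making sure the two estimates point the same way on the \emph{same} index $i$: I need an index where $\varphi_i$ is small (so $\cos^2\varphi_i$ is large, helping the upper bound on $R^2$) and where $|v_i|^2$ is large (so $1/|v_i|^2$ is small). The angle bound $\varphi_i \le \pi/(2f)$ holds for the minimizing index automatically; the length bound $|v_i|^2 \ge 2/(n+1)$ holds for \emph{every} $v\in S$ by Lemma~\ref{lem:squared_lengths_best_bounds}, so there is no conflict — any index $i$ with $\varphi_i \le \pi/(2f)$ works, and such an index exists since $\varphi_1+\dots+\varphi_f=\pi/2$. Thus I would: (1) invoke \eqref{eq:sum_phi} to produce $i$ with $\varphi_i\le\pi/(2f)$; (2) apply monotonicity of $\cos^2$ on $[0,\pi/2]$; (3) apply the lower bound $|v_i|^2\ge 2/(n+1)$ from Lemma~\ref{lem:squared_lengths_best_bounds}; (4) substitute into the identity $R^2 = 1/(|v_i|^2\cos^2\varphi_i)$ from \eqref{eq:R_of_phi}. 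No computation beyond these substitutions is required.
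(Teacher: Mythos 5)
Your final argument is correct and is essentially identical to the paper's proof: both pick the index of the smallest central angle so that $\varphi_i\le\pi/(2f)$, apply $\cos^2\varphi_i\ge\cos^2\frac{\pi}{2f}$, and combine the leftmost inequality $|v_i|^2\ge\frac{2}{n+1}$ of Lemma~\ref{lem:squared_lengths_best_bounds} with the identity $R^2=1/(|v_i|^2\cos^2\varphi_i)$ from \eqref{eq:R_of_phi}. The initial detours (largest angle, rightmost length bound) are correctly discarded and do not affect the validity of the proof.
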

\begin{proof}
Let $\varphi_1$ be the smallest central angle. 
By identity~\eqref{eq:sum_phi},  we have $\cos\varphi_1 \geq \cos \frac{\pi}{2f}.$
Combining this with the leftmost inequality in \eqref{eq:vec_lengths_planar_case} and identity \eqref{eq:R_of_phi},
we obtain
\[
R^2 = \frac{1}{|v_1|^2\cos^2\varphi_1} \leq 
\frac{n+1}{2}\frac{1}{\cos^2\frac{\pi}{2f}}.
\]
\end{proof}
\begin{claim}\label{claim:f_n_bound}
For any $n >2$ the following inequality holds
\begin{equation}
\label{eq:f_n_inequality}
 f\tan\frac{\pi}{2f} \geq\frac{4}{n+1}\sqrt{\left\lceil\frac{n}{2}\right\rceil\left\lfloor\frac{n}{2}\right\rfloor}.
\end{equation}
\end{claim}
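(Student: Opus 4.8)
The plan is to obtain \eqref{eq:f_n_inequality} by chaining together \Href{Claim}{lem:circumradius}, the area lower bound \eqref{eq:cube_area_lower_bound}, and the discrete isoperimetric inequality for cyclic polygons, using a single double-angle identity to simplify the result. No genuinely new idea is required; the content is already in place.

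First I would record what the discrete isoperimetric inequality gives here. Since $\cube(S)$ is a centrally symmetric cyclic $2f$-gon with circumradius $R$ and central half-angles $\varphi_1,\dots,\varphi_f$ satisfying $\varphi_1+\dots+\varphi_f=\pi/2$ by \eqref{eq:sum_phi}, each $2\varphi_i$ lies in $(0,\pi)$, so concavity of $\sin$ on $[0,\pi]$ applied to \eqref{eq:area_in_terms_of_angles} yields
\[
\area\cube(S)=R^2\sum_{i=1}^f\sin 2\varphi_i\leq R^2 f\sin\frac{\pi}{f}.
\]
Combining this with the lower bound \eqref{eq:cube_area_lower_bound} gives $R^2 f\sin\frac{\pi}{f}\geq 4\sqrt{\lceil n/2\rceil\lfloor n/2\rfloor}$ (this is, in effect, the content of \eqref{eq:stronger_discrete_isoperimetric_inequality}, but stated in the form needed here).

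Next I would substitute the upper bound for $R^2$ from \Href{Claim}{lem:circumradius}, namely $R^2\leq\frac{n+1}{2}\cdot\frac{1}{\cos^2\frac{\pi}{2f}}$, on the left-hand side, obtaining
\[
\frac{n+1}{2}\cdot\frac{f\sin\frac{\pi}{f}}{\cos^2\frac{\pi}{2f}}\geq 4\sqrt{\left\lceil\frac{n}{2}\right\rceil\left\lfloor\frac{n}{2}\right\rfloor}.
\]
Then applying $\sin\frac{\pi}{f}=2\sin\frac{\pi}{2f}\cos\frac{\pi}{2f}$ collapses the left-hand side to $(n+1)\,f\tan\frac{\pi}{2f}$, and dividing through by $n+1>0$ gives exactly \eqref{eq:f_n_inequality}.

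The only thing to be careful about—the ``main obstacle,'' such as it is—is keeping the inequality directions consistent: the regular polygon \emph{maximizes} area, so the isoperimetric step produces an \emph{upper} bound on $\area\cube(S)$ in terms of $R$, which turns into a \emph{lower} bound on $R^2 f\sin\frac{\pi}{f}$ only after invoking \eqref{eq:cube_area_lower_bound}; and the upper bound for $R^2$ from \Href{Claim}{lem:circumradius} must be inserted on that same side. Once the double-angle identity is spotted, the claim is a one-line consequence.
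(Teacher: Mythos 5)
Your proposal is correct and follows exactly the paper's argument: combine the discrete isoperimetric bound $\area\cube(S)\leq R^2 f\sin\frac{\pi}{f}$ with the lower bound \eqref{eq:cube_area_lower_bound}, insert the circumradius estimate \eqref{eq:R_upper_bound}, and simplify via $\sin\frac{\pi}{f}=2\sin\frac{\pi}{2f}\cos\frac{\pi}{2f}$. No discrepancies to report.
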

\begin{proof}
	By the discrete isoperimetric inequality~\eqref{eq:stronger_discrete_isoperimetric_inequality},  we have
		$$R^2 f \sin \frac{\pi}{f} \geq 4C_{\cube}(n,2).$$
	Combining this with inequalities \eqref{eq:R_upper_bound} and \eqref{eq:cube_area_lower_bound}, we obtain
	\[
	 f \frac{\sin \frac{\pi}{f}}{\cos^2 \frac{\pi}{2f}} \geq \frac{2}{n+1} C_{\cube}(n,2) = \frac{8}{n+1}\sqrt{\left\lceil\frac{n}{2}\right\rceil\left\lfloor\frac{n}{2}\right\rfloor}.
	\]
The claim follows.
\end{proof}
\begin{claim}\label{claim:facet_number_estimation}
	The following bounds on $f$ hold:
	\begin{enumerate}
		\item $f=2$ if $n\geq 8$;
		\item $f\leq 3$ if $n=7$;
		\item $f\leq 4$  if $n=5$.
	\end{enumerate}
\end{claim}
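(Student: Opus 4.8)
The plan is to use the inequality $f\tan\frac{\pi}{2f}\geq\frac{4}{n+1}\sqrt{\lceil n/2\rceil\lfloor n/2\rfloor}$ from \Href{Claim}{claim:f_n_bound} together with the trivial bound $f\leq n$ and monotonicity considerations to pin down $f$ for each range of $n$. The key observation is that the left-hand side $g(f)=f\tan\frac{\pi}{2f}$ is a decreasing function of $f$ for $f\geq 1$ (since $\tan$ is convex on $[0,\pi/2)$, the quantity $f\tan\frac{\pi}{2f}$ decreases as $f$ grows, tending to $\pi/2$), whereas the right-hand side $h(n)=\frac{4}{n+1}\sqrt{\lceil n/2\rceil\lfloor n/2\rfloor}$ is close to $\frac{4}{n+1}\cdot\frac{n}{2}=\frac{2n}{n+1}$, which increases toward $2$ as $n\to\infty$. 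So for large $n$ the right-hand side forces $g(f)$ to be large, which forces $f$ to be small.

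First I would treat the main case $n\geq 8$. The goal is to show $f=2$ is the only possibility; since $f\geq 2$ always (a centrally symmetric polygon has at least $4$ edges), it suffices to rule out $f\geq 3$. Assuming $f\geq 3$, I would use $g(f)\leq g(3)=3\tan\frac{\pi}{6}=\sqrt{3}$, so \Href{Claim}{claim:f_n_bound} gives $\sqrt{3}\geq\frac{4}{n+1}\sqrt{\lceil n/2\rceil\lfloor n/2\rfloor}$. For $n$ even this reads $\sqrt{3}\geq\frac{4}{n+1}\cdot\frac{n}{2}=\frac{2n}{n+1}$, i.e. $\sqrt{3}(n+1)\geq 2n$, i.e. $n(\sqrt{3}-2)\geq-\sqrt{3}$, i.e. $n\leq\frac{\sqrt{3}}{2-\sqrt{3}}=\sqrt{3}(2+\sqrt{3})=2\sqrt 3+3\approx 6.46$; for $n$ odd, $\sqrt{\lceil n/2\rceil\lfloor n/2\rfloor}=\frac{\sqrt{n^2-1}}{2}$, giving an even smaller right-hand side and hence an even smaller bound on $n$. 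In either case $f\geq 3$ is impossible for $n\geq 7$ (the even/odd split must be checked, but the estimate comfortably excludes $n\geq 8$). Hence $f=2$ for $n\geq 8$.

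Next I would handle $n=7$ and $n=5$ by the same mechanism but tracking the constants more carefully. For $n=7$: if $f\geq 4$ then $g(f)\leq g(4)=4\tan\frac{\pi}{8}=4(\sqrt2-1)\approx 1.657$, while the right-hand side is $\frac{4}{8}\sqrt{4\cdot 3}=\frac{1}{2}\cdot 2\sqrt3=\sqrt3\approx 1.732$; since $1.657<1.732$ this contradicts \Href{Claim}{claim:f_n_bound}, so $f\leq 3$. For $n=5$: if $f\geq 5$ then $g(f)\leq g(5)=5\tan\frac{\pi}{10}\approx 1.625$, while the right-hand side is $\frac{4}{6}\sqrt{3\cdot 2}=\frac{2\sqrt6}{3}\approx 1.633$; since $1.625<1.633$ we get a contradiction, so $f\leq 4$. (The cases $n=3,4,6$ are not claimed here, presumably because $f\leq n$ already gives $f\in\{2,3\}$ or because they are dispatched separately downstream.) The main obstacle is purely numerical vigilance: the margins for $n=7$ and especially $n=5$ are razor-thin ($1.657$ vs $1.732$, and $1.625$ vs $1.633$), so one must be careful that the monotonicity of $g$ is applied correctly and that the comparison of $4(\sqrt2-1)$ with $\sqrt3$ and of $5\tan\frac{\pi}{10}$ with $\frac{2\sqrt6}{3}$ is rigorously justified rather than merely read off decimal approximations. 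I would verify $g$ is decreasing by differentiating $\log g(f)=\log f+\log\tan\frac{\pi}{2f}$ and checking the sign, or by using the standard inequality $\tan x>x$ to get $g(f)>\pi/2$ and the convexity of $\tan$ to get $g(f+1)<g(f)$.
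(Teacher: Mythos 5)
Your argument is correct and essentially identical to the paper's: both compare $g(f)=f\tan\frac{\pi}{2f}$ against $h(n)=\frac{4}{n+1}\sqrt{\lceil n/2\rceil\lfloor n/2\rfloor}$ using the monotonicity of $g$ and the critical comparisons $g(3)=h(7)=\sqrt3$, $g(4)=4(\sqrt2-1)<\sqrt3$, and $g(5)=5\tan\frac{\pi}{10}<\tfrac{2\sqrt6}{3}=h(5)$; the only (cosmetic) difference is that you solve the explicit even/odd formulas for $h(n)$ where the paper simply invokes that $h$ is increasing. One parenthetical misstatement: for odd $n$ the smaller right-hand side gives a \emph{weaker} constraint (admissible range $n\le 7$, with equality at $n=7$, so $f=3$ is \emph{not} excluded there), hence your remark that ``$f\ge 3$ is impossible for $n\ge 7$'' is false at $n=7$ --- but you never rely on it, you correctly handle $n=7$ via $g(4)<h(7)$, and all three stated bounds are rigorously established.
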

\begin{proof}
	We consider the functions in the left- and right-hand sides of~\eqref{eq:f_n_inequality} as functions of $f$ and $n$ respectively. Set $g(f)=f\tan\frac{\pi}{2f}$ and $h(n)=\frac{4}{n+1}\sqrt{\lfloor n/2\rfloor\lceil n/2\rceil}.$ 
Thus, inequality \eqref{eq:f_n_inequality} takes the form $g(f)\geq h(n).$	
	By routine analysis, we have that 
	$g$ is strictly decreasing  and $h$ is increasing on $\{n \in \N: n \geq 2\}.$
The first two assertions of the claim follows from this and the identity $g(3)=h(7).$
Inequality $f\leq 4$ for $n=5$ follows from the direct computations of $g(5), g(4)$  and $h(5)$ (see \Href{Figure}{fig:tables}).
\begin{figure}[!ht]
	\begin{subfigure}[b]{0.3\textwidth}
	  \centering
		\begin{tabular}{c||c|c|c|c}
		$f$ & $2$ & $3$ & $4$ & $5$\\
		\hline
		$g(f)$ & $2$ & $\sqrt{3}$ & $4\left(\sqrt{2}-1\right)$ & $\sqrt{5\left(5-2\sqrt{5}\right)}$\\
	  \end{tabular}
	\end{subfigure}
	\hspace{8em}
	\begin{subfigure}[b]{0.3\textwidth}
	  \centering
	  \begin{tabular}{c||c|c|c}
		$n$    & $5$  & $6$    & $7$ \\
		\hline
		$h(n)$ & $2\sqrt{6}/3$ & $12 / 7$ & $\sqrt{3}$ \\
	  \end{tabular}
	\end{subfigure}
	\caption{Some values of $g$ and $h$}
	\label{fig:tables}
  \end{figure}
\end{proof}

\Href{Theorem}{thm:main_result} is proven for $n\geq 8$. We proceed with the lower-dimensional cases.

\subsection{Step 3.}
\begin{claim}
For $n=7,$ we have that  $f =2.$
\end{claim}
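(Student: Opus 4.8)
The plan is to rule out the case $f=3$; by \Href{Claim}{claim:facet_number_estimation} this and $f=2$ are the only possibilities when $n=7$, and \Href{Claim}{claim:f_2} then finishes the job. The key observation is that for $n=7$ the value $f=3$ sits exactly on the boundary of the estimate $g(f)\ge h(n)$ of \Href{Claim}{claim:facet_number_estimation} (indeed $g(3)=h(7)=\sqrt3$), which is precisely why that claim could only give $f\le 3$ here. Consequently, if $f=3$ then \emph{every} inequality feeding into the chain of estimates established so far must hold with equality, and I would exploit this rigidity to pin down $\cube(S)$ completely and then contradict the fact that $S$ consists of seven vectors.

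Concretely, I would first combine the circumradius bound \eqref{eq:R_upper_bound}, which for $n=7$, $f=3$ reads $R^2\le \tfrac{16}{3}$, with the area identity \eqref{eq:area_in_terms_of_angles}, the constraint \eqref{eq:sum_phi}, and the concavity of $\sin$ on $[0,\pi]$, to obtain
\[
\area\cube(S)=R^2\sum_{i=1}^{3}\sin 2\varphi_i\le \frac{16}{3}\cdot 3\sin\frac{\pi}{3}=8\sqrt3 .
\]
On the other hand, \eqref{eq:cube_area_lower_bound} gives $\area\cube(S)\ge 4\sqrt{\lceil 7/2\rceil\lfloor 7/2\rfloor}=8\sqrt3$, so all of the displayed inequalities must be equalities: $R^2=\tfrac{16}{3}$, and, by strict concavity of the sine, $2\varphi_1=2\varphi_2=2\varphi_3=\tfrac{\pi}{3}$. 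In other words, $\cube(S)$ is a regular hexagon inscribed in a circle of radius $R=4/\sqrt3$.

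Finally I would read off the contradiction. From \eqref{eq:R_of_phi}, the equalities $R^2=\tfrac{16}{3}$ and $\varphi_i=\tfrac{\pi}{6}$ force $|v_i|^2=(R^2\cos^2\varphi_i)^{-1}=\tfrac14$ for $i\in\{1,2,3\}$; and since every vector of $S$ corresponds, up to a sign, to one of the edges $F_1,F_2,F_3$ (by \Href{Lemma}{lem:hyperplane_meets_facet}), each of the seven vectors of $S$ equals $\pm v_i$ for some $i$ and so has squared length $\tfrac14$. But $S$ is a tight frame, hence $\sum_{v\in S}|v|^2=k=2$, whereas the previous sentence forces this sum to equal $7\cdot\tfrac14=\tfrac74\ne 2$. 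This contradiction shows $f=2$. I expect the only nonroutine point to be recognizing that $f=3$ is exactly the equality regime for $n=7$ — everything afterwards is rigidity bookkeeping plus the arithmetic obstruction $3\nmid 7$, with only elementary computations.
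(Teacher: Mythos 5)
Your argument is correct and follows essentially the same route as the paper: both exploit that $g(3)=h(7)$ forces every inequality in the chain to be an equality, so that for $n=7$, $f=3$ the polygon $\cube(S)$ would have to be a regular hexagon with all $\varphi_i=\pi/6$. The only (equally valid) difference is where the contradiction is read off --- you pin down $R^2=16/3$ from the tight circumradius bound and contradict the trace identity $\sum_{v\in S}|v|^2=2$ via $7\cdot\tfrac{1}{4}\neq 2$, whereas the paper pins down $|v|^2=2/7$ from the trace identity and contradicts the area lower bound via $7\sqrt{3}<8\sqrt{3}$.
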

\begin{proof}
We showed that $f \leq 3$ for $n=7.$ 
Assume that $f=3.$  We see that inequality~\eqref{eq:f_n_inequality} with such values turns into an identity. It follows that $\varphi_1=\varphi_2=\varphi_3=\pi/6$ and $\cube(S)$ is a regular hexagon. Hence the vectors of $S$ are of the same length. Since $\sum\limits_{v \in S} |v|^2 = \operatorname{tr} I_2 =2,$ we conclude that $|v|^2 = 2/7$ and  $R^2 = \frac{1}{|v^2| \cos^2 \pi/6} = 14/3.$ However, the volume of such a hexagon is strictly less than
$4 C_\cube (7,2).$ We conclude that $f=2$ for $n=7.$
\end{proof}
\begin{claim} \label{cl:execptions_cube_planar}
For $n \in \{3,4,6\},$ we have that  $f=2.$
\end{claim}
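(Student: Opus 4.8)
The plan is to treat the remaining cases $n\in\{3,4,6\}$ by the same strategy already used for $n=7$: rule out every value of $f$ other than $f=2$ by deriving a contradiction with the lower bound \eqref{eq:cube_area_lower_bound}. First I would record what \Href{Claim}{claim:facet_number_estimation} and the monotonicity of $g$ and $h$ leave open. For $n=3$ we have $g(3)=\sqrt3=h(7)>h(3)=2\sqrt2/\,\text{(something)}$, so in principle larger $f$ could occur; the right move is to observe that since $\cube(S)$ is a centrally symmetric polygon inscribed in a circle and $S$ has only $n$ vectors, one always has $f\le n$, and moreover distinct edges force distinct vectors, so $f\le n$. Thus for $n=3$ we only need to exclude $f=3$, for $n=4$ exclude $f=3,4$, and for $n=6$ exclude $f=3,4,5,6$. (One should double-check whether $f$ can equal $n$ when some $v_i=0$; but \Href{Theorem}{thm:cube_first_order_n_c}\ref{item:first_order_facet} guarantees every $v_i\neq0$ and corresponds to a facet, so each of the $n$ vectors genuinely contributes, and $f\le n$ with equality only if all $v_i$ are pairwise non-parallel.)

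Next, for each surviving pair $(n,f)$ I would push the argument that worked for $n=7$: equality in the chain \eqref{eq:stronger_discrete_isoperimetric_inequality} together with the lower bound \eqref{eq:cube_area_lower_bound} is only barely possible, so either it is impossible outright, or it forces $\cube(S)$ to be the regular $2f$-gon inscribed in a circle of a forced radius. Concretely, combine $R^2f\sin(\pi/f)\ge 4C_\cube(n,2)$ from the isoperimetric inequality with the upper bound $R^2\le\frac{n+1}{2}\cos^{-2}\frac{\pi}{2f}$ from \Href{Claim}{lem:circumradius}; if this is already strictly violated we are done immediately. In the borderline cases where it is not, equality must hold everywhere, which forces all $\varphi_i$ equal (regular polygon) and $|v_1|^2=\frac{2}{n+1}$; then use $\sum_{v\in S}|v|^2=\operatorname{tr}I_2=2$ together with the fact that in a regular $2f$-gon all frame vectors have equal length to get $|v|^2=2/n$ and hence a forced $R$, and finally check by direct computation that the area $R^2\sum\sin2\varphi_i=2fR^2\sin(\pi/f)\cos(\pi/f)$ of this particular hexagon/octagon/etc.\ is strictly below $4C_\cube(n,2)$. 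The three contradictions (one per value of $n$) finish the proof, leaving $f=2$, whence \Href{Claim}{claim:f_2} gives the theorem.

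I expect the main obstacle to be the case $n=3$ (and to a lesser extent $n=6$), because there the ambient Ball-type bounds are loosest and the numbers $C_\cube(3,2)=\sqrt2$, $C_\cube(6,2)=3$ are small, so the crude inequality $g(f)\ge h(n)$ does not by itself kill $f=3$. For $n=3$ one must actually exploit more structure: with $f=3$ the polygon is a centrally symmetric cyclic hexagon whose three scaled normals $v_1,v_2,v_3$ form a tight frame with $\sum|v_i|^2=2$, and the centroid condition from \Href{Lemma}{lem:intersecting_in_barycenter} together with $R|v_i|\cos\varphi_i=1$ pins down the $\varphi_i$; I would either run the first-order necessary condition \eqref{eq:necessary_condition_first_order} for each vector to get a system forcing $\varphi_1=\varphi_2=\varphi_3=\pi/6$, or argue directly that any cyclic centrally symmetric hexagon generated by a tight frame of three vectors has area at most $3\sqrt3/ (\text{normalization})<4\sqrt2=4C_\cube(3,2)$. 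Once the hexagon is forced to be regular the computation is the same one-line check as in the $n=7$ argument. The analogous but longer bookkeeping for $f\in\{3,4,5,6\}$ when $n=6$ is routine given the monotonicity of $g$, since $g(4)=4(\sqrt2-1)\approx1.657<h(6)=12/7\approx1.714$ already excludes $f\ge4$, leaving only the single regular-hexagon check for $f=3$.
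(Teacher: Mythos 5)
The paper does not prove this claim with the isoperimetric machinery at all: it simply cites known results --- for $n=3$ the bound $4\sqrt2$ is a classical exercise (the reference \cite{zong}), and for $n\in\{4,6\}$ the claim follows from the equality-case analysis of Ball's inequality in \cite{ellips}, since $2\mid n$ makes the leftmost bound in \eqref{eq:cube_vol_upper_bound} tight exactly on affine squares. Your attempt to instead reconstruct a direct proof (which the authors explicitly say exists but is ``technical'' and omit) has a genuine gap at its core. The pivotal step in your plan is: ``in the borderline cases \ldots equality must hold everywhere, which forces all $\varphi_i$ equal.'' That reasoning is only valid when $g(f)=h(n)$ exactly, as happens for $(n,f)=(7,3)$. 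For every case you actually need here the inequality has slack: $g(3)=\sqrt3>h(3)=\sqrt2$, $g(3),g(4)>h(4)=8/5$, and $g(3)=\sqrt3>h(6)=12/7$. With slack, nothing forces the polygon to be regular, so your ``one-line check'' that the regular $2f$-gon has area below $4C_\cube(n,2)$ excludes only the regular configuration and leaves all non-regular cyclic $2f$-gons untouched. (Your exclusion of $f\ge4$ for $n=6$ via $g(4)<h(6)$ is correct, but the analogous computation fails for $n=4$, where $g(4)=4(\sqrt2-1)>8/5=h(4)$, so $f=4$ is not killed either --- contrary to what your enumeration implicitly assumes.)

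Concretely, to close the remaining cases $(n,f)\in\{(3,3),(4,3),(4,4),(6,3)\}$ you would need the finer arguments of Step 4 (bounds on the individual $\varphi_i$ from \Href{Lemma}{lem:squared_lengths_best_bounds}, plus the ratio condition $d_i/d_j=q(\varphi_i)/q(\varphi_j)$ from \Href{Lemma}{lem:necessary_condition_first_order}), and even these do not obviously suffice: for $n=6$, $f=3$ one can have $d_1=d_2=d_3=2$, and for $n=3$ or the octagon case $n=4$, $f=4$ all $d_i=1$, so the ``some $d_i=2$, some $d_j=1$'' contradiction used for $n=5$ is unavailable, and $q(\varphi)=\cos^2\varphi\sin2\varphi$ is not injective, so equal values of $q$ do not force equal angles. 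Your fallback for $n=3$ is only a sketch with a literal placeholder. As written, the proposal establishes the claim for none of $n\in\{3,4,6\}$; the honest fixes are either to carry out the substantially longer angle analysis or to do what the paper does and invoke the known results.
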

\begin{proof}
For $n=3,$ the statement is a simple exercise (see \cite{zong}). \Href{Conjecture}{conj:cube_conjecture} was confirmed in \cite{ellips}
for any $n > k \geq 1$ such that $k|n,$ in particular, for $k=2$ and 
$n \in \{4,6\}.$
\end{proof}
\begin{remark} The inequality on the area for $n \in \{4,6\}$ and $k = 2$ is a special case of the leftmost inequality in \eqref{eq:cube_vol_upper_bound} originally proved by K. Ball \cite{ballvolumes}. In \cite{ellips}, the equality cases in this Ball's inequality are described.
\end{remark}

\subsection{Step 4.} 
\begin{claim}\label{claim:n_5_upper_bound_phi}
Let  $n=5$ and either $f=3$ or $f=4.$ 
Then $\varphi_i \leq \pi/4$ for every $i\in[f].$
\end{claim}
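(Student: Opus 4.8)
The plan is to argue by contradiction: suppose some central angle exceeds $\pi/4$, say $\varphi_j > \pi/4$ for some $j \in [f]$, with $n = 5$ and $f \in \{3,4\}$. Since $\varphi_1 + \dots + \varphi_f = \pi/2$ by \eqref{eq:sum_phi}, at most one angle can exceed $\pi/4$, and the remaining $f-1$ angles sum to something less than $\pi/4$. The strategy is to combine the upper bound on the circumradius from \Href{Claim}{lem:circumradius} (inequality \eqref{eq:R_upper_bound}) with the lower bound $\area \cube(S) \ge 4 C_\cube(5,2) = 4\sqrt{6}$ from \eqref{eq:cube_area_lower_bound}, but now feed in the \emph{stronger} form of the discrete isoperimetric inequality \eqref{eq:stronger_discrete_isoperimetric_inequality} with the large angle $\varphi_j$ held fixed: this gives
\[
R^2\left(\sin 2\varphi_j + (f-1)\sin\frac{\pi - 2\varphi_j}{f-1}\right) \geq 4\sqrt{6}.
\]
Using $R^2 \le \frac{n+1}{2}\,\frac{1}{\cos^2(\pi/(2f))} = \frac{3}{\cos^2(\pi/(2f))}$ (this is the slightly delicate point — see below), we would get an inequality purely in $\varphi_j$ and $f$, namely
\[
\frac{3}{\cos^2\frac{\pi}{2f}}\left(\sin 2\varphi_j + (f-1)\sin\frac{\pi - 2\varphi_j}{f-1}\right) \geq 4\sqrt{6},
\]
and I would show the left-hand side is strictly smaller than $4\sqrt{6}$ whenever $\varphi_j \ge \pi/4$, for both $f=3$ and $f=4$, giving the contradiction.

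First I would handle the bookkeeping for the circumradius bound: \Href{Claim}{lem:circumradius} was proved by taking $\varphi_1$ to be the \emph{smallest} angle and bounding $\cos\varphi_1 \ge \cos\frac{\pi}{2f}$, which uses only $\varphi_1 \le \frac{\pi}{2f}$ (forced by \eqref{eq:sum_phi}) together with \eqref{eq:vec_lengths_planar_case} and \eqref{eq:R_of_phi}. That argument is valid regardless of whether some other angle is large, so the bound $R^2 \le \frac{3}{\cos^2(\pi/(2f))}$ is available to us here without modification. For $f=3$ this reads $R^2 \le 3/\cos^2(\pi/6) = 4$, and for $f=4$, $R^2 \le 3/\cos^2(\pi/8)$.

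Next I would carry out the single-variable estimate. Define $\Phi_f(\varphi) = \sin 2\varphi + (f-1)\sin\frac{\pi-2\varphi}{f-1}$ on $[\pi/4, \pi/2]$. Since $\varphi \mapsto \sin 2\varphi$ is decreasing on $[\pi/4,\pi/2]$ and $\varphi \mapsto (f-1)\sin\frac{\pi-2\varphi}{f-1}$ is also decreasing there (its argument $\frac{\pi-2\varphi}{f-1}$ ranges in $[0,\frac{\pi}{2(f-1)}] \subset [0,\pi/2]$ where sine is increasing, and the argument decreases in $\varphi$), the function $\Phi_f$ is decreasing on $[\pi/4,\pi/2]$, so its maximum over $\varphi_j \ge \pi/4$ is $\Phi_f(\pi/4) = \sin\frac{\pi}{2} + (f-1)\sin\frac{\pi}{2(f-1)} = 1 + (f-1)\sin\frac{\pi}{2(f-1)}$. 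It then remains to check the two numerical inequalities
\[
\frac{3}{\cos^2\frac{\pi}{2f}}\left(1 + (f-1)\sin\frac{\pi}{2(f-1)}\right) < 4\sqrt{6} \qquad (f = 3, 4),
\]
which are finite computations: for $f=3$ the left side is $4\bigl(1 + 2\sin\frac{\pi}{4}\bigr) = 4(1+\sqrt2) \approx 9.66 < 4\sqrt6 \approx 9.80$, and for $f=4$, $\frac{3}{\cos^2(\pi/8)}\bigl(1 + 3\sin\frac{\pi}{6}\bigr) = \frac{3}{\cos^2(\pi/8)}\cdot\frac{5}{2} \approx 8.79 < 9.80$. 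Each gives the contradiction with \eqref{eq:stronger_discrete_isoperimetric_inequality} combined with \eqref{eq:cube_area_lower_bound}.

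The main obstacle is nothing conceptual — it is making sure the chain of inequalities is applied in a legitimate order, in particular that the stronger discrete isoperimetric inequality \eqref{eq:stronger_discrete_isoperimetric_inequality} is indeed applicable with the \emph{large} angle fixed rather than the small one (it is: \eqref{eq:stronger_discrete_isoperimetric_inequality} holds for \emph{any} chosen fixed angle, by concavity of sine), and that the numerical margins for $f=3$ and $f=4$ are genuinely positive. The $f=3$, $n=5$ case has the tightest margin ($9.66$ versus $9.80$), so some care with the estimates — e.g. not over-relaxing $\cos\varphi_1$ — is warranted, but the inequality does hold with room to spare.
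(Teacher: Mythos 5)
Your proof is correct, but it takes a genuinely different route from the paper's. The paper argues entirely with angle and length ratios: assuming some $\varphi_i>\pi/4$, it combines \eqref{eq:R_of_phi} with the two-sided length bound of \Href{Lemma}{lem:squared_lengths_best_bounds} to get $\cos\varphi_j\le\sqrt{3/2}\,\cos\varphi_i<\sqrt{3}/2$, hence $\varphi_j>\pi/6$ for \emph{every} $j$, which contradicts \eqref{eq:sum_phi} since $\pi/4+(f-1)\pi/6>\pi/2$ for $f\ge 3$. That is a three-line argument with no area estimates and no numerics. You instead run the area machinery --- the circumradius bound \eqref{eq:R_upper_bound}, the lower bound \eqref{eq:cube_area_lower_bound}, and the one-angle-fixed isoperimetric inequality \eqref{eq:stronger_discrete_isoperimetric_inequality} --- which is exactly the method the paper saves for the companion lower bound $\varphi_i\ge\pi/10$ in \Href{Claim}{claim:n_5_lower_bound_phi}. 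Your steps all check out: \eqref{eq:stronger_discrete_isoperimetric_inequality} is indeed valid with any chosen angle held fixed (Jensen applied to the remaining $f-1$ angles), the bound $R^2\le 3/\cos^2(\pi/(2f))$ from \Href{Claim}{lem:circumradius} is independent of which angle is large, your monotonicity claim for $\Phi_f$ on $[\pi/4,\pi/2]$ follows from $\Phi_f'(\varphi)=2\cos 2\varphi-2\cos\frac{\pi-2\varphi}{f-1}\le 0$ there, and the two numerical margins are genuine ($4(1+\sqrt{2})<4\sqrt{6}$ reduces to $8<9$, and $30-15\sqrt{2}<4\sqrt{6}$ has ample room). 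The trade-off: your version is uniform with the lower-bound claim and self-contained in the isoperimetric framework, but it is longer, needs the $f=3$ case's thin margin, and uses the full strength of \eqref{eq:cube_area_lower_bound}; the paper's ratio argument is shorter, exact, and works for all $f\ge 3$ at once.
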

\begin{proof}
Assume that there is $i \in [f]$ such that $\varphi_i>\pi/4.$  Thus, $\cos\varphi_i<1/\sqrt2.$ Using identity~\eqref{eq:R_of_phi} for $i$ and for any $j\in[f]$, we have  that
	$$\frac{\cos\varphi_j}{\cos\varphi_i}=\frac{|v_i|}{|v_j|}\leq\sqrt{\frac{n+1}{n-1}}=\sqrt{\frac{3}{2}},$$
where the inequality follows from \Href{Lemma}{lem:squared_lengths_best_bounds}.
Hence
	$\cos\varphi_j\leq\sqrt{3/2}\cos\varphi_i<\sqrt{3}/2$
and, therefore, $\varphi_j>\pi/6.$
This contradicts identity \eqref{eq:sum_phi}:
	$$\frac{\pi}{2}=\varphi_1+\dots+\varphi_f>\frac{\pi}{4}+(f-1)\frac{\pi}{6}>\frac{\pi}{2}.$$
Thus, $\varphi_i\leq\pi/4$ for every $i\in[f].$
\end{proof}
\begin{claim}\label{claim:n_5_lower_bound_phi}
Let  $n=5$ and either $f=3$ or $f=4.$ 
Then $\varphi_i \geq \pi/10$ for every $i\in[f].$
\end{claim}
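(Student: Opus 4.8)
The plan is to mirror the structure of \Href{Claim}{claim:n_5_upper_bound_phi}, but now argue from below. We already know, for $n=5$ and $f \in \{3,4\}$, that $\varphi_i \le \pi/4$ for every $i$ (\Href{Claim}{claim:n_5_upper_bound_phi}), so by \eqref{eq:R_of_phi} all the $|v_i|$ are controlled in both directions via \Href{Lemma}{lem:squared_lengths_best_bounds} with $n=5$, namely $1/3 \le |v_i|^2 \le 1/2$. The first step is to suppose, for contradiction, that some $\varphi_{i_0} < \pi/10$. The idea is that a very small central angle forces the corresponding $|v_{i_0}|$ to be close to $R^{-1}$ (since $\cos\varphi_{i_0}$ is close to $1$), hence relatively \emph{large}; by the ratio bound $\cos\varphi_j/\cos\varphi_{i_0} = |v_{i_0}|/|v_j| \le \sqrt{(n+1)/(n-1)} = \sqrt{3/2}$ from \Href{Lemma}{lem:squared_lengths_best_bounds}, this in turn forces every $\cos\varphi_j$ to be not much smaller than $\cos\varphi_{i_0} \approx 1$, i.e. every $\varphi_j$ must itself be small. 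Summing over $j \in [f]$ and using \eqref{eq:sum_phi} should then overshoot or undershoot $\pi/2$, giving the contradiction.

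Concretely, I would run the following computation. From $\varphi_{i_0} < \pi/10$ we get $\cos\varphi_{i_0} > \cos(\pi/10)$, so for every $j$,
\[
\cos\varphi_j \ge \frac{\cos\varphi_{i_0}}{\sqrt{3/2}} > \frac{\cos(\pi/10)}{\sqrt{3/2}}.
\]
One checks numerically that $\cos(\pi/10)/\sqrt{3/2} = \cos(\pi/10)\sqrt{2/3} \approx 0.951 \cdot 0.8165 \approx 0.7767 > \cos(\pi/2 - \text{something})$; more usefully, this lower bound exceeds $\cos(2\pi/5 \cdot \tfrac{1}{f-\text{const}})$ — the precise threshold has to be chosen so that each $\varphi_j$ is forced below a value whose $f$-fold (or $(f-1)$-fold, isolating $\varphi_{i_0}$) sum is strictly less than $\pi/2$, contradicting \eqref{eq:sum_phi}. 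Since $f \le 4$, the worst case is $f = 4$: we would need $\varphi_{i_0} + 3\max_j \varphi_j < \pi/2$ to fail, i.e. to derive $\varphi_j < $ roughly $\pi/6 - \varepsilon$ for all $j$, hence $\varphi_1 + \dots + \varphi_4 < \pi/10 + 3(\pi/6) \cdots$ — wait, that goes the wrong way, so in fact the contradiction must be extracted by the reverse inequality: small $\varphi_{i_0}$ forces all $\varphi_j$ small, hence the sum is \emph{too small}. For $f = 3$: if every $\varphi_j < \pi/6$ then $\sum < \pi/2$, contradiction; and $\cos\varphi_j > 0.7767 > \cos(\pi/6) \approx 0.866$ is false, so one needs a slightly sharper numerical threshold or to also invoke the upper bound $\varphi_j \le \pi/4$ to pin things down. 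The cleanest route is: for $f=3$, $\varphi_{i_0}<\pi/10$ gives two remaining angles summing to more than $2\pi/5$, so the \emph{largest} remaining angle $\varphi_{\max} \ge \pi/5 > \pi/4$ once the other is bounded — and $\varphi_{\max} > \pi/4$ contradicts \Href{Claim}{claim:n_5_upper_bound_phi}. For $f=4$, $\varphi_{i_0}<\pi/10$ leaves three angles summing to more than $2\pi/5$, so again one of them is $> 2\pi/15$, and chasing the ratio bound from \emph{that} large angle back to $\varphi_{i_0}$ via $\cos\varphi_{i_0}/\cos\varphi_{\max} \le \sqrt{3/2}$ forces $\cos\varphi_{i_0} \le \sqrt{3/2}\cos\varphi_{\max}$, bounding $\varphi_{i_0}$ \emph{away} from $0$ — the desired contradiction.

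The main obstacle I expect is the bookkeeping in the $f = 4$ case: with four angles one has more freedom, and a single application of the ratio bound from one extremal angle may not be tight enough, so I anticipate needing to combine (i) the lower bound $\cos\varphi_j \ge \cos\varphi_{i_0}/\sqrt{3/2}$ for \emph{all} $j$, (ii) the upper bound $\varphi_j \le \pi/4$ from \Href{Claim}{claim:n_5_upper_bound_phi}, and (iii) the constraint \eqref{eq:sum_phi}, and then verify one clean numerical inequality such as $\cos(\pi/10)\cdot\sqrt{2/3} > \cos(\theta)$ where $\theta$ is chosen so that $\pi/10 + 3\theta < \pi/2$ fails — i.e. $\theta < 2\pi/15$ but the forced bound on $\varphi_j$ is weaker than $2\pi/15$. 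If that direct estimate is too loose, the fallback is to isolate $\varphi_{i_0}$ and apply the \emph{stronger} discrete isoperimetric inequality \eqref{eq:stronger_discrete_isoperimetric_inequality} with $\varphi_{i_0}$ fixed: a tiny $\varphi_{i_0}$ makes the right-hand side $R^2(\sin 2\varphi_{i_0} + (f-1)\sin\frac{\pi-2\varphi_{i_0}}{f-1})$ strictly smaller than the regular-$f$-gon value, while $R^2$ is pinned between $2(f)/(n-1)$-type bounds, contradicting \eqref{eq:cube_area_lower_bound}. In the write-up I would present whichever of these two arguments yields the cleanest single inequality, most likely the angle-chasing one with one explicit numerical check.
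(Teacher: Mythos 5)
Your primary route---angle-chasing with the ratio bound $\cos\varphi_j/\cos\varphi_{i_0}=|v_{i_0}|/|v_j|\le\sqrt{3/2}$---does not close, and the specific contradiction you claim is false. For $f=3$ you write that the two remaining angles, summing to more than $2\pi/5$, force the largest of them to be $\ge\pi/5>\pi/4$; but $\pi/5<\pi/4$, so there is no conflict with \Href{Claim}{claim:n_5_upper_bound_phi}. Nor does the cosine-ratio bound rescue this: if $\varphi_{i_0}<\pi/10$ then $\cos\varphi_{i_0}>\cos(\pi/10)\approx 0.951$, while the largest remaining angle need only satisfy $\varphi_{\max}\ge\pi/5$, so $\cos\varphi_{\max}$ can be as large as $\cos(\pi/5)\approx 0.809$, giving a ratio $\approx 1.18<\sqrt{3/2}\approx 1.22$. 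The configuration $\varphi\approx(17^\circ,36.5^\circ,36.5^\circ)$ satisfies \eqref{eq:sum_phi}, the bound $\varphi_j\le\pi/4$, and every pairwise ratio constraint, so no combination of (i), (ii), (iii) in your list can rule it out. For $f=4$ the situation is even looser: the largest remaining angle need only exceed $2\pi/15$, and $\sqrt{3/2}\,\cos(2\pi/15)>1$, so the ratio bound imposes no constraint whatsoever. The ``sum too small'' direction also fails, since the ratio bound only forces the other angles below roughly $0.68$ rad and $\pi/10+2\cdot 0.68>\pi/2$.

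Your fallback is the correct idea and is in fact the paper's proof, but you have not executed it, and your description of how $R^2$ enters is off. The paper fixes the index $i$, combines \eqref{eq:R_of_phi} with the leftmost inequality of \Href{Lemma}{lem:squared_lengths_best_bounds} \emph{at that same index} to get $R^2=\frac{1}{|v_i|^2\cos^2\varphi_i}\le\frac{n+1}{2}\cdot\frac{1}{\cos^2\varphi_i}=\frac{3}{\cos^2\varphi_i}$, and substitutes this into \eqref{eq:stronger_discrete_isoperimetric_inequality} and \eqref{eq:cube_area_lower_bound} to obtain
\[
\frac{3}{\cos^2\varphi_i}\left(\sin 2\varphi_i+(f-1)\sin\frac{\pi-2\varphi_i}{f-1}\right)\ \ge\ 4\sqrt{6}.
\]
The proof then rests on two facts you would still need to supply: the left-hand side is increasing in $\varphi_i$ on $[0,\pi/2]$, and at $\varphi_i=\pi/10$ it evaluates to about $8.26$ (for $f=3$) and $9.34$ (for $f=4$), both below $4\sqrt6\approx 9.80$; hence $\varphi_i\ge\pi/10$. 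Your phrase ``$R^2$ is pinned between $2(f)/(n-1)$-type bounds'' misses the essential point that the bound on $R^2$ must carry the factor $1/\cos^2\varphi_i$ for the resulting one-variable function to be monotone, which is what makes the single numerical check at $\pi/10$ sufficient.
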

\begin{proof}
Fix $i \in [f].$ 
By identity~\eqref{eq:R_of_phi} and \Href{Lemma}{lem:squared_lengths_best_bounds}, we have 
$R^2 \leq \frac{3}{2\cos^2 \varphi_i}.$ By inequality~\eqref{eq:stronger_discrete_isoperimetric_inequality}, we get
	\begin{equation*}
		\frac{3}{\cos^2\varphi_i}\left(\sin2\varphi_i+(f-1)\sin\frac{\pi-2\varphi_i}{f-1}\right)\geq 4 C_\cube (5,2) = 4 \sqrt{6}.
	\end{equation*}
The function of  $\varphi_i$ in the left-hand side of this inequality is increasing on  $[0,\pi/2]$. Since the inequality does not hold for $\varphi_i=\pi/10$, we conclude that $\varphi_i$ is necessarily at least $\pi/10$.
\end{proof}

\begin{claim}
For $n=5,$ we have that  $f =2.$
\end{claim}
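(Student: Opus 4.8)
The plan is to rule out $f=3$ and $f=4$ by feeding the first-order condition of \Href{Lemma}{lem:necessary_condition_first_order} into the angle confinement already in hand. By \Href{Claim}{claim:facet_number_estimation} we know $f\le 4$ when $n=5$, so assume $f\in\{3,4\}$; by \Href{Claim}{claim:n_5_upper_bound_phi} and \Href{Claim}{claim:n_5_lower_bound_phi} we then have $\pi/10\le\varphi_i\le\pi/4$ for every $i\in[f]$.

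First I would rewrite \Href{Lemma}{lem:necessary_condition_first_order} in the cyclic-polygon notation. For $i\in[f]$ let $d_i$ be the number of vectors of $S$ corresponding to the edge $F_i$. Since $\cube(S)=\bigcap_j\{x:\lvert\iprod{x}{v_j}\rvert\le 1\}$, every edge of $\cube(S)$ lies on some $H_{\pm v_j}$, so by \Href{Lemma}{lem:hyperplane_meets_facet} each opposite pair of edges is hit by at least one vector of $S$; as each $v_j$ corresponds to exactly one such pair, $d_i\ge 1$ and $\sum_{i=1}^f d_i=n=5$. Using $\vol{1}F_i=2R\sin\varphi_i$ together with \eqref{eq:R_of_phi}, the identity of \Href{Lemma}{lem:necessary_condition_first_order} becomes
\[
2R^4\sin 2\varphi_i\cos^2\varphi_i = d_i\,\area\cube(S),\qquad i\in[f],
\]
so, setting $g(\varphi):=\sin 2\varphi\cos^2\varphi$, we get $g(\varphi_i)/g(\varphi_j)=d_i/d_j$ for all $i,j\in[f]$.

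Next I would control the spread of $g$ on $[\pi/10,\pi/4]$. A short computation gives $g'(\varphi)=2\cos^2\varphi\,(\cos^2\varphi-3\sin^2\varphi)$, so $g$ increases on $[\pi/10,\pi/6]$ and decreases on $[\pi/6,\pi/4]$; hence on this interval $\max g=g(\pi/6)=3\sqrt{3}/8$ and $\min g=\min\{g(\pi/10),g(\pi/4)\}=g(\pi/4)=1/2$. Consequently $d_i/d_j=g(\varphi_i)/g(\varphi_j)\le 3\sqrt{3}/4<2$ for all $i,j$. But with $\sum_{i=1}^f d_i=5$, all $d_i\ge 1$, and $f\in\{3,4\}$, the multiset $\{d_i\}$ must be $\{1,1,3\}$, $\{1,2,2\}$, or $\{1,1,1,2\}$, and in each case $\max_i d_i/\min_i d_i\ge 2$ — a contradiction. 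Therefore $f=2$, and \Href{Claim}{claim:f_2} completes the proof.

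The only point requiring care is the bookkeeping for the $d_i$ (that each opposite pair of edges is genuinely hit and that the $d_i$ sum to $n$); once that is in place the argument is just the elementary observation that $g$ varies by strictly less than a factor of $2$ on $[\pi/10,\pi/4]$, which is precisely the room left by the bounds $\pi/10\le\varphi_i\le\pi/4$.
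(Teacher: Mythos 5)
Your proof is correct and follows essentially the same route as the paper: rewrite Lemma \ref{lem:necessary_condition_first_order} as $g(\varphi_i)/g(\varphi_j)=d_i/d_j$ with $g(\varphi)=\sin2\varphi\cos^2\varphi$, bound the spread of $g$ on $[\pi/10,\pi/4]$ by $3\sqrt{3}/4<2$, and contradict the integrality of the $d_i$. Your case enumeration of the partitions of $5$ is in fact slightly more careful than the paper's, which asserts the existence of indices with $d_i=2$ and $d_j=1$ (false for the distribution $\{1,1,3\}$, though the ratio argument still applies there).
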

\begin{proof}
Assume that either $f=3$ or $f=4.$ Denote by $d_i$ the number of vectors in $S$ that correspond to $F_i$. We want to rewrite the inequality of  \Href{Lemma}{lem:necessary_condition_first_order} using the circumradius and the center angle.  Since the length of  edge $F_i$ is $2R\sin\varphi_i$ and by identity~\eqref{eq:R_of_phi}, identity \eqref{eq:necessary_condition_first_order} takes the form
	$$2R^2\sin2\varphi_i
	 =\frac{d_i}{R^2\cos^2\varphi_i}\area\cube(S).$$
Set $q(\varphi)=\cos^2\varphi\sin2\varphi.$ Then for all $i,j\in[f],$ we have
\begin{equation*}
	\frac{d_i}{d_j}=\frac{q(\varphi_i)}{q(\varphi_i)}.
\end{equation*}
Clearly,  there are $i,j\in[f]$ such that $d_i=2$ and $d_j=1.$ 
Therefore, $q(\varphi_i)/q(\varphi_j)=2.$ By \Href{Claim}{claim:n_5_upper_bound_phi} and \Href{Claim}{claim:n_5_lower_bound_phi}, we have that $ \varphi_i, \varphi_j \in [\pi/10,\pi/4].$ By simple computations,  the maximum of $q$ on the segment $[\pi/10,\pi/4]$ is $q(\pi/6) = 3\sqrt{3}/8$ and the minimum is $q(\pi/4) =1/2.$ Hence $\max\limits_{\varphi,\psi\in[\pi/10,\pi/4]} q(\varphi)/q(\psi)=3\sqrt{3}/4<2$ and we come to a contradiction. Thus, $f=2$.
\end{proof}
Thus, we have proved that  for  a maximizer of~\eqref{eq:main_problem_cube}, then $f=2$. By \Href{Claim}{claim:f_2}, the conjectured upper bound for the area of a planar section holds and also is tight. The proof of \Href{Theorem}{thm:main_result} is complete.

\begin{remark}
We used the Ball inequality \eqref{eq:cube_vol_upper_bound} to prove \Href{Theorem}{thm:main_result} for   $n\in\{3,4,6\}.$  However, it can be done by using our approach without the Ball inequality. The proof is technical.  Since it is not of great interest, we do not give a proof.
\end{remark}

\appendix
\section{}\label{sec:appendix}

\begin{proof}[Sketch of the proof of \Href{Lemma}{lem:plane_extremizers_cube}]
Let $H$ be a $k$-dimensional subspace of $\R^n$ such that $C_\cube(n,k)$ is attained. Denote $P = \cube^n \cap H.$   
Since $P$ is an affine $k$-dimensional cube, there are vectors $\{a_1, \dots, a_k\}$ such that $P = \bigcap\nolimits_{i\in[k]} \left(H_{a_i}^+ \cap H_{a_i}^-\right).$

Let $\{v_1, \dots, v_n\}$ be the projection of the vectors of the standard basis onto $H.$  
By the same arguments as in \Href{Lemma}{lem:hyperplane_meets_facet}, the hyperplane $H_{v_i}$ meets the polytope $P$ in a facet of $P$ for every $i \in [n].$ 
Thus, $v_i$ 
	coincides with $\pm a_j$ for a proper sign and $j \in [k].$
	Or, equivalently, we partition $[n]$ into $k$ sets  and $H$ is the solution of a proper system of linear equations constructed as in  
\eqref{enum_card_2} and \eqref{enum_card_3},    except we have not proved that \eqref{enum_card_1} holds yet. Let us prove this assertion. 
Let $d_i$ vectors of the standard basis of $\R^n$ project onto a pair $\pm a_i.$  Therefore, a $k$-tuple of vectors 
$\{\sqrt{d_i} a_i\}_{ i \in [k]}$ is a tight frame. 
Identifying $H$ with $\R^k$ 
and by the assertion \eqref{ass:eq_cond_it3} of  \Href{Lemma}{lem:tight_frame_equiv_cond}, we conclude that 
$a_i$ and $a_j$ are orthogonal whenever $i \neq j.$ Therefore, $|a_i|^2 = \frac{1}{d_i}$ and 
\begin{equation}\label{eq:calc_C_cube}
\vol{k} \cube^n \cap H = 2^k {\sqrt{d_1 \cdot \ldots \cdot d_k}}.
\end{equation}

Suppose $d_i \geq d_j +2$ for some $i, j \in [k].$ Then 
$d_i \cdot d_j  \leq (d_i -1)(d_j+1).$ By this and by \eqref{eq:calc_C_cube}, 
we showed that \eqref{enum_card_1} holds. 

It is easy to see that   there are exactly $n - k \lfloor n/k\rfloor$ of $d_i$'s equal 
$ \lceil n/k\rceil$ and all others $k - (n - k \lfloor n/k\rfloor)$ are equal to $\lfloor n/k\rfloor.$ That is, $C_{\cube}(n,k)$ is given by \eqref{eq:optimal_constant}. This completes the proof.     

\end{proof}

\begin{proof}[Sketch of the proof of \Href{Lemma}{lem:der_det_identity}]
Recall that  the cross product 
 of $k-1$ vectors $\{x_1, \ldots, x_{k-1}\}$ of $\R^k$   is the vector $x$ defined by
$$
	\iprod{x}{y} = \det ( x_1, \ldots, x_{k-1}, y) \quad \mbox{for all}
	\quad  y \in \R^k.
$$
For an ordered $(k-1)$-tuple $L = \{i_1, \dots, i_{k-1} \} \in \binom{[n]}{k-1}$ and a frame $S =\{v_1, \dots, v_n\},$ we use $[v_L]$ to denote the cross product of $v_{i_1}, \dots, v_{i_{k-1}}.$ 

We claim the following property of the tight frames.  \\ \noindent
{\it Let $S =\{v_1, \dots, v_n\}$ be a tight frame in $\R^k$. Then the set of vectors
$\{[v_L]\}_{L \in\binom{[n]}{k-1}}$ is a  tight frame in $\R^k.$} \\ \noindent
We use $\Lambda^k (\R^n)$ to denote the space of exterior $k$-forms on $\R^n.$
By assertion \eqref{ass:eq_cond_it2} of \Href{Lemma}{lem:tight_frame_equiv_cond},
there exists an orthonormal basis  $\{f_i\}_1^n$ of $\R^n$ such that $v_i$ is the orthogonal projection of $f_i$ onto $\R^k,$ for any $i \in [n].$ 
Then the $(k-1)$-form $v_{i_1} \wedge \dots \wedge v_{i_{k-1}}$ is the orthogonal projections of the $(k-1)$-form $f_{i_1} \wedge \dots \wedge f_{i_{k-1}}$ onto $\Lambda^{k-1} (\R^k) \subset \Lambda^{k-1} (\R^n),$ for any ordered $(k-1)$-tuple $L = \{i_1, \dots, i_{k-1} \} \in \binom{[n]}{k-1}.$ By \Href{Lemma}{lem:tight_frame_equiv_cond} and since the $(k-1)$-forms $\{f_{i_1} \wedge \dots \wedge f_{i_{k-1}}\}_{\{i_1, \dots, i_{k-1} \} \in \binom{[n]}{k-1}}$ form an orthonormal basis of $\Lambda^{k-1} (\R^n),$ we have that the set of $(k-1)$-forms  $\{v_{i_1} \wedge \dots \wedge v_{i_{k-1}}\}_{\{i_1, \dots, i_{k-1} \} \in \binom{[n]}{k-1}}$ is a tight frame  in $\Lambda^{k-1} (\R^k).$ Finally, the Hodge star operator maps $v_{i_1} \wedge \dots \wedge v_{i_{k-1}}$ to the cross product of vectors $v_{i_1}, \dots, v_{i_{k-1}}.$ Since the Hodge star is an isometry, the set of  cross products $\{[v_L]\}_{L \in  \in \binom{[n]}{k-1}}$ is a tight frame. The claim is proven.

By linearity of the determinant, it is enough to prove the lemma for 
$\tilde{S}= \{v_1 + t x, v_2, \dots, v_n \}.$ 
Denote $v'_1 = v_1 + tx$ and $v'_i = v_i,$ for $ 2 \leq i \leq n.$

By the Cauchy--Binet formula, we have
\begin{equation}
\label{eq:Binet_Cauchy_A_S}
\det A_{\tilde{S}} = \det  \left( \sum\limits_1^n v'_i \otimes v'_i\right) = 
\sum\limits_{Q \in {\binom{[n]}{k}}} \det  \left( \sum\limits_{i \in Q} v'_i \otimes v'_i\right).
\end{equation}
By the properties of the Gram matrix, we have 
\[
\det  \left( \sum\limits_{1}^k v'_{i_1} \otimes v'_{i_k}\right) =  \left(\det \left( v'_{i_1}, \dots,  v'_{i_k}\right) \right)^2.
\]
By this, by the definition of cross product and by identity \eqref{eq:Binet_Cauchy_A_S},  we obtain 
$$
	\det A_{\tilde{S}} =  1 +  
		2 t \sum\limits_{Q \in {[n] \choose k}, 1 \in Q }
				 \iprod{v_1}{[v_{Q\setminus 1}]}  \iprod{[v_{Q\setminus 1}]}{ x} + o(t).
$$
Since  $\iprod{v_1}{[v_{J}]}  = 0$ for any $J \in  {[n] \choose k-1}$ such that $1 \in J,$ we have that the linear term of the Taylor expansion of $\det A_{\tilde{S}}$ equals  
\[
	2 t \sum\limits_{Q \in {[n] \choose k}, 1 \in Q }
				 \iprod{v_1}{[v_{Q\setminus 1}]} {[v_{Q\setminus 1}], x} =  2t \sum\limits_{L \in {[n] \choose k-1}} \iprod{v_i}{[v_{L}]} 
				 \iprod{[v_{L}]}{ x}.
\]
Since $\{[v_L]\}_{L \in \binom{[n]}{k-1}}$ is a  tight frame in $\R^k$, we have that
\[
 \sum\limits_{L \in {[n] \choose k-1}} \iprod{v_i}{[v_{L}]} 
 \iprod{[v_{L}]}{x} = \iprod{v_i}{x}.
\]
Therefore, 
$$
	\sqrt{\det A_{\tilde{S}}} = \sqrt{\det A_S + 2 t \iprod{v_i}{x} + o(t)} = 1 + t \iprod{v_i}{x} + o(t). 
$$
\end{proof}

\invisiblesection{Acknowledgement}
\subsection*{Acknowledgement}
The authors acknowledge the financial support from the Ministry of Education and Science of the Russian Federation in the framework of MegaGrant no 075-15-2019-1926. G.I. was supported also by the Swiss National Science Foundation grant
200021-179133 and by the Russian Foundation for Basic Research, project 18-01-00036A.

\bibliographystyle{amsalpha}
\bibliography{bibliography}{}
\end{document}